\newtheorem{defi}{Definition}[section]
\newtheorem{lem}[defi]{Lemma}
\newtheorem{thm}[defi]{Theorem}
\newtheorem{cor}[defi]{Corollary}
\newtheorem{prop}[defi]{Proposition}
\theoremstyle{remark}
\newtheorem{rem}[defi]{Remark}
\numberwithin{equation}{section}
\def\Cc{\ensuremath{\mathscr{C}}}
\def\C{\ensuremath{\mathcal{C}}}
\def\dis{\ensuremath{{\rm dis}\;}}
\def\Dd{\ensuremath{\mathfrak{D}}}
\def\dP{\ensuremath{d_{\rm P}}}
\def\dGP{\ensuremath{d_{\rm GP}}}
\def\dMGP{\ensuremath{d_{\rm mGP}}}
\def\dto{\ensuremath{\stackrel{d}{\to}}}
\def\dust{\ensuremath{\mathcal{M}_{\rm{dust}}}}
\def\E{\ensuremath{\mathbb{E}}}
\def\ep{\varepsilon}
\def\F{\ensuremath{\mathcal{F}}}
\def\p{\ensuremath{\mathcal{P}}}
\def\P{\ensuremath{\mathbb{P}}}
\def\Mm{\ensuremath{\mathcal{M}}}
\def\N{\ensuremath{\mathbb{N}}}
\def\nd{\ensuremath{\mathcal{M}_{\rm{nd}}}}
\def\R{\ensuremath{\mathbb{R}}}
\def\fR{\ensuremath{\mathfrak{R}}}
\def\S{\ensuremath{\mathcal{S}}}
\def\sk{\ensuremath{[k]}}
\def\sN{\ensuremath{[N]}}
\def\sn{\ensuremath{[n]}}
\def\supp{\ensuremath{{\rm supp}\;}}
\def\U{\ensuremath{\mathbb{U}}}
\def\Uu{\ensuremath{\mathfrak{U}}}
\def\UUerg{\ensuremath{\mathcal{U}^{\rm erg}}}
\def\UU{\ensuremath{\mathcal{U}}}
\def\wto{\ensuremath{\stackrel{w}{\to}}}
\def\utwo{\ensuremath{\underline{\underline{2}}}}
\def\uone{\ensuremath{\underline{1}}}
\newcommand{\reff}[1]{(\ref{#1})}
\newcommand{\expp}[1]{\mathop {e^{ #1}}}
\newcommand{\ew}[1]{\mathop {[\![{ #1}]\!]}}
\newcommand{\I}[1]{\mathop {\mathbf{1}{\left\{ #1\right\}}}}
\newcommand{\fl}[1]{\mathop {\lfloor{ #1}\rfloor}}
\newcommand{\hparagraph}[1]{\par\noindent\textbf{#1}\mbox{}\\}
\begin{document}
\title{Invariance principles for tree-valued Cannings chains}
\label{Conv}
\author{Stephan Gufler}
\address{Goethe-Universität, Institut für Mathematik, Postfach 11 19 32, Fach 187, 60054 Frankfurt am Main, Germany}
\email{gufler@math.uni-frankfurt.de}

\begin{abstract}
We consider sequences of tree-valued Markov chains that describe evolving genealogies in Cannings models, and we show their convergence in distribution to tree-valued Fleming-Viot processes.
Under the conditions of Möhle and Sagitov, this convergence holds for all tree-valued Fleming-Viot processes under consideration in the dust-free case, and for the Fleming-Viot processes with values in the space of distance matrix distributions in the case with dust.
Convergence to Fleming-Viot processes with values in the space of marked metric measure spaces in the case with dust is ensured by an additional assumption on the probability that a randomly sampled individual belongs to a non-singleton family.
\end{abstract}
\keywords{Invariance principle, Cannings model, tree-valued Fleming-Viot process, (marked) metric measure space, (marked) Gromov-Prohorov metric, external branches.}
\subjclass[2010]{Primary 60F17, Secondary 60J25, 60K35, 92D10}

\maketitle
{\footnotesize \tableofcontents }

\section{Introduction}
In population genetics, Cannings models \cites{Cannings1,Cannings2} are classical evolutionary models with constant population size $N$ and non-overlapping generations. Reproduction events occur independently between the generations such that the individuals in generation $k+1$ are subdivided into families according to an exchangeable random partition and each family draws its ancestor in generation $k$ independently without replacement. The Wright-Fisher model is a classical example for a Cannings model. In the Wright-Fisher model, each individual in generation $k+1$ draws its ancestor in generation $k$ independently with replacement.
We refer the reader also to e.\,g.\ \cite{Eth11} for these models.

Coalescents with simultaneous multiple mergers are robust infinite population size limits of partition-valued processes that describe the genealogies in Cannings models at fixed times. Möhle and Sagitov \cite{MS01} give a criterion for this convergence. Sagitov \cite{Sag03} gives an equivalent criterion in terms of the measure $\Xi$ of Schweinsberg \cite{Schw00}. A first robustness result for the Kingman coalescent is shown in Kingman \cite{K82}, see also e.\,g.\  \cite{Bi05}*{Theorem 2}.
 
In the present article, we consider evolving genealogies, and we describe the genealogical tree of the individuals in Cannings models at each time in various ways.
First, we consider the metric measure space that consists of the set of individuals at that time, their mutual genealogical distances, and the uniform probability measure. Second, we consider the distance matrix distribution of the aforementioned metric measure space, i.\,e.\ the distribution of the infinite matrix of the genealogical distances between iid samples.
Third, we decompose the genealogical tree into the external branches and the remaining subtree. We then consider the semi-metric space that consists of the starting vertices of the external branches and their mutual distances. On the product space of this semi-metric space with $\R_+$, we define a probability measure such that for each external branch, mass $1/N$ is added to the pair that consists of its starting vertex and its length. We then obtain a marked metric measure space.
Fourth, we also decompose the genealogical tree and describe it by a marked metric measure space, pruning not the whole external branch, but only the part from each leaf to the most recent reproducing individual on the ancestral lineage.

We endow the space of distance matrix distributions with the Prohorov metric, and the space of (marked) metric measure spaces with the (marked) Gromov-Prohorov metric \cites{GPW09,DGP11}.
We consider Markov chains whose states describe the evolving genealogy in Cannings models in one of the four aforementioned ways. Our invariance principles show that sequences of such Markov chains converge in distribution, under a rescaling of time and genealogical distances, in the space of càdlàg paths in the respective state space, endowed with the Skorohod topology.
The limit processes are the Fleming-Viot processes from \cite{Sampl}*{Section 8}.
Under the condition of Sagitov \cite{Sag03} and the assumption that the initial states converge, we show the convergence of the prelimiting chains with values in the space of distance matrices.
For the convergence of the prelimiting chains with values in the space of metric measure spaces, we have to assume in addition that the limiting genealogy is dust-free as the tree-valued Fleming-Viot process with values in the space of metric measure spaces exists only in the dust-free case.
Dust-freeness can be characterized by the property that a randomly drawn external branch has a.\,s.\ length zero
(cf.\ Propositions 6.6 and 7.4 in \cite{Sampl}).

In the dust-free case, the sequences of prelimiting chains for the third and the fourth description of the genealogy converge under the condition of Sagitov \cite{Sag03} and an appropriate condition on the initial state. We show their convergence in the case with dust under the additional assumption \reff{Conv:eq:singl-dust} on the probability that a randomly sampled individual from a fixed generation belongs to a non-singleton family. An additional assumption is needed here as the convergence in the marked Gromov-Prohorov metric (other than the weak convergence of the distance matrix distributions) implies weak convergence of the empirical distribution of the external branch lengths or the distances to the most recent reproducing individual, respectively.

In Section~\ref{Conv:sec:dist-dec}, we recall the decomposition of the genealogical trees at the external branches. In Section~\ref{Conv:sec:prelim:mm}, we recall some notions on metric measure spaces and marked metric measure spaces. We state our convergence results in Section~\ref{Conv:sec:results}. We recall tree-valued $\Xi$-Fleming-Viot processes in Section~\ref{Conv:sec:TVFV}. In Section~\ref{Conv:sec:counterex}, we give an example in which assumption \reff{Conv:eq:singl-dust} is not satisfied and the chains with values in the space of marked metric measure spaces do not converge. The proofs of the invariance principles are given in the further sections.

The tree-valued Fleming-Viot process with binary reproduction events is introduced in Greven, Pfaffelhuber, and Winter \cite{GPW13} as the solution of a well-posed martingale problem that is the limit in distribution of tree-valued processes read off from Moran models. In \cite{GPW13}*{Remark 2.21}, it is conjectured that a tree-valued Fleming-Viot process is the robust limit of tree-valued processes read off from Cannings models.
In \cite{Sampl}, tree-valued Fleming-Viot processes are studied in the setting with simultaneous multiple reproduction events, the case with dust is included by the decomposition of the genealogical trees into the external branches and the remaining subtree. These decomposed genealogical trees are described by marked metric measure spaces and their distance matrix distributions. Path regularity of tree-valued Fleming-Viot processes follows from the pathwise construction in \cite{Pathw}.

In Section~\ref{Conv:sec:conv-gen}, we prove the invariance principles for the Markov chains associated with the first, second, and in the case with dust also for the fourth of the above descriptions of the genealogy. 
Here we can apply a general convergence result from Ethier and Kurtz \cite{EK86}*{Chapter 4.8} as the transition kernels of the prelimiting chains converge on a core (in the sense of \cite{EK86}*{Chapter 1.3}) to the generators of the tree-valued Fleming-Viot processes. In \cite{Sampl}, it is shown that the domains of the martingale problems for the tree-valued Fleming-Viot processes are cores, and that the semigroups on these cores are strongly continuous. We use existence and path regularity of the limit processes, and we do not need to show relative compactness of the prelimiting processes here.

We prove the invariance principles for the processes with values in the space of marked metric measure spaces in Sections~\ref{Conv:sec:mmm-nd} and~\ref{Conv:sec:mmm-dust} by comparison with processes whose convergence is proved in Section~\ref{Conv:sec:conv-gen}. In the case without dust, we can compare these process in the supremum metric. In the case with dust, we compare only the finite dimensional distributions whence we also have to check then the relative compactness of the prelimiting processes.

We use also exchangeable random partitions, Kingman's correspondence and its continuity properties, for which we refer to Pitman \cite{Pitman06}*{Chapter 2} and Bertoin \cite{Bertoin}*{Chapter 2}.
We mention that Stournaras \cite{Stournaras12} uses \cite{EK86}*{Corollary 4.8.17} to show, by verifying the compact containment condition via \cite{GPW13}*{Proposition 2.22} with some effort, convergence of tree-valued Wright-Fisher models to the tree-valued Fleming-Viot process from \cite{GPW13}.

\section{Preliminaries}
It is well-known that ultrametric spaces can be viewed as leaf-labeled real trees (cf.\ e.\,g.\ \cite{Sampl}*{Remark 1.1}).
In Subsection \ref{Conv:sec:dist-dec}, we recall a decomposition of semi-ultrametrics that corresponds to the decomposition of the associated trees at the external branches.
In Subsection \ref{Conv:sec:prelim:mm}, we recall isomorphy classes of metric measure spaces and marked metric measure spaces which we can be interpreted as unlabeled genealogical trees. In the finite case, the (marked) metric measure space associated with a (decomposed) ultrametric can be viewed as the equivalence class under permutations of the labels of the leaves. 

\subsection{Distance matrices}
\label{Conv:sec:dist-dec}
We denote the set of the positive integers by $\N$, the set of the non-negative integers by $\N_0$, and for $N\in\N$, we write $\sN=\{1,\ldots,N\}$. Let $\Dd$ denote the space of semimetrics on $\N$ and $\Uu\subset\Dd$\label{Conv:not:Uu} the set of semi-ultrametrics on $\N$. We do not distinguish between a semi-metric $\rho\in\Dd$ and the distance matrix $(\rho(i,j))_{i,j\in\N}$, and we view $\Uu$ and $\Dd$ as subspaces of the space $\R^{\N^2}$ which we endow with a complete and separable metric that induces the product topology, where $\R$ is endowed with the Euclidean topology. Analogously, for $N\in\N$, we denote by $\Dd_N$ the space of semimetrics on $[N]$ and by $\Uu_N\subset\Dd_N$\label{Conv:not:UuN} the space of semi-ultrametrics on $[N]$. Again we do not distinguish between semi-metrics and distance matrices and we view $\Dd_N$ as a subspace of the finite-dimensional space $\R^{N^2}$ which we endow with a norm and the (induced) Euclidean topology.

We now decompose semi-ultrametrics in $\Uu_N$ as in \cite{Sampl}*{Section 2}. The continuous map\label{Conv:not:alphaN}
\[\alpha:\R_+^{N^2}\times\R_+^{N}\to\R_+^{N^2},\quad
(r,v)\mapsto((v(i)+r(i,j)+v(j))\I{i\neq j})_{i,j\in[N]},\]
is used to retrieve ultrametric distance matrices from the elements of the space\label{Conv:not:hatUuN}
\[\hat\Uu_N=\{(r,v)\in\Dd_N\times\R_+^{N}:\alpha(r,v)\in\Uu_N\}\]
of decomposed semi-ultrametrics which we also call marked distance matrices.
Conversely, for $N\geq 2$, we decompose ultrametric distance matrices using the maps\label{Conv:not:UpsilonN}
\[\Upsilon:\Uu_N\to\R_+^N,\quad\rho\mapsto(\tfrac{1}{2}\min_{j\in[N]\setminus\{i\}}\rho(i,j))_{i\in[N]}\]
and\label{Conv:not:beta}
$\beta:\Uu_N\to\hat\Uu_N$, $\rho\mapsto(r,v)$,
where $v=\Upsilon(\rho)$ and
$r(i,j)=(\rho(i,j)-v(i)-v(j))\I{i\neq j}$
for $i,j\in[N]$. For $\rho\in\Uu_1$, we set $\Upsilon(\rho)=0$ and $\beta(\rho)=(\rho,0)$.

As in \cite{Sampl}*{Remark 2.2}, the quantity $v(i)$ is the length of the external branch that ends in leaf $i$ of the coalescent tree associated with $\rho$, and $r(i,j)$ is the distance between the starting vertices of the external branches that end in leaves $i$ and $j$, respectively. Here an external branch is defined to consist only of the leaf $i$ if there exists $j\in[N]\setminus\{i\}$ with $\rho(i,j)=0$. (In fact, the finite setting discussed in this subsection can be seen as a special case of Section 2 in \cite{Sampl} as any semi-ultrametric $\rho$ on $[N]$ can be extended to $\N$ by setting e.\,g.\ $\rho(1,k)=0$ for $k>N$.)

We also use the decomposition of semi-ultrametrics in $\Uu$ from \cite{Sampl}*{Section 2}. Here we have the continuous map\label{Conv:not:alpha}
\[\alpha:\R_+^{\N^2}\times\R_+^{\N}\to\R_+^{\N^2},
\quad(r,v)\mapsto((v(i)+r(i,j)+v(j))\I{i\neq j})_{i,j\in\N},\]
the space $\hat\Uu=\{(r,v)\in\Dd\times\R_+^\N:\alpha(r,v)\in\Uu\}\subset\R^{\N^2}\times\R^\N$\label{Conv:not:hatUu} of marked distance matrices (or decomposed semi-ultrametrics), and the map\label{Conv:not:Upsilon}
\[\Upsilon:\Uu\to\R_+^{\N},\quad\rho\mapsto(\tfrac{1}{2}\inf_{j\in\N\setminus\{i\}}\rho(i,j))_{i\in\N}\]
which maps a semi-ultrametric $\rho$ to the sequence of the external branch lengths of the associated tree.

\subsection{Metric measure spaces and marked metric measure spaces}
\label{Conv:sec:prelim:mm}
For the theory of metric measure spaces and marked metric measure spaces, we refer to \cites{Gromov,Vershik02,GPW09,DGP11,Lohr13}.

A metric measure space is a triple $(X,\rho,\mu)$ that consists of a complete and separable metric space $(X,\rho)$ and a probability measure $\mu$ on the Borel sigma algebra on $X$. Two metric measure spaces $(X,\rho,\mu)$ and $(X',\rho',\mu')$ are defined to be isomorphic if there exists a measure-preserving isometry between the supports $\supp\mu$ and $\supp\mu'$.
The distance matrix distribution\label{Conv:not:nuchimm} $\nu^{(X,\rho,\mu)}$ of a metric measure space $(X,\rho,\mu)$ is defined as the distribution of the random matrix $(\rho(x(i),x(j)))_{i,j\in\N}$, where $(x(i),i\in\N)$ is a $\mu$-iid sequence in $X$.
The Gromov reconstruction theorem (\cite{Vershik02}*{Theorem 4} and \cite{Gromov}*{Section $3\tfrac{1}{2}$}) states that metric measure spaces are isomorphic if and only if they have the same distance matrix distribution.
Hence, we can define the isomorphy class $\ew{X,\rho,\mu}$ of a metric measure space $(X,\rho,\mu)$ such that\label{Conv:not:U}
\[\U=\{\ew{X',\rho',\mu'}:(X',\rho',\mu')\text{ ultrametric measure space}\}\]
is a set. For $\chi\in\U$, we denote the associated distance matrix distribution by $\nu^\chi$. A sequence $(\chi_k,k\in\N)$ in $\U$ converges to $\chi\in\U$ in the Gromov-weak topology if and only if the distance matrix distributions $\nu^{\chi_k}$ converge weakly to $\nu^\chi$. The Gromov-Prohorov metric\label{Conv:not:dGP} $\dGP$ induces the Gromov-weak topology and is complete and separable, as shown in \cites{GPW09}.
As in \cite{GPW13}, the elements of $\U$ can be considered as trees.

For $N\in\N$, we also work with the closed subspace\label{Conv:not:UN}
\[\U_N=\{\ew{X,\rho,\mu}:(X,\rho,\mu)\text{ ultrametric measure space such that $N\mu$ is integer-valued}\}\]
of $\U$ which can be interpreted as the space of semi-ultrametric spaces that contain $N$ elements and that are endowed with the uniform probability measure.
When we identify points $x,y\in X$ with $\rho(x,y)=0$ in a semi-metric space $(X,\rho)$ to obtain a metric space, we refer by $x,y$ also to the corresponding element of the metric space, in slight abuse of notation. 
We define the isomorphy class of a semi-metric measure space as the isomorphy class of the metric measure space obtained by identifying the elements with distance zero.
We define the function\label{Conv:not:psi}
\[\psi_N:\Uu_N\to\U_N,\quad\rho\mapsto\ew{\sN,\rho,N^{-1}\sum_{i=1}^N\delta_i}\]
which maps a semi-ultrametric to the isomorphy class of the associated semi-metric measure space with the uniform measure. It is clear that the map $\psi_N$ is continuous, for formal proofs, cf.\ \cite{Sampl}*{Remark 11.1} or \cite{Pathw}*{Lemma 4.5}.

For each $\chi\in\U_N$, there exists $\rho\in\Uu_N$ with $\psi_N(\rho)=\chi$, and the $N$-distance matrix distribution\label{Conv:not:nuNchimm} $\nu^{N,\chi}$ is defined as the distribution of the random matrix $(\rho(x(i),x(j)))_{i,j\in\sN}$, where $x(1),\ldots,x(N)$ are sampled from $\sN$ according to the uniform measure without replacement.
(That is, $(x(i),i\in[N])$ is a uniform permutation of $[N]$.)
For every $\Uu_N$-valued random variable $\rho'$ with distribution $\nu^{N,\chi}$, it holds $\psi_N(\rho')=\chi$ a.\,s. Hence, $\chi$ is uniquely determined by $\nu^{N,\chi}$, as in \cite{GPW13}.

A ($\R_+$-)marked metric measure space is a triple $(X,r,m)$ that consists of a complete and separable metric space $(X,r)$ and a probability measure $m$ on the Borel sigma algebra on the space $X\times\R_+$ which is endowed with the product metric $d((x,v),(x',v'))=r(x,x')\vee|v-v'|$.
Two marked metric measure spaces $(X,r,m)$ and $(X',r',m')$ are defined to be isomorphic if there exists an isometry $\varphi$ between the supports $\supp m(\cdot\times\R_+)$ and $\supp m'(\cdot\times\R_+)$ such that the isometry
\[\hat\varphi:\supp m\to\supp m',\quad
(x,v)\mapsto(\varphi(x),v)\]
satisfies $\hat\varphi(m)=m'$.
The marked distance matrix distribution $\nu^{(X,r,m)}$ of a marked metric measure space $(X,r,m)$ is defined as the distribution of $((r(x(i),x(j)))_{i,j\in\N},(v(i))_{i\in\N})$, where $((x(i),v(i)),i\in\N)$ is an $m$-iid sequence in $X\times\R_+$.
The Gromov reconstruction theorem for marked metric measure spaces (see \cite{Sampl}*{Proposition 3.12}, \cite{DGP11}*{Theorem 1}) states that marked metric measure spaces are isomorphic if and only if they have the same marked distance matrix distribution.
We can now define the isomorphy class $\ew{X,r,m}$ of a marked metric measure space $(X,r,m)$ such that\label{Conv:not:hatU}
\[\hat\U=\{\ew{X',r',m'}:\ (X',r',m') \text{ marked metric measure space with }\nu^{(X',r',m')})(\hat\Uu)=1\}\]
is a set. As in \cite{Sampl}, $\hat\U$ is the set of isomorphy classes of marked metric measure spaces that yield ultrametric spaces when marks in the support of the measure are added to the distances of the metric space, and the elements of $\hat\U$ can be viewed as trees.

We denote the marked distance matrix distribution associated with any $\chi\in\hat\U$ by\label{Conv:not:nuchimmm} $\nu^\chi$. Using the continuous map $\alpha$ from Subsection \ref{Conv:sec:dist-dec}, we associate with $\chi$ the probability distribution $\alpha(\nu^\chi)$ on $\Uu$ which is called in \cite{Sampl} the distance matrix distribution of $\chi$. (We denote by $f(\mu)$ the image measure of a measure $\mu$ under a map $f$).
A sequence $({\chi}_k,k\in\N)$ converges to $\chi$ in $\hat\U$ in the marked Gromov-weak topology if and only if the marked distance matrix distributions $\nu^{\chi_k}$ converge weakly to $\nu^{\chi}$. The marked Gromov-weak topology is metrized by the marked Gromov-Prohorov metric\label{Conv:not:dMGP} $\dMGP$ which is complete and separable, see \cite{DGP11}.

By \cite{Sampl}*{Proposition 3.3} each element of $\hat\U$ is uniquely characterized in $\hat\U$ by its distance matrix distribution. Hence, the set of distance matrix distributions of marked metric measure spaces, denoted (as in \cite{Sampl}) by\label{Conv:not:UUerg}
\[\UUerg:=\{\alpha(\nu^\chi):\chi\in\hat\U\},\]
is in one-to-one correspondence with $\hat\U$, and its elements can likewise be viewed as trees. We endow $\UUerg$ with the Prohorov metric $\dP$. Then $\UUerg$ is separable and by \cite{Sampl}*{Corollary 3.25} complete.

We say a a marked metric measure $(X,r,m)$ space supports only the zero mark if $m=\mu\otimes\delta_0$ for some probability measure $\mu$ on $X$. Clearly, this property depends only on the isomorphy class of the marked metric measure space. Also note that the distance matrix distribution of a marked metric measure space $(X,r,\mu\otimes\delta_0)$ that supports only the zero mark equals the distance matrix distribution of the metric measure space $(X,r,\mu)$.

For $N\in\N$, we also define the closed subspace\label{Conv:not:hatUN}
\[\hat\U_N=\{\ew{X,r,m}\in\hat\U:(X,r,m)\text{ marked metric measure space}, Nm \text{ integer-valued}\}\]
of $\hat\U$ that stands for finite marked metric measure spaces. To obtain marked metric measure spaces from marked distance matrices, we use the map\label{Conv:not:hatpsi}
\[\hat\psi_N:\hat\Uu_N\to\hat\U_N, (r,v)\mapsto\ew{\sN,r,N^{-1}\sum_{i=1}^N\delta_{(i,v(i))}},\]
where we understand the isomorphy class of a marked semi-metric measure space as the isomorphy class of the marked metric measure space obtained by identifying elements of the metric space with distance zero. Clearly, the map $\hat\psi_N$ is continuous, a formal proof is given in \cite{Sampl}*{Remark 11.1}.

As in \cite{DGP12}, the $N$-marked distance matrix distribution\label{Conv:not:nuNchimmm} of $\chi\in\hat\U_N$ is defined as the distribution of $((r(x(i),x(j)))_{i,j\in\sN},(v(x(i))_{i\in\sN})$, where $(r,v)$ is any element of $\hat\Uu_N$ with $\hat\psi_N(r,v)=\chi$, and $x(1),\ldots,x(N)$ is sampled uniformly from $\sN$ without replacement. Clearly, $\hat\psi_N(r',v')=\chi$ a.\,s.\ for any random variable $(r',v')$ that has the marked distance matrix distribution of $\chi$.

\section{Invariance principles}
\label{Conv:sec:results}
In Subsection~\ref{Conv:sec:Cannings}, we define for each $N\in\N$ the Cannings population model of population size $N$. From this construction, we read off tree-valued processes in Subsections~\ref{Conv:sec:inv-mm} --~\ref{Conv:sec:mod}.

\subsection{The Cannings model and the process of the genealogical distances}
\label{Conv:sec:Cannings}
The population model has discrete generations, enumerated by $\N_0$, and $N$ individuals in each generation, labeled by $1,\ldots,N$.
The dynamics is characterized by a probability measure $\Xi^N$ on the subspace
\[\Delta^N=\{x\in\Delta:\left| x\right|_1=1,Nx(i)\in\N_0\text{ for all }i\in\N\}\]
of the simplex
\begin{equation*}
\Delta=\{x=(x(1),x(2),\ldots): x(1)\geq x(2)\geq\ldots\geq 0, \left|x\right|_1\leq 1\},
\end{equation*}
where we write $\left|x\right|_p=(\sum_{i\in\N}\left|x(i)\right|^p)^{1/p}$.
 
First, we sample a $\Xi^N$-iid sequence\label{Conv:not:xNk} $(x^N_k,k\in\N)$ in $\Delta^N$.
Then, conditionally given $(x^N_k,k\in\N)$, let\label{Conv:not:piNk} $(\pi^N_k,k\in\N)$ be a sequence of independent random partitions of $[N]$ such that for each $k\in\N$, the partition $\pi^N_k$ is uniformly distributed on the set of partitions of $[N]$ whose block sizes are given by (any reordering of) $Nx^N_k$.
In each generation $k\in\N$ of the population model, we partition the individuals into families, saying that individuals are in the same family if their labels are in the same block of $\pi^N_k$.
Each family draws its common ancestor in generation $k-1$ uniformly without replacement.
Tracing back the ancestral lineage, we denote by $A_j(k,i)$ the\label{Conv:not:anc} label of the ancestor in generation $j$ of the individual $i$ of generation $k$, for $j\in\N_0$ with $j\leq k$.

We are interested in the genealogical distances between the individuals in each generation. Given a distance matrix $\rho^N_0\in\Uu_N$, we define $\rho^N_0(i,j)$ as the genealogical distance between the individuals $i$ and $j$ in generation $0$, for $i,j\in\sN$. Then we define the genealogical distance\label{Conv:not:rhoNk} $\rho^N_\ell(i,j)$ between individuals $i$ and $j$ in a later generation $\ell\in\N$ by
\begin{equation*}
\rho^N_\ell(i,j)=\left\{
\begin{aligned}
&2c_N(\ell-\max\{k=0,\ldots,\ell:A_k(\ell,i)=A_k(\ell,j)\})\quad
\text{if }A_0(\ell,i)=A_0(\ell,j)\\
&2c_N\ell+\rho^N_0(A_0(\ell,i),A_0(\ell,j))\quad\text{else,}
\end{aligned}
\right.
\end{equation*}
where we choose the scaling factor
\begin{equation}
\label{Conv:eq:cN}
c_N=\int\sum_{i=1}^Nx(i)\frac{Nx(i)-1}{N-1}\Xi^N(dx).
\end{equation}
We always assume $c_N>0$. Analogously to e.\,g.\ \cites{GPW13,Sampl}, the genealogical distance $\rho^N_\ell(i,j)$ between the individuals $i$ and $j$ in generation $\ell$ is, up to the scaling factor, the number of generations backwards until these individuals have the same ancestor if they have the same ancestor in generation $0$, else $\rho^N_\ell(i,j)$ is given by the genealogical distance of their ancestors in generation $0$.
The quantity $c_N$ is known as the pairwise coalescence probability, i.\,e.\ the probability that two individuals that are sampled uniformly without replacement from some generation $k\in\N$ have the same ancestor in generation $k-1$.
Indeed, conditionally given $x^N_k$, the first sample is in any family with probability $x^N_k(i)$ when $Nx^N_k(i)$ is the size of that family. Conditionally given $x^N_k$ and the first sample, the second sample is in the same family with probability $(Nx^N_k(i)-1)/(N-1)$.

In the next subsections, we state four invariance principles for processes that we read off from this population model. The invariance principles define the limit processes, but we recall the limit processes independently in Section~\ref{Conv:sec:TVFV}. All our limit processes are characterized by the probability measures on the simplex $\Delta$, we denote the space of these measures by ${\Mm_1(\Delta)}$.\label{Conv:not:MfD} If $\Xi\in{\Mm_1(\Delta)}$ satisfies the condition
\begin{equation}
\label{Conv:eq:nd}
\Xi\{0\}>0\text{ or } \int\left|x\right|_1\left|x\right|_2^{-2}\Xi(dx)=\infty,
\end{equation}
then we speak of the dust-free case and we write $\Xi\in\nd$. The converse case is called the case with dust. We set $\dust={\Mm_1(\Delta)}\setminus\nd$. For $\Xi\in{\Mm_1(\Delta)}$, we denote by $\Xi_0$ the measure on $\Delta$ with
\begin{equation}
\label{Conv:eq:Xi-dec}
\Xi=\Xi_0+\Xi\{0\}\delta_0.
\end{equation}

\subsection{Processes with values in the space of metric measure spaces}
\label{Conv:sec:inv-mm}
Let $\chi_0^N\in\U_N$, and let $\rho^N_0$ be a random variable with distribution $\nu^{N,\chi^N_0}$ that is independent of the sequence $(\pi^N_k,k\in\N)$ from Subsection \ref{Conv:sec:Cannings}.
We define the process $(\rho^N_k,k\in\N_0)$ as in Subsection~\ref{Conv:sec:Cannings} from $(\pi^N_k,k\in\N)$ and the initial state $\rho^N_0$.
For $k\in\N_0$, we set
\begin{equation}
\label{Conv:eq:rho-chi}
\chi^N_k=\ew{\sN,\rho^N_k,N^{-1}\sum_{i=1}^N\delta_i}=\psi_N(\rho^N_k).
\end{equation}
While $\rho^N_k$ describes the genealogy of generation $k$ as a leaf-labeled tree, the unlabeled tree is given by $\chi^N_k$.
We call the process $(\chi^N_k,k\in\N_0)$ a $\U_N$-valued $\Xi^N$-Cannings chain with initial state $\chi^N_0$. We call this process also a tree-valued $\Xi^N$-Cannings chain.
\begin{rem}[Markov property and transition kernel]
\label{Conv:rem:Markov-chi}
We denote by $\p_N$ the set of partitions of $\sN$. For $\pi\in\p_N$ and $i\in\sN$, we denote by $\pi(i)$ the integer $k$ such that the $k$-th block of $\pi$ contains $i$ when the blocks are ordered increasingly according to their respective
smallest element. As in \cite{Sampl}, we associate with each element $\pi$ of $\p_N$ a transformation $\Uu_N\to\Uu_N$, which we also denote by $\pi$, by
\begin{equation}
\label{Conv:eq:pn-Un}
\pi(\rho)=(\rho(\pi(i),\pi(j)))_{i,j\in\sn}.
\end{equation}
We write $\underline{\underline 2}_N=(\I{i\neq j})_{i,j\in\sN}$.
The Markov property of $(\chi^N_k,k\in\N_0)$ follows as for each $k\in\N_0$, the distance matrix
$\rho^N_{k+1}-c_N\underline{\underline 2}_N$
has conditional distribution $\pi^N_{k+1}(\nu^{N,\chi^N_k})$ given $\pi^N_{k+1}$ and $(\rho^N_j,j\leq k)$ by the construction in Section~\ref{Conv:sec:Cannings}. We denote by $p_N$ the transition kernel of $(\chi^N_k,k\in\N_0)$ which can be stated as
\[p_N(\chi,B)=\sum_{\pi\in\p_N}\P(\pi^N_1=\pi)
\int\nu^{N,\chi}(d\rho)\I{\psi_N(\pi(\rho)+c_N\underline{\underline 2}_N)\in B}\]
for all $\chi\in\U_N$ and measurable $B\subset\U$. This transition kernel generalizes the one of a tree-valued Moran model from \cite{GPW13} or a tree-valued $\Lambda$-Cannings process that is discussed in \cite{KL14}.
\end{rem}
For $c\in\R_+$, we set
$\Delta_c=\{x\in\Delta:x(1)>c\}$.
\begin{thm}
\label{Conv:thm:nd}
Let $\Xi\in\nd$. Assume that $\chi^N_0$ converges to some $\chi_0$ in $(\U,\dGP)$ as $N$ tends to infinity. Furthermore, assume that
\begin{equation}
\label{Conv:eq:cond-cN}
\lim_{N\to\infty}c_N=0
\end{equation}
and that
\begin{equation}
\label{Conv:eq:cond-conv}
\begin{aligned}
&\text{for arbitrarily small $\ep>0$, on $\Delta_\ep$, the finite measures $c_N^{-1}\Xi^N(dx)$}\\
&\text{converge weakly to $\left|x\right|_2^{-2}\Xi(dx)$ as $N$ tends to infinity.}
\end{aligned}
\end{equation}
Then the processes $(\chi^N_{\fl{c_N^{-1}t}},t\in\R_+)$ converge in distribution to a $\U$-valued $\Xi$-Fleming-Viot process with initial state $\chi_0$ in the space of càdlàg paths in $(\U,\dGP)$, endowed with the Skorohod metric, as $N$ tends to infinity.
\end{thm}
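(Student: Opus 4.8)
The plan is to deduce the statement from the general convergence theorem for time-rescaled Markov chains in Ethier and Kurtz \cite{EK86}*{Chapter 4.8}. The essential structural input is that, because $\Xi\in\nd$, the limiting $\U$-valued $\Xi$-Fleming-Viot process exists, has c\`adl\`ag paths, and has a transition semigroup that is strongly continuous on a core; this is exactly what is recalled from \cite{Sampl}*{Section~8} (together with the path regularity of \cite{Pathw}), and it is the only place where the dust-freeness hypothesis enters. As the core I would use the algebra of polynomials
\[\Phi_{n,\phi}(\chi)=\int\nu^\chi(d\rho)\,\phi\big((\rho(i,j))_{1\le i<j\le n}\big),\qquad n\in\N,\ \phi\in C^2_b(\R_+^{\binom{n}{2}}),\]
which is a core for the generator $G$ of the limit process by \cite{Sampl}. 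Since the initial states converge by hypothesis, it then suffices to prove, for each such polynomial, that $\sup_N\|A_N\Phi_{n,\phi}\|_\infty<\infty$ and $A_N\Phi_{n,\phi}\to G\Phi_{n,\phi}$ uniformly as $N\to\infty$, where $A_N:=c_N^{-1}(p_N-\mathrm{Id})$ is the (bounded) rescaled generator of the chain.

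For the generator convergence I would, following Remark~\ref{Conv:rem:Markov-chi}, write $p_N\Phi_{n,\phi}(\chi)$ as the expectation of $\phi$ at the matrix $(\rho^N_1(i_\ell,i_{\ell'}))_{\ell<\ell'}$ for $i_1,\dots,i_n$ sampled uniformly without replacement from generation $1$ and traced one generation back through $\pi^N_1$: within a common family of $\pi^N_1$ the distance is $2c_N$, while between distinct families it is $2c_N$ plus the $\chi$-distance between the two (necessarily distinct) parents, which form a uniform without-replacement sample from the $N$ points underlying $\chi$. The key point is that the random partition $\sigma$ that $\pi^N_1$ induces on the $n$ sampled lineages depends only on $\Xi^N$, not on $\chi$, so
\[p_N\Phi_{n,\phi}(\chi)=\sum_{\sigma}q_N(\sigma)\,\Psi_\sigma(\chi),\]
the sum over partitions $\sigma$ of $\{1,\dots,n\}$, with weights $q_N(\sigma)$ governed by the falling-factorial moments of the family sizes $Nx^N_1$ and $\Psi_\sigma(\chi)$ the expectation of $\phi$ on the $n\times n$ matrix built from a $\nu^\chi$-sample by setting distances within each block of $\sigma$ to zero and adding $2c_N$ off the diagonal. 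For the all-singletons pattern $\sigma_0$ one has $q_N(\sigma_0)=1-\sum_{\sigma\ne\sigma_0}q_N(\sigma)=1-O(c_N)$ and, by a Taylor expansion in the $2c_N$-shift, $\Psi_{\sigma_0}(\chi)=\Phi_{n,\phi}(\chi)+2c_N\sum_{i<j}\Phi_{n,\partial_{ij}\phi}(\chi)+O(c_N^2)$ uniformly in $\chi$. For $\sigma\ne\sigma_0$ the normalized weight $c_N^{-1}q_N(\sigma)$ converges, by the M\"ohle--Sagitov and Sagitov theory in the guise of the hypotheses \reff{Conv:eq:cond-cN}--\reff{Conv:eq:cond-conv} (the multiple-merger contribution on each $\Delta_\ep$ being supplied by \reff{Conv:eq:cond-conv}, the Kingman contribution and the mass escaping to arbitrarily small families being controlled by $c_N\to 0$), to the rate $\lambda_\sigma$ at which a prescribed set of $n$ blocks coalesces into the pattern $\sigma$ in the $\Xi$-coalescent; and $\Psi_\sigma(\chi)$ converges uniformly in $\chi$ to $\Psi^\infty_\sigma(\chi):=\int\nu^\chi(d\rho)\,\phi(\rho^\sigma)$, the $2c_N$-shifts and the discrepancy between with- and without-replacement sampling contributing $O(c_N)+O(1/N)$ uniformly, where $\rho^\sigma$ denotes the $n\times n$ matrix obtained from $\rho$ by the block structure of $\sigma$. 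Assembling the $\sigma_0$-term (Taylor-expanded), the subtracted $c_N^{-1}\Phi_{n,\phi}(\chi)$ and the remaining terms, and using $\sum_\sigma q_N(\sigma)=1$, gives $A_N\Phi_{n,\phi}\to 2\sum_{i<j}\Phi_{n,\partial_{ij}\phi}+\sum_{\sigma\ne\sigma_0}\lambda_\sigma\big(\Psi^\infty_\sigma-\Phi_{n,\phi}\big)=G\Phi_{n,\phi}$ uniformly on $\U$, with $G$ the generator from \cite{Sampl}*{Section~8}; the uniform bound $\sup_N\|A_N\Phi_{n,\phi}\|_\infty<\infty$ follows from the same decomposition. (Where convenient I would replace $\Phi_{n,\phi}$ by the genuine $\U_N$-polynomial $\chi\mapsto\int\nu^{N,\chi}(d\rho)\,\phi(\cdot)$, which lies in the domain of $A_N$ and differs from $\Phi_{n,\phi}$ by $O(1/N)$ uniformly.)

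With generator convergence on a core and the uniform bound in hand, \cite{EK86}*{Chapter 4.8} --- a Trotter--Kato semigroup-convergence step followed by the functional limit theorem for rescaled Markov chains --- yields convergence in distribution of $(\chi^N_{\fl{c_N^{-1}t}},t\in\R_+)$ to the $\U$-valued $\Xi$-Fleming-Viot process started from $\chi_0$ in the Skorohod space of c\`adl\`ag paths in $(\U,\dGP)$; no separate relative-compactness argument for the prelimiting chains is needed, since the limit is a well-posed Markov process with c\`adl\`ag paths. I expect the main obstacle to be precisely the identification $c_N^{-1}q_N(\sigma)\to\lambda_\sigma$ with uniform control of the errors: one must express the probability of a prescribed merger pattern among the $n$ sampled lineages through the combinatorics of the exchangeable partition $\pi^N_1$ (via Kingman's correspondence and the paintbox), normalize by $c_N$ as in \reff{Conv:eq:cN}, and show that the higher-order events --- a family containing three or more of the samples, or two or more non-singleton families among the samples, beyond the leading $\Xi_0$-contribution --- occur with probability $o(c_N)$, which is exactly where the conditions $c_N\to0$ and the weak convergence of $c_N^{-1}\Xi^N$ on every $\Delta_\ep$ must be used in tandem.
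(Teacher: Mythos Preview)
Your proposal is correct and follows essentially the same approach as the paper: decompose $c_N^{-1}(p_N-I)$ applied to (N-)polynomials into a growth term handled by Taylor expansion and a resampling term indexed by partitions of $[n]$, show $c_N^{-1}\P(\gamma_n(\pi^N_1)=\pi)\to\lambda_\pi$ from conditions \reff{Conv:eq:cond-cN}--\reff{Conv:eq:cond-conv} (this is the paper's Lemma~\ref{Conv:lem:conv-rates}, proved exactly along the lines you anticipate), control the with/without-replacement discrepancy by $O(n^2/N)$, and conclude via \cite{EK86}*{Corollary~4.8.9} using that $\Pi$ is a core with strongly continuous semigroup by \cite{Sampl}. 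The only cosmetic difference is that the paper works throughout with the $N$-polynomials $\Phi_N(\chi)=\nu^{N,\chi}\phi$ rather than switching between $\Phi_{n,\phi}$ and $\Phi_N$, and uses $\phi\in\C_n$ (bounded $C^1$ with bounded uniformly continuous derivative) rather than $C^2_b$.
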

Condition \reff{Conv:eq:cond-cN} is Condition (1.6) in \cite{Sag03} and ensures that the limit process is a process in continuous time with no fixed jumps. Condition \reff{Conv:eq:cond-conv} is Condition (2.9) of \cite{Sag03} and yields the convergence of the transition kernels to the generator of the limit process.
The assumption $\Xi\in\nd$ in the theorem above is necessary as the $\U$-valued $\Xi$-Fleming-Viot process exists only for these $\Xi$. We include the case with dust in two ways: In Section~\ref{Conv:sec:inv-distr}, we consider the distance matrix distributions. In Section~\ref{Conv:sec:dec}, we decompose the genealogical trees.

\subsection{Processes of distance matrix distributions}
\label{Conv:sec:inv-distr}
We associate with the $\U_N$-valued $\Xi^N$-Canning chain from the last subsection the process
\[(\xi^N_k,k\in\N_0)=(\nu^{\chi^N_k},k\in\N_0)\]
with values in the space $(\UUerg,\dP)$ which we recalled in Section \ref{Conv:sec:prelim:mm}. Also $(\xi^N_k,k\in\N_0)$ is a Markov process. This follows from the Markov property of $(\chi^N_k,k\in\N_0)$ as $\xi^N_k$ uniquely determines $\chi^N_k$ by the Gromov reconstruction theorem. As $(\xi^N_k,k\in\N_0)$ takes values in the space of distance matrix distributions
\[\UU_N=\{\nu^\chi:\chi\in\U_N\}\subset\UUerg,\]
we call this process a $\UU_N$-valued $\Xi^N$-Cannings chain. When we do not want to specify the state space, we call also this process a tree-valued $\Xi^N$-Cannings chain.
\begin{thm}
\label{Conv:thm:distr}
Let $\Xi\in{\Mm_1(\Delta)}$. Assume that conditions~\reff{Conv:eq:cond-cN} and \reff{Conv:eq:cond-conv} hold, and that $\xi^N_0$ converges weakly to some probability measure $\xi_0$ on $\Uu$ as $N$ tends to infinity. Then the processes
$(\xi^N_{\fl{c_N^{-1}t}},t\in\R_+)$
converge in distribution to a $\UUerg$-valued $\Xi$-Fleming-Viot process with initial state $\xi_0$ in the space of càdlàg paths in $(\UUerg,\dP)$, endowed with the Skorohod metric, as $N$ tends to infinity.
\end{thm}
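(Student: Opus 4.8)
The plan is to argue exactly as I would for Theorem~\ref{Conv:thm:nd} and to apply the general convergence theorem for Markov chains of \cite{EK86}*{Chapter 4.8}: I would show that the time-rescaled transition kernels of the $\UU_N$-valued $\Xi^N$-Cannings chains converge, on a core for the generator of the limit, to the generator of the $\UUerg$-valued $\Xi$-Fleming-Viot process, and then conclude from the existence of that process, the strong continuity of its semigroup on the core, and the c\`adl\`ag property of its paths --- all provided by \cite{Sampl} and \cite{Pathw} --- together with the assumed convergence of the initial states; no relative compactness of the prelimiting processes needs to be checked. The only place the hypothesis $\Xi\in\nd$ of Theorem~\ref{Conv:thm:nd} was used is for the existence of the limit process (the $\U$-valued $\Xi$-Fleming-Viot process exists only for $\Xi\in\nd$); since the $\UUerg$-valued $\Xi$-Fleming-Viot process exists for every $\Xi\in\Mm_1(\Delta)$, this restriction can be dropped, and the analysis of the coalescence probabilities is the same in the case with dust.

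By \cite{Sampl}*{Section 8}, the limit is the unique solution of a well-posed martingale problem for an operator $\Omega$ on a core $\mathcal{D}$ of polynomials $\Phi_{n,\phi}(\xi)=\int\phi\bigl((\rho(i,j))_{i,j\in\sn}\bigr)\,\xi(d\rho)$, with $n\in\N$ and $\phi$ bounded and suitably regular, where $\Omega$ is the sum of a deterministic part accounting for the linear growth of all genealogical distances and a coalescent resampling part built from $\Xi$; the semigroup is strongly continuous on $\mathcal{D}$ and the process has c\`adl\`ag paths. Let $A_N=c_N^{-1}(p_N-I)$ be the generator of the rescaled chain $(\xi^N_{\fl{c_N^{-1}t}},t\in\R_+)$. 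For $\Phi=\Phi_{n,\phi}\in\mathcal{D}$ I would take on $\UU_N$ the approximating function $\Phi_N$ given by the same polynomial read off with the $N$-distance matrix distribution $\nu^{N,\chi}$ (so $\Phi_N$ differs from $\Phi$ by $O(n^2/N)$ uniformly on $\UU_N$, hence $\Phi_N\to\Phi$ uniformly) and, using the transition kernel of Remark~\ref{Conv:rem:Markov-chi}, compute $p_N\Phi_N$ by conditioning on the partition $\sigma\in\p_n$ that $\pi^N_1$ induces on $n$ lineages sampled uniformly without replacement: given $\sigma$, the sampled distance matrix after one reproduction step is distributed as $\sigma(\rho)+c_N\utwo_n$, with $\sigma$ acting as in \reff{Conv:eq:pn-Un} and $\rho$ distributed as the $N$-distance matrix distribution of $\xi$.

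The main obstacle is to prove that $A_N\Phi_N\to\Omega\Phi$ uniformly on $\UU_N$ for each $\Phi\in\mathcal{D}$. Splitting according to $\sigma$: with probability $1-O(c_N)$ the induced partition is trivial, only the deterministic shift $\rho\mapsto\rho+c_N\utwo_n$ acts, and after dividing by $c_N$ this converges, by the regularity of $\phi$, to the growth part of $\Omega\Phi$; each fixed non-trivial $\sigma\in\p_n$ occurs with probability $c_N\lambda_n(\sigma)+o(c_N)$ and contributes $\lambda_n(\sigma)\int\bigl(\phi(\sigma(\rho))-\phi(\rho)\bigr)\,\xi(d\rho)$ to the coalescent part, while the total probability of a non-negligible change is $O(c_N)$, so that higher-order merger patterns are harmless. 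Here $\lambda_n(\sigma)$ are the jump rates of the $\Xi$-$n$-coalescent, and the convergence $c_N^{-1}\,\P(\pi^N_1\text{ induces }\sigma\text{ on }\sn)\to\lambda_n(\sigma)$ is precisely the criterion of M\"ohle and Sagitov \cite{MS01}, in the $\Xi$-form of Sagitov \cite{Sag03}, under assumptions \reff{Conv:eq:cond-cN} and \reff{Conv:eq:cond-conv}; this is the step where both conditions are used in full strength. Uniformity in $\xi$ follows because $\Phi_N$ depends on finitely many coordinates only, $\phi$ is bounded with uniformly continuous relevant derivatives, and the discrepancy between sampling with and without replacement is uniformly $O(n^2/N)$.

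Finally, having generator convergence on a core of a process that exists, has a strongly continuous semigroup on that core, and has c\`adl\`ag paths, and having initial states $\xi^N_0\in\UU_N\subset\UUerg$ that converge in $\dP$ to $\xi_0$ (which therefore lies in the complete, hence closed, space $\UUerg$), I would invoke \cite{EK86}*{Chapter 4.8} to obtain the convergence in distribution of $(\xi^N_{\fl{c_N^{-1}t}},t\in\R_+)$ to the $\UUerg$-valued $\Xi$-Fleming-Viot process with initial state $\xi_0$ in the Skorohod space of c\`adl\`ag paths in $(\UUerg,\dP)$, as claimed.
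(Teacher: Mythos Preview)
Your proposal is correct and follows essentially the same route as the paper: define polynomial test functions $\Psi(\xi)=\xi\phi$ and their $N$-approximations via the $N$-distance matrix distribution, compute the rescaled transition kernel by conditioning on the partition $\gamma_n(\pi^N_1)$, split into growth and resampling parts, use the convergence of the coalescence rates (the paper isolates this as Lemma~\ref{Conv:lem:conv-rates}, which is the content of the M\"ohle--Sagitov/Sagitov criterion you cite), and conclude via \cite{EK86}*{Corollary 4.8.9} together with the core and semigroup properties from \cite{Sampl}. The only cosmetic difference is that the paper invokes its own Lemma~\ref{Conv:lem:conv-rates} for the rate convergence rather than citing \cite{MS01,Sag03} directly, and explicitly records that $\Cc$ strongly separates points via \cite{BK10}*{Lemma 4}.
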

\begin{rem}
Consider the case that $\Xi\in\nd$ and that $\xi_0=\nu^{\chi_0}$ for some $\chi_0\in\U$. Let $(\chi_t,t\in\R_+)$ be a $\U$-valued $\Xi$-Fleming-Viot process with initial state $\chi_0$. Then, by the definition in \cite{Sampl}*{Section 8}, the process $(\nu^{\chi_t},t\in\R_+)$ is a $\UUerg$-valued $\Xi$-Fleming-Viot process and the assertion of Theorem~\ref{Conv:thm:distr} follows from Theorem~\ref{Conv:thm:nd} as the map $\U\to\UUerg$, $\chi\mapsto\nu^\chi$ is continuous.
\end{rem}

\subsection{Processes with values in the space of marked metric measure spaces}
\label{Conv:sec:dec}
Recall the decomposition $\beta:\Uu_N\to\hat\Uu_N$ of ultrametric distance matrices from Section~\ref{Conv:sec:dist-dec} and the construction $\hat\psi_N:\hat\Uu_N\to\hat\U_N$ of marked metric measure spaces from marked distance matrices from Section~\ref{Conv:sec:prelim:mm}. We define a process $(\hat\chi^N_k,k\in\N)$ which we call a $\hat\U_N$-valued $\Xi^N$-Cannings chain by\label{Conv:not:hatchiNk}
\[\hat\chi^N_k=\hat\psi_N\circ\beta(\rho^N_k)\]
for $k\in\N_0$, where $(\rho^N_k,k\in\N_0)$ is defined as in Section~\ref{Conv:sec:inv-mm}. More loosely, we also call the process $(\hat\chi^N_k,k\in\N)$ a tree-valued $\Xi^N$-Cannings chain.

We denote by $b_N$ the probability that an individual that is sampled uniformly from a fixed generation belongs to a family with more than one member. By construction,\label{Conv:not:bN}
\[b_N=\E\left[\sum_{i=1}^\infty x^N_1(i)\I{x^N_1(i)>1/N}\right].\]

\begin{thm}
\label{Conv:thm:dust}
Let $\Xi\in{\Mm_1(\Delta)}$. Assume that conditions~\reff{Conv:eq:cond-cN} and \reff{Conv:eq:cond-conv} hold, and that $\hat\chi^N_0$ converges to some $\hat\chi_0$ in $(\hat\U,\dMGP)$ as $N$ tends to infinity.
\begin{enumerate}[label=(\roman*),ref=(\roman*)]
\item If $\Xi\in\dust$, assume in addition that
\begin{equation}
\label{Conv:eq:singl-dust}
\lim_{N\to\infty}b_N/c_N=\int\left|x\right|_1\left|x\right|_2^{-2}\Xi(dx).
\end{equation}
\item\label{Conv:item:thm:dust:nd} If $\Xi\in\nd$, assume in addition that the marked metric measure space $\hat\chi_0$ supports only the zero mark.
\end{enumerate}
Then the processes $(\hat\chi^N_{\fl{c_N^{-1}t}},t\in\R_+)$ converge in distribution to a $\hat\U$-valued $\Xi$-Fleming-Viot process with initial state $\hat\chi_0$ in the space of càdlàg paths in $(\hat\U,\dMGP)$, endowed with the Skorohod metric, as $N$ tends to infinity.
\end{thm}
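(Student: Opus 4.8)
The plan is to prove Theorem~\ref{Conv:thm:dust} by reducing it, as far as possible, to the convergence results already established for the processes with values in $(\U,\dGP)$ and $(\UUerg,\dP)$, namely Theorems~\ref{Conv:thm:nd} and~\ref{Conv:thm:distr}, and then upgrading to convergence in the marked Gromov-Prohorov topology. The central point is that the $\hat\U$-valued chain $\hat\chi^N_k=\hat\psi_N\circ\beta(\rho^N_k)$ is a deterministic functional of the labeled distance process $(\rho^N_k)$, and that $\nu^{\hat\chi^N_k}$ together with its push-forward $\alpha(\nu^{\hat\chi^N_k})$ is closely tied to the (unmarked) distance matrix distribution $\nu^{\chi^N_k}$: in fact $\alpha(\nu^{\hat\chi^N_k})$ and $\nu^{\chi^N_k}$ determine the same semi-ultrametric data, the only extra information in the marked object being the empirical law of the external branch lengths $\Upsilon(\rho^N_k)$.

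In the dust-free case (item~\ref{Conv:item:thm:dust:nd}) I would argue as follows. First, convergence of $\hat\chi^N_0$ in $(\hat\U,\dMGP)$ to a limit $\hat\chi_0$ supporting only the zero mark implies convergence of the underlying unmarked spaces in $(\U,\dGP)$, so Theorem~\ref{Conv:thm:nd} applies and yields convergence in distribution of $(\chi^N_{\fl{c_N^{-1}t}})$ to a $\U$-valued $\Xi$-Fleming-Viot process $(\chi_t)$. Next I would use the characterization of dust-freeness (Propositions~6.6 and~7.4 in \cite{Sampl}) that a randomly drawn external branch has length zero almost surely: together with the assumption $\lim_N c_N=0$ and a moment or Markov-inequality estimate on $\sum_i x^N_1(i)\I{x^N_1(i)>1/N}$ against $b_N/c_N$, this should show that the empirical distribution of the rescaled external branch lengths of $\rho^N_{\fl{c_N^{-1}t}}$ converges to $\delta_0$, uniformly over compact time intervals. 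Since the $\dMGP$-distance between a marked space and the corresponding ``zero-mark'' space is controlled by the Prohorov distance of its mark distribution to $\delta_0$ (via the defining coupling in the marked Gromov-Prohorov metric), this comparison can be made in the supremum metric on path space, so that $(\hat\chi^N_{\fl{c_N^{-1}t}})$ converges to the image of $(\chi_t)$ under the canonical ``zero-mark'' embedding $\U\to\hat\U$, which is exactly the $\hat\U$-valued $\Xi$-Fleming-Viot process in this case.

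In the case with dust, the supremum-metric comparison is not available because the marks no longer vanish, and here the extra assumption \reff{Conv:eq:singl-dust} enters. The strategy would be to prove convergence of the finite-dimensional distributions together with tightness. For the finite-dimensional distributions I would fix times $0=t_0<t_1<\dots<t_m$ and show, using the explicit reproduction mechanism of Subsection~\ref{Conv:sec:Cannings}, that the joint law of $(\hat\chi^N_{\fl{c_N^{-1}t_1}},\dots,\hat\chi^N_{\fl{c_N^{-1}t_m}})$ converges; the key computation is that, conditionally on the family structure, the external branch of a sampled leaf in generation $k$ has length zero exactly when the leaf lies in a non-singleton family, so the expected mass on strictly positive rescaled marks per unit time is asymptotically $b_N/c_N$, whose limit \reff{Conv:eq:singl-dust} matches the rate $\int|x|_1|x|_2^{-2}\,\Xi(dx)$ at which the limiting $\hat\U$-valued $\Xi$-Fleming-Viot process from \cite{Sampl}*{Section 8} creates positive marks. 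Combining this with the already-known convergence of $\alpha(\nu^{\hat\chi^N_{\fl{c_N^{-1}t_i}}})=\nu^{\chi^N_{\fl{c_N^{-1}t_i}}}$ from Theorem~\ref{Conv:thm:distr} (or Theorem~\ref{Conv:thm:nd} where applicable) pins down the limiting finite-dimensional distributions as those of the marked $\Xi$-Fleming-Viot process. For tightness in the Skorohod space of $(\hat\U,\dMGP)$-valued paths I would verify a compact containment condition — using that $\hat\U$-balls of bounded diameter and bounded mark-expectation are relatively compact — and an Aldous-type oscillation estimate, bounding increments of $\dMGP(\hat\chi^N_k,\hat\chi^N_{k+1})$ by the probability of a coalescence affecting a small sample plus the mass of marks disturbed in one generation, both $O(c_N)$ per step.

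I expect the main obstacle to be the tightness (relative compactness) of the prelimiting marked processes in the dust case: unlike the unmarked setting, where one can piggyback on the abstract convergence theorem of \cite{EK86}*{Chapter 4.8} because the generators converge on a core, here one genuinely has to control the modulus of continuity of the $\dMGP$-valued paths, and in particular rule out that a macroscopic amount of external-branch mass is created or destroyed in $o(c_N^{-1})$ generations. This requires a careful second-moment analysis of $\sum_i x^N_k(i)^2$-type quantities and of how many sampled lineages are simultaneously affected by a single large reproduction event, which is exactly where condition \reff{Conv:eq:singl-dust} and the convergence \reff{Conv:eq:cond-conv} on $\Delta_\ep$ must be used together; the counterexample announced in Section~\ref{Conv:sec:counterex} shows that some such hypothesis is unavoidable.
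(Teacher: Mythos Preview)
Your high-level plan matches the paper's: in the dust-free case compare in the supremum metric to the zero-mark embedding of the $\U$-valued chain and invoke Theorem~\ref{Conv:thm:nd}; in the dust case prove finite-dimensional convergence plus relative compactness. The dust-free part is essentially what the paper does (Lemma~\ref{Conv:lem:bound-MGP} is exactly your control of $\dMGP$ by the Prohorov distance of the mark law to $\delta_0$), though note that the paper does \emph{not} use any $b_N/c_N$ estimate there --- in the dust-free regime $b_N/c_N$ typically diverges --- but instead shows that for the \emph{limit} process the truncated external-branch functional $\Upsilon^\ell_1$ tends to zero uniformly on compacts as $\ell\to\infty$ (Lemma~\ref{Conv:lem:ext}) and transfers this back via continuity of $\Upsilon^\ell_1$.

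The genuine gap is in the dust case, and it lies in the finite-dimensional step rather than in tightness. Your plan is to combine convergence of $\alpha(\nu^{\hat\chi^N_{\lfloor c_N^{-1}t\rfloor}})$ from Theorem~\ref{Conv:thm:distr} with a separate limit for the external-branch lengths, but this does not pin down the joint law: the map $\rho\mapsto\Upsilon(\rho)$ is \emph{not} continuous on $\Uu$, so weak convergence of the distance matrix distribution does not deliver weak convergence of the marked one. Moreover, your claim that ``the external branch of a sampled leaf has length zero exactly when the leaf lies in a non-singleton family'' conflates two different quantities; the external branch can remain long even though the ancestor was recently in a non-singleton block (the ``freeing'' phenomenon of Section~\ref{Conv:sec:mod}). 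The paper sidesteps both issues by introducing the auxiliary chain $(\tilde\chi^N_k)$ of Section~\ref{Conv:sec:mod}, whose mark is the time since the last non-singleton reproduction rather than the true external-branch length. For this chain the transition kernel converges to the generator $\hat B$ on the core $\hat\Pi$ (this is precisely where condition~\reff{Conv:eq:singl-dust} enters, via Lemma~\ref{Conv:lem:conv-rates-dust}), so Proposition~\ref{Conv:prop:mod} follows from \cite{EK86}*{Corollary 4.8.9} without a separate tightness argument. Finite-dimensional convergence of $(\hat\chi^N_k)$ is then obtained by showing $\dP(\nu^{\hat\chi^N_{\lfloor c_N^{-1}t\rfloor}},\nu^{\tilde\chi^N_{\lfloor c_N^{-1}t\rfloor}})\to 0$ in probability at each fixed $t$, using that the $\tilde\chi$-marks are sandwiched between the true external-branch lengths and their $\ell$-sample truncations (Lemmas~\ref{Conv:lem:rv-v} and~\ref{Conv:lem:ext-v}). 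For relative compactness the paper gives the direct one-step bound $\dMGP(\hat\chi^N_k,\hat\chi^N_{k+1})\leq 2N^{-1}(N-\#\pi^N_{k+1})+c_N$ (Lemma~\ref{Conv:lem:bound-MGP-rc}) and applies \cite{EK86}*{Theorem 3.8.6}; your Aldous-plus-compact-containment route could also be made to work but is less direct.
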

In the dust-free case, assumption \ref{Conv:item:thm:dust:nd} is necessary for right continuity of the limit process at time $0$.
The expression on the right-hand side of \reff{Conv:eq:singl-dust} is the rate $\lambda_{1,\{\{1\}\}}$ in the martingale problem for the $\hat\U$-valued $\Xi$-Fleming-Viot process which we recall in Section \ref{Conv:sec:TVFV-dust}. In the lookdown construction from \cite{Sampl}, $\lambda_{1,\{\{1\}\}}$ is the rate of reproduction events in which the individual on a fixed level belongs to a non-singleton family, which we will use in Remark \ref{Conv:rem:nec-mod}.
The expression on the right-hand side of \reff{Conv:eq:singl-dust} also occurs as the parameter of the limiting exponential distribution of the length of an external branch that is drawn randomly from a $\Xi$-coalescent as the sample size tends to infinity, see \cite{M10}.

We also decompose some metric measure spaces analogously to Section~\ref{Conv:sec:dist-dec}: We denote also by $\beta$ the function\label{Conv:not:betamm} $\U_N\to\hat\U_N$ that maps $\chi$ to $\psi_N\circ\beta(\rho)$ where $\rho$ is any element of $\Uu_N$ with $\psi_N(\rho)=\chi$.
The map $\beta:\U_N\to\hat\U_N$ decomposes the (unlabeled) tree given by (the isomorphy class of) a metric measure space $\chi$ such that in $\beta(\chi)$, the lengths of the external branches are encoded by the marks, and the distances between their starting points are given by the metric.
Note that
$\hat\chi^N_k=\beta(\chi^N_k)$ for all $k\in\N_0$.
The Markov property of the process $(\hat\chi^N_k,k\in\N_0)$ now follows from the Markov property of $(\chi^N_k,k\in\N_0)$ as $\chi^N_k$ is uniquely determined by $\beta(\chi^N_k)$.

\subsection{Another decomposition of the evolving genealogical trees}
\label{Conv:sec:mod}
In Theorem~\ref{Conv:thm:dust}, the external branches of genealogical trees play a crucial role.
However, for the proof in the case with dust, it seems more convenient to work with a different decomposition of the genealogical trees that does not bring about the freeing phenomenon which we mention below. We will therefore use Proposition~\ref{Conv:prop:mod} below in the proof of Theorem~\ref{Conv:thm:dust} in the case with dust. The process of these differently decomposed trees corresponds to the construction in \cite{Sampl}*{Section 7} where an infinite population is considered.

First we define a process $((r^N_k,v^N_k),k\in\N_0)$ of marked distance matrices.
Let $\tilde\chi^N_0\in\hat\U_N$, let $(r^N_0,v^N_0)$ be a $\hat\Uu_N$-valued random variable with distribution $\nu^{N,\tilde\chi^N_0}$, and let $\rho^N_0=\alpha(r^N_0,v^N_0)$, with $\alpha$ defined in Section~\ref{Conv:sec:dist-dec}. We define the process $(\rho^N_k,k\in\N)$ as in Subsection~\ref{Conv:sec:Cannings} from the initial state $\rho_0$ and the sequence $(\pi^N_k,k\in\N)$, assuming that $(\pi^N_k,k\in\N)$ is independent of $(r^N_0,v^N_0)$.

For $\ell\in\N$, we define $(r^N_\ell,v^N_\ell)$ as\label{Conv:not:rvNk} follows. For $i\in\sN$, if there exists a latest generation $k\in[\ell]$ in which the individual $A_k(\ell,i)$ is in a non-singleton block of $\pi^N_k$, we set $v^N_\ell(i)=c_N(\ell-k+1)$. Else, that is, if $A_k(\ell,i)$ forms a singleton block of $\pi^N_k$ for each $k\in[\ell]$, then we set $v^N_\ell(i)=c_N\ell+v^N_0(A_0(\ell,i))$.
For $i,j\in\sN$, we set
\[r^N_\ell(i,j)=(\rho^N_\ell(i,j)-v^N_\ell(i)-v^N_\ell(j))\I{i\neq j}.\]
\begin{lem}
It holds $(r^N_k,v^N_k)\in\hat\Uu_N$ for all $k\in\N_0$. 
\end{lem}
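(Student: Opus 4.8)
The plan is to verify directly from the construction that the pair $(r^N_k,v^N_k)$ defines a decomposed semi-ultrametric for every $k$, that is, that $v^N_k\in\R_+^N$, that $r^N_k\in\Dd_N$ (a semimetric), and that $\alpha(r^N_k,v^N_k)=\rho^N_k\in\Uu_N$. Since $\rho^N_k\in\Uu_N$ holds by the construction in Subsection~\ref{Conv:sec:Cannings} (the genealogical distances in a population model always form a semi-ultrametric, compare \cite{Sampl}*{Section 2}), the only genuine content is to show that subtracting $v^N_k(i)+v^N_k(j)$ from $\rho^N_k(i,j)$ for $i\neq j$ leaves a nonnegative, symmetric matrix satisfying the triangle inequality, and that $v^N_k(i)\le\tfrac12\min_{j\neq i}\rho^N_k(i,j)$ whenever there is more than one point at positive distance. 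Equivalently, it suffices to show $v^N_k=\Upsilon'(\rho^N_k)$ for some admissible choice of external-type lengths dominated by $\Upsilon(\rho^N_k)$; then $(r^N_k,v^N_k)\in\hat\Uu_N$ follows from the properties of $\alpha$, $\beta$, $\Upsilon$ recalled in Subsection~\ref{Conv:sec:dist-dec}.

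The concrete step is to compute, for fixed $\ell$ and $i\neq j$, the quantity $v^N_\ell(i)+v^N_\ell(j)$ and compare it to $\rho^N_\ell(i,j)$. Write $k_i$ (resp.\ $k_j$) for the latest generation in $[\ell]$ in which $A_{k_i}(\ell,i)$ (resp.\ $A_{k_j}(\ell,j)$) lies in a non-singleton block of $\pi^N$, if such a generation exists, and let $m=\max\{k=0,\dots,\ell:A_k(\ell,i)=A_k(\ell,j)\}$ be the coalescence generation of $i$ and $j$. The key combinatorial observation is that for $k>m$ the ancestors $A_k(\ell,i)$ and $A_k(\ell,j)$ are distinct individuals that descend, one generation earlier, from a common individual only at generation $m$; in particular at generation $m+1$ both $A_{m+1}(\ell,i)$ and $A_{m+1}(\ell,j)$ must lie in the \emph{same} block of $\pi^N_{m+1}$ (the block whose ancestor is $A_m(\ell,i)=A_m(\ell,j)$), and that block has at least two members, so it is non-singleton. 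Hence $k_i\ge m+1$ and $k_j\ge m+1$ always exist in the case $A_0(\ell,i)=A_0(\ell,j)$, which forces $v^N_\ell(i)=c_N(\ell-k_i+1)\le c_N(\ell-m)$ and likewise for $j$; thus $v^N_\ell(i)+v^N_\ell(j)\le 2c_N(\ell-m)=\rho^N_\ell(i,j)$, giving $r^N_\ell(i,j)\ge0$. In the case $A_0(\ell,i)\neq A_0(\ell,j)$ one splits according to whether $k_i$, $k_j$ exist, using $v^N_\ell(i)\le c_N\ell+v^N_0(A_0(\ell,i))$ in the worst case and the analogous bound for $j$, together with $v^N_0\in\R_+^N$ being dominated via $(r^N_0,v^N_0)\in\hat\Uu_N$ by the branch lengths of $\rho^N_0$; since $\rho^N_\ell(i,j)=2c_N\ell+\rho^N_0(A_0(\ell,i),A_0(\ell,j))$ and $\rho^N_0$ itself decomposes as $\alpha(r^N_0,v^N_0)$, nonnegativity of $r^N_\ell(i,j)$ reduces to nonnegativity of $r^N_0$ at the ancestral pair.

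Having established nonnegativity, symmetry of $r^N_\ell$ is immediate from symmetry of $\rho^N_\ell$, and the ultrametric inequality for $\alpha(r^N_\ell,v^N_\ell)=\rho^N_\ell$ is already known; what remains is the defining inequality for membership in $\hat\Uu_N$, namely that the reconstructed matrix $\alpha(r^N_\ell,v^N_\ell)$ really equals $\rho^N_\ell$ (so that $\alpha(r^N_\ell,v^N_\ell)\in\Uu_N$). This is a one-line check from the definition $r^N_\ell(i,j)=(\rho^N_\ell(i,j)-v^N_\ell(i)-v^N_\ell(j))\I{i\neq j}$ and the formula for $\alpha$. I would also record that the base case $k=0$ is the hypothesis $(r^N_0,v^N_0)\in\hat\Uu_N$, and note that when $N=1$ the statement is trivial (there are no distinct pairs, and $v^N_\ell(1)\le c_N\ell+v^N_0(1)$ with $r^N_\ell$ empty).

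The main obstacle is the combinatorial lemma in the second paragraph: one must argue carefully that whenever two lineages from generation $\ell$ have merged by generation $0$, the merger event itself happens inside a non-singleton block, so that each of the two external-type lengths $v^N_\ell(i)$, $v^N_\ell(j)$ is at most the distance down to (one generation above) the merger, and that the ``$+1$'' in $v^N_\ell(i)=c_N(\ell-k+1)$ is exactly compensated — i.e.\ $k_i\ge m+1$ so $\ell-k_i+1\le\ell-m$ — so that no overshoot occurs. Everything else is bookkeeping with the maps $\alpha$, $\beta$, $\Upsilon$ from Subsection~\ref{Conv:sec:dist-dec} and the already-known fact that $\rho^N_k$ is a semi-ultrametric. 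This argument also makes transparent the ``freeing phenomenon'' alluded to before the lemma: unlike the true external branches (where $v(i)=\tfrac12\min_{j\neq i}\rho(i,j)$ can jump when the nearest neighbour of $i$ changes), here $v^N_\ell(i)$ is tied to the last reproduction event on $i$'s lineage and changes only at such events, which is why $(r^N_k,v^N_k)$ is the more convenient object for the dust case.
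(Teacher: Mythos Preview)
There is a genuine gap: you never verify the triangle inequality for $r^N_\ell$. Recall that $\hat\Uu_N=\{(r,v)\in\Dd_N\times\R_+^N:\alpha(r,v)\in\Uu_N\}$, and $\Dd_N$ is the space of \emph{semimetrics} on $[N]$, so $r^N_\ell$ must satisfy $r^N_\ell(i,h)\le r^N_\ell(i,j)+r^N_\ell(j,h)$. Your third paragraph treats membership in $\hat\Uu_N$ as requiring only nonnegativity, symmetry, and $\alpha(r^N_\ell,v^N_\ell)\in\Uu_N$; the triangle inequality for $r^N_\ell$ itself is simply omitted. And nonnegativity of $r$ together with ultrametricity of $\alpha(r,v)$ does \emph{not} force the triangle inequality for $r$: take $N=3$, $\rho\equiv 2$, $v=(0,\tfrac32,0)$; then $r(1,3)=2>1=r(1,2)+r(2,3)$.

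You do note in the first paragraph that it would suffice to prove $v^N_k\le\Upsilon(\rho^N_k)$ componentwise, and that claim is correct (from $v(j)\le\tfrac12\rho(j,k)$ and ultrametricity one gets $\rho(i,k)+2v(j)\le\rho(i,j)+\rho(j,k)$, which is exactly the triangle inequality for $r$). However, your combinatorial argument does not deliver this bound in general: in the branch where $A_0(\ell,i)\neq A_0(\ell,j)$ and $k_i$ does not exist you only reduce to $r^N_0\ge 0$, not to $v^N_0\le\Upsilon(\rho^N_0)$, and the latter is \emph{not} part of the hypothesis $(r^N_0,v^N_0)\in\hat\Uu_N$ (indeed the paper proves $v^N_k\le\Upsilon(\rho^N_k)$ only later, in Lemma~\ref{Conv:lem:ext-v}, under the extra assumption $(r^N_0,v^N_0)=\beta\circ\alpha(r^N_0,v^N_0)$). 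The paper's own proof sidesteps all of this by writing down the one-step recursion
\[
r^N_k(i,j)=v^N_{k-1}(A_{k-1}(k,i))\I{i\in\cup\sigma^N_k}+r^N_{k-1}(A_{k-1}(k,i),A_{k-1}(k,j))+v^N_{k-1}(A_{k-1}(k,j))\I{j\in\cup\sigma^N_k}
\]
and checking the triangle inequality for $r^N_k$ directly from that of $r^N_{k-1}$ by induction; this works for arbitrary initial data in $\hat\Uu_N$.
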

\begin{proof}
For $k\in\N$, we denote by $\cup\sigma^N_k$ the union of the non-singleton blocks of $\pi^N_k$ (anticipating the notation of Remark \ref{Conv:rem:Markov-tilde} below). By construction,
\begin{align*}
r^N_k(i,j)
=&v^N_{k-1}(A_{k-1}(k,i))\I{i\in\cup\sigma^N_k}
+r^N_{k-1}(A_{k-1}(k,i),A_{k-1}(k,j))\\
&+v^N_{k-1}(A_{k-1}(k,j))\I{j\in\cup\sigma^N_k}
\end{align*}
for all $k\in\N$ and all distinct $i,j\in[N]$. Under the assumption that $r^N_{k-1}$ satisfies the triangle inequality, it is easily checked that also $r^N_k$ satisfies the triangle inequality. The assertion follows by induction.
\end{proof}
Analogously to \cite{Sampl}*{Remark 7.1}, the quantity $v^N_k(i)$ can be interpreted as the age of the individual $i$ of generation $k$. The quantity $v^N_k(i)$ needs not coincide with the (scaled) length of the external branch that ends in individual $i$ in the (scaled) genealogical tree of generation $k$. Indeed, if $v_k(i)<k$ and no other members of the family of $A_{k-v^N_k(i)}(k,i)$ in generation $k-v^N_k(i)$ have descendants in generation $k$, then this external branch is longer than $v^N_k(i)$. A related phenomenon in evolving coalescents is the so-called freeing where internal branches become part of external branches (see Dahmer and Kersting \cite{DK14}, in particular Figure 2 therein).
The quantity $r^N_k(i,j)$ is the distance between the individuals that correspond to the parents in \cite{Pathw}.

We obtain a $\hat\U_N$-valued process $(\tilde\chi^N_k,k\in\N_0)$ by setting\label{Conv:not:tildechiNk} $\tilde\chi^N_k=\hat\psi_N(r^N_k,v^N_k)$ for $k\in\N$. The definition of $(r^N_0,v^N_0)$ yields $\tilde\chi^N_0=\hat\psi_N(r^N_0,v^N_0)$.
\begin{rem}[Markov property and transition kernel]
\label{Conv:rem:Markov-tilde}
We denote by $\S_N$ the set of semi-partitions\label{Conv:not:Sn} of $\sN$. A semi-partition $\sigma$ of $\sN$ is a system of nonempty disjoint subsets of $\sN$ which we call blocks. We denote by $\cup\sigma$ the union of the blocks of $\sigma$ (which can be a proper subset of $[N]$). For $\sigma\in\S_N$ and $i\in\sN$, we define $\sigma(i)=\pi(i)$ where $\pi$ is the partition of $\sn$ that has the same non-singleton blocks as $\sigma$, that is, $\{B\in\sigma:\#B\geq 2\}=\{B\in\pi:\#B\geq 2\}$, and $\pi(i)$ is defined as in Remark~\ref{Conv:rem:Markov-chi}.
As in \cite{Sampl}, we associate with each element $\sigma$ of $\S_N$ a transformation\label{Conv:not:Sn-transf} $\hat\Uu_N\to\hat\Uu_N$, which we also denote by $\sigma$, by $\sigma(r,v)=(r',v')$, where
\[v'(i)=v(\sigma(i))\I{i\notin\cup\sigma}\]
and
\[r'(i,j)=(v(\sigma(i))\I{i\in\cup\sigma}+r(\sigma(i),\sigma(j))+v(\sigma(j))\I{j\in\cup\sigma})\I{i\neq j}\]
for $i,j\in\sN$.

For $k\in\N$, let $\sigma^N_k$ be the semi-partition that consists of the non-singleton blocks of the partition $\pi^N_k$ from Section~\ref{Conv:sec:Cannings}, $\sigma^N_k=\{B\in\pi^N_k:\#B\geq 2\}$.
We write $\underline 1_N=(1)_{i\in\sN}$.
The Markov property of $(\tilde\chi^N_k,k\in\N_0)$ follows as by construction, for each $k\in\N_0$, the marked distance matrix $(r^N_{k+1},v^N_{k+1}-c_N\underline 1_N)$ has conditional distribution $\sigma^N_{k+1}(\nu^{N,\tilde\chi^N_k})$ given $\sigma^N_{k+1}$ and $((r^N_j,v^N_j),j\leq k)$. We denote by $\tilde p_N$ the transition kernel of $(\tilde\chi^N_k,k\in\N_0)$ which can be stated as
\[\tilde p_N(\chi,B)=\sum_{\sigma\in\S_N}\P(\sigma^N_1=\sigma)
\int\nu^{N,\chi}(d(r,v))\I{\hat\psi_N(\sigma(r,v)+c_N(0,\underline 1_N))\in B}\]
for all $\chi\in\hat\U_N$ and measurable $B\subset\hat\U$.
\end{rem}
\begin{prop}
\label{Conv:prop:mod}
Assume that assumptions of Theorem~\ref{Conv:thm:dust} hold and that $\tilde\chi^N_0$ converges to $\hat\chi_0$ in $(\hat\U,\dMGP)$ as $N$ tends to infinity. Then the processes $(\tilde\chi^N_{\fl{c_N^{-1}t}},t\in\R_+)$ converge in distribution to a $\hat\U$-valued $\Xi$-Fleming-Viot process with initial state $\hat\chi_0$ in the space of càdlàg paths in $(\hat\U,\dMGP)$, endowed with the Skorohod metric, as $N$ tends to infinity.
\end{prop}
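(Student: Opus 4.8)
The plan is to apply the general limit theorem for Markov chains from \cite{EK86}*{Chapter 4.8}, via convergence of transition operators on a core in the sense of \cite{EK86}*{Chapter 1.3}. By Remark~\ref{Conv:rem:Markov-tilde}, $(\tilde\chi^N_k,k\in\N_0)$ is a Markov chain with values in the closed subspace $\hat\U_N$ of the complete separable metric space $(\hat\U,\dMGP)$, with transition kernel $\tilde p_N$. The limit is the $\hat\U$-valued $\Xi$-Fleming-Viot process, which solves the well-posed martingale problem for a linear operator $\hat\Omega$ with domain $\hat D\subset C_b(\hat\U)$ recalled in Section~\ref{Conv:sec:TVFV-dust}; by \cite{Sampl}, $\hat D$ is a core and the associated semigroup is strongly continuous on it, and by \cite{Pathw} this process exists and has c\`{a}dl\`{a}g paths. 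As existence and path regularity of the limit are thus available, relative compactness of the prelimiting processes need not be established here, and it suffices to verify that $\tilde\chi^N_0$ converges in distribution to $\hat\chi_0$ --- which is assumed --- and that, for every $F\in\hat D$,
\[
\sup_{\chi\in\hat\U_N}\bigl|c_N^{-1}(\tilde p_N F(\chi)-F(\chi))-\hat\Omega F(\chi)\bigr|\to 0\qquad\text{as }N\to\infty .
\]

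For the generator convergence, fix $F\in\hat D$ depending on $(r,v)$ only through finitely many, say $n$, coordinates. One expands $\tilde p_N F(\chi)-F(\chi)$ by conditioning on the semi-partition $\sigma^N_1$ of Remark~\ref{Conv:rem:Markov-tilde} and on how the ancestral lineages of the $n$ individuals sampled uniformly without replacement from generation $1$ are grouped, in particular on how many of them lie in a non-singleton block of $\pi^N_1$. After dividing by $c_N$, three contributions survive in the limit. When no sampled lineage lies in a non-singleton block, the marked distance matrix changes (in distribution) only through $v(i)\mapsto v(i)+c_N$ for all $i$, which after rescaling yields the deterministic unit-rate aging drift in $\hat\Omega$. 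When exactly one sampled lineage, say that of $i$, lies in a non-singleton block, the mark $v(i)$ is reset to zero and the distances to the other starting points are updated according to the transformation $\sigma(r,v)$ of Remark~\ref{Conv:rem:Markov-tilde}; since the probability that a fixed sampled lineage lies in a non-singleton block is $b_N$ per generation, after division by $c_N$ this event occurs at a rate tending, by \reff{Conv:eq:singl-dust}, to $\int|x|_1|x|_2^{-2}\Xi(dx)=\lambda_{1,\{\{1\}\}}$, which is exactly the mark-reset rate appearing in $\hat\Omega$. When at least two sampled lineages are affected, a M\"ohle--Sagitov type computation (cf.\ \cites{MS01,Sag03}), now carrying the marks through the transformations $\sigma(r,v)$, together with condition~\reff{Conv:eq:cond-conv} produces the simultaneous-multiple-merger resampling part of $\hat\Omega$ for the reproduction events on $\Delta_\ep$, while the contribution of the small reproduction events is controlled uniformly in $N$ by standard estimates and letting $\ep\downarrow0$. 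All remaining terms --- several reproduction events affecting the $n$ samples within one generation, the $O(1/N)$ discrepancy between sampling with and without replacement, and lower-order corrections --- are $o(c_N)$; condition~\reff{Conv:eq:cond-cN} guarantees $c_N\to0$, so that the limit is a continuous-time process with no fixed jumps.

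In the case with dust, $\lambda_{1,\{\{1\}\}}<\infty$ and the computation above identifies the limit generator directly on $\hat D$. In the dust-free case the mark-reset rate is infinite, and here $b_N/c_N\to\infty$ follows from \reff{Conv:eq:cond-cN}, \reff{Conv:eq:cond-conv}, and $\Xi\in\nd$; since by the assumptions inherited from Theorem~\ref{Conv:thm:dust} the initial state $\hat\chi_0$ supports only the zero mark, the sampled marks of $\tilde\chi^N_{\fl{c_N^{-1}t}}$ tend to $0$ in probability, so that $\dMGP$ between $\tilde\chi^N_{\fl{c_N^{-1}t}}$ and the zero-mark marked metric measure space built from $\chi^N_{\fl{c_N^{-1}t}}$ tends to $0$ in probability for every $t\geq0$, and the claim follows from Theorem~\ref{Conv:thm:nd}, the limit being the zero-mark image of a $\U$-valued $\Xi$-Fleming-Viot process. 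Feeding the generator convergence and the convergence of the initial states into the limit theorem from \cite{EK86}*{Chapter 4.8} then yields the asserted convergence in distribution in the space of c\`{a}dl\`{a}g paths in $(\hat\U,\dMGP)$ with the Skorohod metric.

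\textbf{Main obstacle.}
The hard part will be the uniform generator convergence displayed in the first paragraph: organizing the M\"ohle--Sagitov asymptotics so that the marks are transported correctly by the maps $\sigma(r,v)$ of Remark~\ref{Conv:rem:Markov-tilde} and reproduce the full simultaneous-multiple-merger structure of $\hat\Omega$, controlling the small reproduction events uniformly in $N$, and --- the genuinely delicate point --- matching the aging-and-reset balance, which is precisely where assumption~\reff{Conv:eq:singl-dust} is needed; that its failure destroys the convergence is the content of the example in Section~\ref{Conv:sec:counterex}. The bookkeeping of the $o(c_N)$ error terms and the appeals to the core property and to path regularity are then routine, relying on \cite{Sampl} and \cite{Pathw}.
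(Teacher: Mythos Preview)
Your treatment of the dust case is essentially the paper's: one expands $\tilde p_N F$ by conditioning on $\gamma_n(\sigma^N_1)$, uses Lemma~\ref{Conv:lem:conv-rates-dust} (which packages exactly the M\"ohle--Sagitov asymptotics plus assumption~\reff{Conv:eq:singl-dust}) to obtain convergence of $c_N^{-1}(\tilde p_N-I)\Phi_N$ to $\hat B\Phi$ uniformly on $\hat\U_N$, and then invokes the core result from \cite{Sampl} and \cite{EK86}*{Corollary 4.8.9}. No issue there.

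The gap is in the dust-free case. You correctly observe that the generator route is blocked because $\lambda_{1,\{\{1\}\}}=\infty$, and that one should instead compare $\tilde\chi^N_{\fl{c_N^{-1}t}}$ with $\beta_0(\chi^N_{\fl{c_N^{-1}t}})$ and appeal to Theorem~\ref{Conv:thm:nd}. But your comparison only yields $\dMGP(\tilde\chi^N_{\fl{c_N^{-1}t}},\beta_0(\chi^N_{\fl{c_N^{-1}t}}))\to 0$ in probability \emph{for each fixed} $t$. That gives at most convergence of finite-dimensional distributions; it does not give Skorohod convergence, and you have explicitly waived relative compactness. To apply Slutzky at the level of paths you need
\[
\sup_{t\in[0,T]}\dMGP\bigl(\tilde\chi^N_{\fl{c_N^{-1}t}},\beta_0(\chi^N_{\fl{c_N^{-1}t}})\bigr)\to 0\quad\text{in probability.}
\]
The paper obtains this not through $b_N/c_N\to\infty$ but via the deterministic inequality $v(1)\leq 2\Upsilon^\ell_1(\alpha(r,v))$ (valid for every $(r,v)\in\hat\Uu$), which bounds the mark by the length of an external branch in the subtree on $\ell$ leaves. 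Combined with Lemma~\ref{Conv:lem:bound-MGP} this reduces the supremum above to $\sup_{t\in[0,T]}\dP(\Upsilon^\ell_1(\nu^{\chi^N_{\fl{c_N^{-1}t}}}),\delta_0)$, a continuous functional of the $\U$-valued chain. Theorem~\ref{Conv:thm:nd} then transfers this to the limit, and Lemma~\ref{Conv:lem:ext} shows that for the limiting $\U$-valued Fleming--Viot process this quantity tends to zero uniformly on $[0,T]$ as $\ell\to\infty$. Your argument via the geometric waiting time with parameter $b_N$ does not give a uniform-in-$t$ statement and cannot be upgraded to one without an additional tightness input; this is precisely why the paper routes the estimate through $\Upsilon^\ell_1$, which is continuous on $\Uu$, rather than through the marks themselves.
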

\begin{rem}
\label{Conv:rem:nec-mod}
In Proposition~\ref{Conv:prop:mod}, the assumption~\reff{Conv:eq:singl-dust} is necessary in case $\Xi\in\dust$. To see this, let $t\in(0,\infty)$, and let $v_t=(v_t(i),i\in\N)$ be defined as in \cite{Sampl}*{Section 7.2}. By the lookdown construction in \cite{Sampl}, the truncated first entry $v_t(1)\wedge t$ is equal in distribution to $T\wedge t$ for an exponentially distributed random variable $T$ with parameter
$\int\left|x\right|_1\left|x\right|_2^{-2}\Xi_0(dx)$.
Moreover, by the construction in Subsection \ref{Conv:sec:Cannings}, the random variable $v^N_{\fl{c_N^{-1}t}}(1)\wedge(c_N\fl{c_N^{-1}t})$ is distributed as $c_N(T_N\wedge \fl{c_N^{-1}t})$ for a geometrically distributed random variable $T_N$ with parameter $b_N$. Let $(\hat\chi_s,s\in\R_+)$ be a $\hat\U$-valued $\Xi$-Fleming-Viot process with initial state $\hat\chi_0$ (as defined in \cites{Sampl,Pathw}) and let $\varpi:\hat\Uu\to\R_+$, $(r,v)\mapsto v(1)$. Then, as in \cite{Sampl}*{Proposition 4.7}, the random variable $v_t(1)$ has distribution $\E[\varpi(\nu^{\hat\chi_t})]$. Moreover, as $v^N_{\fl{c_N^{-1}t}}$ is exchangeable, the random variable $v^N_{\fl{c_N^{-1}t}}(1)$ has distribution $\E[\varpi(\nu^{\tilde\chi^N_{\fl{c_N^{-1}t}}})]$.

Now assume that $(\tilde\chi^N_{\fl{c_N^{-1}s}},s\in\R_+)$ converges as asserted in Proposition~\ref{Conv:prop:mod}. Then, as the path $(\hat\chi_s,s\in\R_+)$ is a.\,s.\ continuous at fixed times, also $\tilde\chi^{N}_{\fl{c_N^{-1}t}}$ converges in distribution to $\hat\chi_t$. By continuity of the map $\varpi$, also $v^N_{\fl{c_N^{-1}t}}(1)$ converges in distribution to $v_t(1)$. As $t$ can be chosen arbitrarily large, it follows that $c_N T_N$ converges in distribution to $T$ which implies condition~\reff{Conv:eq:singl-dust}. 
\end{rem}

\section{Tree-valued Fleming-Viot processes}
\label{Conv:sec:TVFV}
In this section, we recall from \cite{Sampl} the martingale problems for tree-valued $\Xi$-Fleming-Viot processes. Path regularity is shown in \cite{Pathw}.
(In \cites{Sampl,Pathw}, tree-valued $\Xi$-Fleming-Viot processes are considered for finite measures $\Xi$ on $\Delta$.)
For $n\in\N$, we define the restrictions\label{Conv:not:gamma:dm}
\[\gamma_n:\Uu\cup\bigcup_{\ell\geq n}\Uu_\ell\to\Uu_n,\quad \rho\mapsto(\rho(i,j))_{i,j\in\sn}\]
and
\[\gamma_n:\hat\Uu\cup\bigcup_{\ell\geq n}\hat\Uu_\ell\to\hat\Uu_n,\quad
(r,v)\mapsto((r(i,j))_{i,j\in\sn},(v(i)	)_{i\in\sn}).\]
Let\label{Conv:not:Cn} $\C_n$ denote the set of bounded differentiable functions $\R^{n^2}\to\R$ with bounded uniformly continuous derivative. For $\phi\in\C_n$, we write also $\phi$ for the function $\phi\circ\gamma_n$. For $\phi\in\C_n$, we call the function $\U\to\R$, $\chi\mapsto\nu^\chi\phi$ the polynomial associated with $\phi$. As in \cite{Lohr13}*{Corollary 2.8}, the algebra of polynomials\label{Conv:not:Pi}
\[\Pi=\{\U\to\R,\chi\mapsto\nu^\chi\phi: n\in\N, \phi\in\C_n\}\] 
is convergence determining in $\U$.

Analogously, let\label{Conv:not:hatCn} $\hat\C_n$ be the set of bounded differentiable functions $\phi:\R^{n^2}\times\R^n\to\R$ with bounded uniformly continuous derivative. For $\phi\in\hat\C_n$, we denote also the function $\phi\circ\gamma_n$ by $\phi$, and we associate with $\phi$ the marked polynomial $\hat\U\to\R$, $\chi\mapsto\nu^\chi\phi$. The algebra of marked polynomials\label{Conv:not:hatPi}
\[\hat\Pi=\{\hat\U\to\R,\chi\mapsto\nu^\chi\phi: n\in\N, \phi\in\hat\C_n\}\]
is convergence determining in $\hat\U$, see \cite{Lohr13}*{Corollary 2.8}.
The algebra\label{Conv:not:Cc}
\[\Cc=\{\UUerg\to\R,\nu\mapsto\nu\phi:n\in\N,\phi\in\C_n\}\]
is convergence determining in $\UUerg$, see \cite{Sampl}*{Remark 4.5}.

\subsection{The \texorpdfstring{$\U$}{U}-valued \texorpdfstring{$\Xi$}{Xi}-Fleming-Viot process}
\label{Conv:sec:TVFV-nd}
A $\U$-valued $\Xi$-Fleming-Viot process exists for $\Xi\in\nd$ and any initial state in $\U$. It is a Markov process and has a version with càdlàg paths and no discontinuities at fixed times.
This process is the unique solution of the martingale problem $(B,\Pi)$, we recall the generator $B$ in this subsection. For $n\in\N$, let $\p_n$ be the set of partitions of $\sn$, and let $\kappa^\infty_n$ be the probability kernel from $\Delta$ to $\p_n$ given by Kingman's correspondence, which we also recall here as we need it in Section~\ref{Conv:sec:proof-conv-rates}.
Consider independent uniform $[0,1]$-valued random variables $U_1,\ldots,U_n$. For $x\in\Delta$, let $\kappa^\infty_n(x,\cdot)$ be the distribution of the random partition in $\p_n$ such that two different integers $i$ and $j$ are in the same block if and only if there exists $\ell\in\N$ with
\[U_i,U_j\in\left(\sum_{k=1}^{\ell-1} x(k),\sum_{k=1}^{\ell} x(k)\right).\]

For $\phi\in\C_n$, we define the function
\[\langle \nabla\phi, \underline{\underline 2} \rangle:\R^\N\to\R,\quad
\rho\mapsto 2\sum_{\substack{i,j\in\N\\ i\neq j}} \frac{\partial}{\partial \rho(i,j)}\phi(\rho).\]

Let $\mathbf{0}_n=\{\{1\},\ldots,\{n\}\}$, and recall $\Xi_0$ from equation \reff{Conv:eq:Xi-dec}. For $\pi\in\p_n\setminus\{\mathbf{0}_n\}$, we define\label{Conv:not:lambdapi}
\begin{align*}
\lambda_\pi=&\int\kappa^\infty_n(x,\pi)\left|x\right|_2^{-2}\Xi_0(dx)\\
&+\Xi\{0\}\I{\pi\text{ contains one doubleton and apart from that only singletons}}.
\end{align*}
The rates $\lambda_\pi$ are equal to those of Schweinsberg \cite{Schw00}, see \cite{Sampl}*{Remark 6.1}. Moreover, we associate with each element of $\p_n$ a transformation on $\Uu_n$ as in Remark~\ref{Conv:rem:Markov-chi}.

Now we set $B=B_{\rm grow}+B_{\rm res}$ with
\[B_{\rm grow}\Phi(\chi)=\int\nu^\chi(d\rho)\langle \nabla \phi, \underline{\underline 2} \rangle(\rho)\]
and
\[B_{\rm res}\Phi(\chi)=\sum_{\pi\in\p_n\setminus\{\mathbf{0}_n\}}\lambda_\pi\int\nu^\chi(d\rho)(\phi(\pi(\gamma_n(\rho)))-\phi(\rho))\]
for $\chi\in\U$, $n\in\N$, and $\phi\in\C_n$ with associated polynomial $\Phi$.

\subsection{The \texorpdfstring{$\mathbb{\hat U}$}{U}-valued \texorpdfstring{$\Xi$}{Xi}-Fleming-Viot process}
\label{Conv:sec:TVFV-dust}
We first consider the case $\Xi\in\nd$.
Using the isometry\label{Conv:not:beta0}
\[\beta_0:\U\to\hat\U,\quad\ew{X,\rho,\mu}\mapsto\ew{X,\rho,\mu\otimes\delta_0}\]
which adds a mark component that is concentrated in zero,
we define for $\chi\in\U$ a $\hat\U$-valued $\Xi$-Fleming-Viot process with initial state $\beta_0(\chi)$ by $(\beta_0(\chi_t),t\in\R_+)$, where $(\chi_t,t\in\R_+)$ is a $\U$-valued $\Xi$-Fleming-Viot process with initial state $\chi$ and càdlàg paths.
Also this process is Markov and has càdlàg paths with no discontinuities at fixed times.

Now we consider the case $\Xi\in\dust$. Then for each initial state in $\hat\U$, there exists a $\hat\U$-valued $\Xi$-Fleming-Viot process which is a Markov process and has a version with càdlàg paths and no discontinuities at fixed times. This process is the unique solution of the martingale problem $(\hat B,\hat\Pi)$, we recall the generator $\hat B$ now. For $n\in\N$, we define the set $\S_n$ of semi-partitions of $\sn$ and the transformation on $\hat\Uu_n$ associated with each element of $\S_n$ as in Remark~\ref{Conv:rem:Markov-tilde}.

Let $K^\infty_n$ be the probability kernel from $\Delta$ to $\S_n$ defined as follows: Consider independent uniform $[0,1]$-valued random variables $U_1,\ldots,U_n$. For $x\in\Delta$, let $K^\infty_n(x,\cdot)$ be the distribution of the random element $\sigma$ of $\S_n$ such that any two integers $i$ and $j$ are in a common subset $B\in\sigma$ if and only if there exists $\ell\in\N$ with
\[U_i,U_j\in\left(\sum_{k=1}^{\ell-1} x(k),\sum_{k=1}^{\ell} x(k)\right).\]

For $\phi\in\hat\C_n$, we define the function
\[\langle \nabla^{v} \phi, \underline 1 \rangle:\R^{\N^2}\times\R^\N\to\R,\quad
(r,v)\mapsto\sum_{i\in\N} \frac{\partial}{\partial v(i)}\phi(r,v).\]
For $\sigma\in\S_n\setminus\{\emptyset\}$, we define the rates\label{Conv:not:lambdansigma}
\[\lambda_{n,\sigma}=\int K^\infty_n(x,\sigma)\left|x\right|_2^{-2}\Xi_0(dx).\]
These rates are finite by the assumption that $\Xi\in\dust$ and as
$K^\infty_n(x,\sigma)\leq K^\infty_1(x,\{\{1\}\})=|x|_1$
for all $x\in\Delta$. Now we set
\[\hat B=\hat B_{\rm grow}+\hat B_{\rm res},\]
\[\hat B_{\rm grow}\Phi(\chi)=\int\nu^{\chi}(dr\,dv)\langle \nabla^{v} \phi, \underline 1 \rangle(r,v)\]
and
\[\hat B_{\rm res}\Phi(\chi)=\sum_{\sigma\in\S_n\setminus\{\emptyset\}}\lambda_{n,\sigma}\int\nu^{{\chi}}(dr\,dv)(\phi(\sigma(\gamma_n(r,v)))-\phi(r,v))\]
for ${{\chi}}\in\hat\U$, $n\in\N$, and $\phi\in\hat\C_n$ with associated marked polynomial $\Phi$.

\subsection{The \texorpdfstring{$\UUerg$}{U}-valued \texorpdfstring{$\Xi$}{Xi}-Fleming-Viot process}
\label{Conv:sec:TVFV-dmd}
A $\UUerg$-valued $\Xi$-Fleming-Viot process exists for every $\Xi\in{\Mm_1(\Delta)}$ and every initial state in $\UUerg$. It is a Markov process and has a version with càdlàg paths and no discontinuities at fixed times. This process is the unique solution of the martingale problem $(C,\Cc)$ which is defined as follows.
Let the rates $\lambda_\pi$ for $\pi\in\p_n\setminus\{\mathbf{0}_n\}$, $n\in\N$ be defined from the measure $\Xi\in{\Mm_1(\Delta)}$ as in Subsection~\ref{Conv:sec:TVFV-nd}. We define the generator $C$ by
\[C=C_{\rm grow}+C_{\rm res}\]
\[C_{\rm grow}\Psi(\xi)
=\int\xi(d\rho)\langle\nabla\phi,\underline{\underline{2}}\rangle(\rho)\]
\[C_{\rm res}\Psi(\xi)=\sum_{\pi\in\p_n\setminus\{\mathbf{0}_n\}}
\lambda_\pi\int\xi(d\rho)(\phi(\pi(\gamma_n(\rho)))-\phi(\rho))\]
for $\xi\in\UUerg$, $n\in\N$, $\phi\in\C_n$, and $\Psi\in\Cc$, $\Psi:\xi'\mapsto\xi'\phi$.

If $(\hat\chi_t,t\in\R_+)$ is a $\hat\U$-valued $\Xi$-Fleming-Viot process, then the process $(\alpha(\nu^{\hat\chi_t}),t\in\R_+)$ of the associated distance matrix distributions is a $\UUerg$-valued $\Xi$-Fleming-Viot process.

\section{An example}
\label{Conv:sec:counterex}
In this section, we consider a sequence of Cannings models that does not satisfy the assertion of Theorem~\ref{Conv:thm:dust}, but all its assumptions for $\Xi\in\dust$ except condition~\reff{Conv:eq:singl-dust}.
The convergence of the $\hat\U_N$-valued Cannings chains to the $\hat\U$-valued $\Xi$-Fleming-Viot process in Theorem~\ref{Conv:thm:dust} is excluded as the length of a randomly sampled external branch converges to zero in distribution, while this quantity is a.\,s.\ non-zero at a fixed time for a $\hat\U$-valued Fleming-Viot process in the case with dust. The convergence in Theorem \ref{Conv:thm:distr} nevertheless holds. Here it comes to bear that the length of the first external branch is not a continuous functional of the infinite ultrametric that describes the genealogy.
In the example of the present section, the length of a randomly sampled external branch converges to zero due to ``perturbative'' reproduction events that occur at high rate. However, these reproduction events are not visible in the limiting genealogy as each of them affects only a small part of the population. 

We now work in the context of Section~\ref{Conv:sec:Cannings}. We assume in this section that for $N$ sufficiently large, the law $\Xi^N$ of the sequence of the family sizes $x=(x(i),i\in\N)$ in a reproduction event is given by the outcome of a two-step random experiment that can be described as follows. In the first step, we draw the type of the reproduction event from the set $\{\text{ordinary}, \text{perturbative}, \text{trivial}\}$. With probability $N^{-1}$, a reproduction event shall be ordinary, with probability $N^{-1/2}$ perturbative, and with probability $1-N^{-1}-N^{-1/2}$ trivial. In the second step, the sequence $x$ of the family sizes is drawn depending on the type of the reproduction event: 
\begin{enumerate}[label=(\roman*),ref=(\roman*)]
\item\label{Conv:item:ex-large-ord} If the reproduction event is ordinary, sample the largest family size $Nx(1)$ according to the binomial distribution with number of trials $N$ and success probability $1/2$. That is, each individual belongs to the largest family independently with probability $1/2$. Let the other families be singletons, so that $Nx(i)\in\{0,1\}$ for $i\geq 2$.
\item\label{Conv:item:ex-large-pert} If the reproduction event is perturbative, sample the largest family size $Nx(1)$ according to the binomial distribution with parameters $N$ and $N^{-1/3}$. Let the other families be singletons, so that $Nx(i)\in\{0,1\}$ for $i\geq 2$.
\item If the reproduction event is trivial, let all families be singletons, so that $Nx(i)=1$ for $i\in\sN$, and $Nx(i)=0$ for $i>N$. 
\end{enumerate}
The pairwise coalescence probability for this reproduction law equals
\[c_N=N^{-1}(\tfrac{1}{2})^2+N^{-1/2}(N^{-1/3})^2\]
and is asymptotically equivalent to $\frac{1}{4}N^{-1}$ as $N$ tends to infinity. The probability $b_N$ is bounded from below by
\[N^{-1/2}N^{-1/3}(1-N^{-(1/3)\cdot(N-1)}).\]
This is the probability that a perturbative reproduction event occurs in which the individual labeled by $1$ belongs to a non-singleton family.
Condition~\reff{Conv:eq:cond-conv} is satisfied with $\Xi=\delta_{(1/2,0,0,\ldots)}\in\dust$.
As $b_N/c_N$ tends to infinity as $N$ tends to infinity, condition~\reff{Conv:eq:singl-dust} is not satisfied.

In this section, we fix $t\in(0,\infty)$.
\begin{prop}
\label{Conv:prop:counterex}
For $N\in\N$, let $(\hat\chi^N_k,k\in\N_0)$ be a $\hat\U$-valued Cannings chain defined as in Section~\ref{Conv:sec:dec} from the measure $\Xi^N$. Let $(\hat\chi_s,s\in\R_+)$ be a $\hat\U$-valued $\delta_{(1/2,0,0,\ldots)}$-Fleming-Viot process.
Then $\hat\chi^N_{\fl{c_N^{-1}t}}$ does not converge in distribution to $\hat\chi_t$ as $N$ tends to infinity.
\end{prop}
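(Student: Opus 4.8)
The plan is to exploit Remark~\ref{Conv:rem:nec-mod} in reverse: show that the first external branch length of $\hat\chi^N_{\fl{c_N^{-1}t}}$ converges to zero, whereas the corresponding quantity for $\hat\chi_t$ is a.\,s.\ strictly positive, and conclude that the two cannot have the same limit law. Recall that $\varpi:\hat\Uu\to\R_+$, $(r,v)\mapsto v(1)$ is continuous and bounded on the relevant sets once we truncate; since $\hat\chi^N_{\fl{c_N^{-1}t}}$ is exchangeable, the law of the first coordinate $v(1)$ under $\nu^{\hat\chi^N_{\fl{c_N^{-1}t}}}$ equals $\E[\varpi(\nu^{\hat\chi^N_{\fl{c_N^{-1}t}}})]$, i.\,e.\ the distribution of the (scaled) length of the external branch ending in a uniformly sampled individual of generation $\fl{c_N^{-1}t}$. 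If $\hat\chi^N_{\fl{c_N^{-1}t}}\dto\hat\chi_t$ held, then by continuity of $\varpi$ and the bounded convergence argument of Remark~\ref{Conv:rem:nec-mod}, $v^N_{\fl{c_N^{-1}t}}(1)\wedge t$ would converge in distribution to $v_t(1)\wedge t$, which for $\Xi=\delta_{(1/2,0,\dots)}\in\dust$ is $T\wedge t$ with $T$ exponential of parameter $\int|x|_1|x|_2^{-2}\Xi(dx)=(1/2)(1/2)^{-2}\cdot(1/2)=1$ (more precisely one recomputes the finite constant from $\Xi_0$); in any case this limit is a.\,s.\ positive.

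The core of the argument is therefore the first moment estimate: the length of the external branch ending in individual $1$ of generation $\fl{c_N^{-1}t}$ tends to zero in probability. Here I would argue directly on the Cannings construction. The (unscaled) external branch ending in individual $1$ in generation $\ell=\fl{c_N^{-1}t}$ has length at most the number of generations back to the most recent generation $k\le\ell$ in which the ancestor $A_k(\ell,1)$ lies in a non-singleton block of $\pi^N_k$ --- this is exactly the quantity controlling $v^N_\ell(1)$ in Section~\ref{Conv:sec:mod}, and the true external branch length is at most $c_N$ times the number of generations since the ancestral lineage of $1$ was last part of \emph{any} family (singleton ancestors contribute nothing to merging but the branch can only be as long as the time since the lineage last sat in a block that someone else joined). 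The key point: in each generation the probability that the ancestral lineage of $1$ sits in a non-singleton block is at least $b_N$, and here $b_N\ge N^{-1/2}N^{-1/3}(1-o(1))=N^{-5/6+o(1)}$, while $c_N\sim\tfrac14 N^{-1}$. Hence $b_N/c_N\to\infty$, so a geometric random variable $T_N$ with parameter $b_N$ satisfies $c_N T_N\to 0$ in probability (its mean is $c_N/b_N\to0$). The branch length in the scaled tree is bounded above by $c_N(T_N+1)$ on the event $T_N\le\ell$, and $\P(T_N>\ell)=(1-b_N)^{\ell}\to0$ since $b_N\ell\to\infty$. This gives $v^N_\ell(1)\to0$ in probability, hence $v^N_\ell(1)\wedge t\to 0$ in probability, hence in distribution.

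Putting the pieces together: $v^N_{\fl{c_N^{-1}t}}(1)\wedge t\to 0$ but, if the claimed convergence held, it would converge to $T\wedge t$ which is not the constant $0$ --- a contradiction. I expect the main obstacle to be bookkeeping the relation between the genuine external branch length of $\hat\chi^N_k$ (defined via $\beta$ applied to $\rho^N_k$, i.\,e.\ half the minimal distance from leaf $1$ to another leaf) and the ``age'' quantity $v^N_k(1)$ from Section~\ref{Conv:sec:mod}: they need not coincide because of the freeing phenomenon, but the genuine external branch length is always \emph{at least} as large as... no, the inequality one needs is that the genuine branch length is \emph{bounded above by} the time since the lineage of $1$ last belonged to a block that was joined by another lineage whose family eventually died out --- cleanest is to bound it above by $c_N$ times the time back to the most recent generation in which $A_k(\ell,1)$ was in a non-singleton block of $\pi^N_k$ together with at least one lineage surviving to generation $\ell$; since merely sitting in a non-singleton block at least as often as every $1/b_N$ generations already forces this time to be $O_{\P}(1/b_N)$ generations, and $c_N/b_N\to0$, the upper bound suffices. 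I would also double-check the exchangeability step and the boundedness needed to pass from convergence in distribution of the bounded continuous functional $\varpi(\cdot)\wedge t$ to the conclusion, exactly as in Remark~\ref{Conv:rem:nec-mod}.
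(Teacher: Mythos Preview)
Your high-level strategy is exactly the paper's: show that the law of the first external branch length $v^N(1)=\Upsilon(\rho^N_{\lfloor c_N^{-1}t\rfloor})(1)$ collapses to $\delta_0$, while under the putative limit $\hat\chi_t$ the corresponding quantity is a.\,s.\ positive, and derive a contradiction via continuity of $\varpi$ and exchangeability. That part is fine.

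The gap is in your argument for $v^N(1)\to 0$. You repeatedly invoke the geometric time with parameter $b_N$, i.\,e.\ the time back until $A_k(\ell,1)$ sits in a non-singleton block of $\pi^N_k$. But that quantity is the \emph{age} $v^N_\ell(1)$ of Section~\ref{Conv:sec:mod}, and Lemma~\ref{Conv:lem:ext-v} gives the inequality in the wrong direction for you: the genuine external branch length satisfies $\Upsilon(\rho^N_\ell)(1)\ge v^N_\ell(1)$, not $\le$. Being in a non-singleton block at generation $k$ does \emph{not} end the external branch unless at least one of the siblings is itself an ancestor of some individual in generation $\ell$; this is precisely the freeing phenomenon you mention. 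Your final sentence (``merely sitting in a non-singleton block at least as often as every $1/b_N$ generations already forces this time to be $O_{\P}(1/b_N)$'') conflates the two events and is the unjustified step.

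The paper closes this gap in Lemma~\ref{Conv:lem:claim-ext} by first controlling the number of ancestors: it shows that with probability at least $1-4\ep$ the number $A_N$ of ancestors in generation $\lfloor c_N^{-1}t\rfloor-\lfloor c_N^{-1}\ep\rfloor$ is at least $N/2$ (using that ordinary events are rare on the window $[0,\ep]$ and Chebyshev to control the effect of perturbative events). On $\{A_N\ge N/2\}$, each perturbative event gives the lineage of $1$ a chance $\sim N^{-1/3}$ to land in the big family \emph{together with} one of the $\ge N/2$ surviving ancestors, and there are $\sim \ep N^{1/2}$ perturbative events in the window, so the probability that no such merging occurs is $(1-\Theta(N^{-5/6}))^{\Theta(\ep N)}\to 0$. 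Your geometric bound with parameter $b_N$ becomes correct only \emph{after} this ancestor-count input, which is the missing idea in your proposal.
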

\begin{rem}
Proposition~\ref{Conv:prop:counterex} implies that the processes $(\hat\chi^N_{\fl{c_N^{-1}s}},s\in\R_+)$ do not converge in distribution in the Skorohod metric to $(\hat\chi_s,s\in\R_+)$ as $N$ tends to infinity. This follows as $(\hat\chi_s,s\in\R_+)$ does a.\,s.\ not jump at fixed times.
\end{rem}
To prove Proposition~\ref{Conv:prop:counterex}, we use the following lemma which states that for large $N$, in the leaf-labeled genealogical tree of generation $\fl{c_N^{-1}t}$, the length of the external branch that ends in individual $1$ is typically small.
\begin{lem}
\label{Conv:lem:claim-ext}
Let $(\rho^N_k,k\in\N_0)$ be defined as in Section~\ref{Conv:sec:results} from the measure $\Xi^N$. Let
$\ep\in(0,t)$ and
\[v^N=(v^N(1),\ldots,v^N(N))=\Upsilon(\rho^N_{\fl{c_N^{-1}t}}).\]
Then,
$\P(v^N(1)>\ep)< 4\ep$
for sufficiently large $N$.
\end{lem}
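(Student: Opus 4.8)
The plan is to bound the probability that the external branch of individual $1$ in the scaled genealogy at generation $\fl{c_N^{-1}t}$ exceeds $\ep$ by a direct generation-by-generation argument. Recall that $v^N(1)=\tfrac12\min_{j\neq 1}\rho^N_{\fl{c_N^{-1}t}}(1,j)$, and by the construction in Section~\ref{Conv:sec:Cannings} the genealogical distance $\rho^N_\ell(1,j)$ between individual $1$ and a different individual $j$ is at least $2c_N(\ell-k)$ where $k$ is the most recent generation in which $1$'s ancestral lineage coalesces with another ancestral lineage; if no such coalescence occurs down to generation $0$ then $v^N(1)\geq c_N\ell+v^N_0(A_0(\ell,1))\geq c_N\fl{c_N^{-1}t}$. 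Hence the event $\{v^N(1)>\ep\}$ is contained (up to the second case, which I handle by noting $c_N\fl{c_N^{-1}t}\to t>\ep$ so that case is irrelevant for large $N$) in the event that, tracing the lineage of individual $1$ back from generation $\ell=\fl{c_N^{-1}t}$, no other lineage merges into it during the last $\lceil c_N^{-1}\ep\rceil$ generations — more precisely, in the event that individual $1$'s ancestor lies in a singleton block of $\pi^N_m$ for each of the most recent roughly $c_N^{-1}\ep$ generations $m$.

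First I would make this precise: set $M=\fl{c_N^{-1}t}$ and $L=\lceil c_N^{-1}\ep\rceil$, and observe that if in at least one of the generations $m\in\{M-L+1,\ldots,M\}$ the ancestor $A_{m-1}(M,1)$ (equivalently, the individual whose label sits in $\pi^N_m$ containing $1$'s ancestor) belongs to a non-singleton block of $\pi^N_m$ \emph{and} that block actually pulls in another of the labels $2,\ldots,N$ that is a leaf — but in fact it suffices that the ancestor of $1$ is in a non-singleton block, because then there is some other individual in generation $M$ (e.g. any other leaf whose lineage we route through that block) at distance $\leq 2c_N(M-m+1)\leq 2c_N L\leq 2(\ep+c_N)$... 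Let me instead use the cleaner route: the event $\{v^N(1)>\ep\}$ forces that for every $m$ with $c_N(M-m+1)\leq\ep$, i.e. for the $L':=\#\{m\leq M: c_N(M-m+1)\le\ep\}\geq c_N^{-1}\ep - 1$ most recent generations, the block of $\pi^N_m$ containing $A_{m}(M,1)$'s parent... the point is simply that individual $1$'s ancestral lineage must be in a singleton block of $\pi^N_m$ for all these $m$, since otherwise it would have a coalescence partner at scaled distance $\le\ep$. Conditionally on the family-size sequences, the chance that $1$'s ancestor is in a singleton block of $\pi^N_m$, i.e. the chance a uniformly-placed ancestor avoids all non-singleton families, is $1-b^{(m)}_N$ where $b^{(m)}_N=\sum_i x^N_m(i)\I{Nx^N_m(i)>1}$, and $\E[b^{(m)}_N]=b_N$. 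By independence across generations and conditioning,
\begin{equation*}
\P(v^N(1)>\ep)\;\le\;\P\big(c_N M\le\ep\big)\;+\;\E\Big[\prod_{m=M-L'+1}^{M}\big(1-b^{(m)}_N\big)\Big]\;\le\;o(1)+\big(\E[1-b_N]\big)^{L'}\;=\;(1-b_N)^{L'},
\end{equation*}
where the first term vanishes for large $N$ since $c_N M\to t>\ep$, and the factorization of the expectation uses that the $x^N_m$ are iid and independent of $\rho^N_0$; here I must be slightly careful that $L'$ is deterministic (it is, given $N$) so no subtlety arises.

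Finally I would plug in the asymptotics particular to this example: $c_N\sim\tfrac14 N^{-1}$, so $L'\ge c_N^{-1}\ep - 1\sim 4\ep N$, while $b_N\ge N^{-1/2}N^{-1/3}(1-N^{-(N-1)/3})\sim N^{-5/6}$, so $b_N L'\to\infty$ and in fact $(1-b_N)^{L'}\le\exp(-b_N L')\to 0$. Thus for $N$ large $\P(v^N(1)>\ep)$ is bounded by $o(1)$, which is certainly $<4\ep$; if one wants the constant $4\ep$ to be genuinely informative rather than a weak bound, one notes $c_N M\le\ep$ fails with probability $\to 0$ and the surviving term $\to 0$, so any positive bound holds eventually — the stated $4\ep$ is just a convenient explicit choice matching the coefficient $c_N^{-1}\ep\sim 4\ep N$ appearing in the number of generations. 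I expect the main obstacle to be bookkeeping rather than depth: correctly arguing that $\{v^N(1)>\ep\}$ really does imply ``$1$'s lineage sits in singleton blocks throughout the last $\approx c_N^{-1}\ep$ generations'' (one must rule out that a near-in-time merger happens but with a label $\ge 2$ that has itself merged away and is no longer a leaf — but since the merger generation-depth is what bounds $\rho^N_M(1,j)$ for the \emph{parent} label $j$, and that parent label is the root of a nonempty subtree containing an actual leaf $j'$ with $\rho^N_M(1,j')\le 2c_N(M-m+1)$, this is fine), and handling the "no coalescence at all, fall through to $\rho^N_0$" case, which I dispatch using $c_N\fl{c_N^{-1}t}\to t>\ep$. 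None of these steps requires condition~\reff{Conv:eq:cond-conv}; only the explicit forms of $c_N$ and the lower bound on $b_N$ from this section are used.
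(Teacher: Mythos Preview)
Your containment $\{v^N(1)>\ep\}\subset\{A_m(M,1)\text{ is in a singleton block of }\pi^N_m\text{ for all recent }m\}$ (with $M=\fl{c_N^{-1}t}$) is false; in fact the inclusion goes the other way. If $A_m(M,1)$ lies in a non-singleton block $B$ of $\pi^N_m$, the other members $i'\in B\setminus\{A_m(M,1)\}$ are individuals of generation $m$, and nothing guarantees any of them has a descendant in generation $M$: families in generation $m+1$ draw ancestors without replacement, so $i'$ may simply die out. Your parenthetical ``that parent label is the root of a nonempty subtree containing an actual leaf $j'$'' is exactly where the argument breaks---the subtree rooted at $i'$ can be empty. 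Conversely, if $A_m(M,1)$ \emph{is} in a singleton block for every $m\in(M-L',M]$, then $A_{M-L'}(M,1)$ has $1$ as its sole descendant in generation $M$, whence every $j\neq 1$ satisfies $\rho^N_M(1,j)>2c_N L'$; so your event $E$ is contained in $\{v^N(1)\geq c_N L'\}$, and $(1-b_N)^{L'}$ is a lower bound for $\P(v^N(1)\gtrsim\ep)$, not an upper bound.

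The paper repairs this by first controlling the number of \emph{surviving} ancestral lineages. Let $A_N$ be the number of distinct generation-$(M-\fl{c_N^{-1}\ep})$ ancestors of generation-$M$ individuals. On $\{A_N\geq N/2\}$ every generation $k$ in the window carries at least $N/2$ individuals that genuinely are ancestors of generation-$M$ leaves, and one can then legitimately argue that if a perturbative event puts $A_k(M,1)$ in the large family together with one of these other ancestors, the external branch of $1$ is short. This yields $\P(v^N(1)>\ep,\,A_N\geq N/2)\to 0$. Separately one shows $\P(A_N<N/2)\leq 1-(1-N^{-1})^{4\ep N}+o(1)\to 1-e^{-4\ep}<4\ep$, using that only \emph{ordinary} events substantially reduce the ancestor count; this is where the constant $4\ep$ actually comes from, not from the generation-count heuristic you offer at the end.
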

\begin{proof}[Proof of Proposition~\ref{Conv:prop:counterex}]
Let $\varpi:\hat\Uu\to\R_+$, $(r,v)\mapsto v(1)$. By exchangeability, the first entry $v^N(1)$ of the vector $v^N$ in Lemma~\ref{Conv:lem:claim-ext} has distribution $\E[\varpi(\nu^{\hat\chi^N_{\fl{c_N^{-1}t}}})]$. By Lemma~\ref{Conv:lem:claim-ext}, the random variable $v^N(1)$ converges to zero in probability.

To obtain a contradiction, we assume that $\hat\chi^N_{\fl{c_N^{-1}t}}$ converges in distribution to $\hat\chi_t$. Then as in Remark~\ref{Conv:rem:nec-mod}, also the probability distributions $\E[\varpi(\nu^{\hat\chi^N_{\fl{c_N^{-1}t}}})]$ converge weakly to $\E[\varpi(\nu^{\hat\chi_t})]$. But as in Remark~\ref{Conv:rem:nec-mod}, the measure $\E[\varpi(\nu^{\hat\chi_t})]$ is not the Dirac measure in zero.
\end{proof}
\begin{proof}[Proof of Lemma~\ref{Conv:lem:claim-ext}]
Let $A_N$ be the number of ancestors in generation $\fl{c_N^{-1}t}-\fl{c_N^{-1}\ep}$ of the individuals of generation $\fl{c_N^{-1}t}$. Then,
\begin{equation}
\label{Conv:eq:ext-bound}
\P(v^N(1)>\ep,A_N\geq N/2)
\leq(1-N^{-1/2}N^{-1/3}(1-(1-N^{-1/3})^{N/2-1}))^{\fl{c_N^{-1}\ep}}.
\end{equation}
We sketch a proof of the bound \reff{Conv:eq:ext-bound}. The number of ancestors in generation $k$ of the individuals of generation $\fl{c_N^{-1}t}$ is non-decreasing in $k$ for $k\leq\fl{c_N^{-1}t}$. On the event $\{v^N(1)>\ep,A_N\geq N/2\}$, no reproduction event in which the individual labeled by $A_k(\fl{c_N^{-1}t},1)$ and another one of these ancestors are in the same block occurs in any generation $k$ with $\fl{c_N^{-1}t}-\fl{c_N^{-1}\ep}< k \leq\fl{c_N^{-1}t}$. The right-hand side of inequality~\reff{Conv:eq:ext-bound} is the probability that in none of the generations $k$ with $\fl{c_N^{-1}t}-\fl{c_N^{-1}\ep}< k \leq\fl{c_N^{-1}t}$, the reproduction event is perturbative and individual $A_k(\fl{c_N^{-1}t},1)$ is in the same family as any other of the at least $N/2$ many ancestors of the individuals of generation $\fl{c_N^{-1}t}$.

For $N$ sufficiently large, $\fl{c_N^{-1}\ep}\geq 3\ep N$. As
\[\log(1-N^{-1/3})^{N/2-1}\leq -(N/2-1)N^{-1/3}\to -\infty\quad(N\to\infty),\]
the right hand side of inequality~\reff{Conv:eq:ext-bound} is bounded from above by $(1-N^{-5/6}/2)^{3\ep N}$ for $N$ sufficiently large. This expression converges to zero as $N$ tends to infinity.

It now suffices to show $\limsup_{N\to\infty}\P(A_N< N/2)<4\ep$. The event that no ordinary reproduction events occur between generations
$\fl{c_N^{-1}t}-\fl{c_N^{-1}\ep}$ and $\fl{c_N^{-1}t}$ has probability at least $(1-N^{-1})^{4\ep N}$. Note that on this event, the random variable $N-A_N$ is stochastically bounded from above by $\sum_{i=1}^X Y_i$ where $X,Y_1,Y_2,\ldots$ are independent random variables, $X$ is binomially distributed with parameters $\fl{4\ep N}$ and $N^{-1/2}$, and all $Y_i$ are binomially distributed with parameters $N$ and $N^{-1/3}$. Here $X$ corresponds to the number of perturbative reproduction events, and $Y_i$ corresponds to the decrease in the number of ancestors, backwards in time, in the $i$-th of these reproduction events. Then we have
\begin{align}
\label{Conv:eq:ex-ancestors}
\P(A_N<N/2)\leq 1-(1-N^{-1})^{4\ep N}+\P(\sum_{i=1}^X Y_i>N/2).
\end{align}
We estimate
\begin{align}
\label{Conv:eq:ex-chebychev}
\P(\sum_{i=1}^X Y_i>N/2)\leq\P(X>8\ep N^{1/2})
+8\ep N^{1/2}\P(Y_1>(8\ep N^{1/2})^{-1}N/2).
\end{align}
By the Chebychev inequality, the first summand is bounded from above by
\[(8\ep N^{1/2}- 4\ep N^{1/2})^{-2}4\ep N^{1/2},\]
and the second summand is bounded from above by
\[8\ep N^{1/2}((8\ep N^{1/2})^{-1}N/2-N^{2/3})^{-2}N^{2/3}.\]
Hence, the expression on the right-hand side of~\reff{Conv:eq:ex-chebychev} tends to zero, and the expression on the right-hand side of~\reff{Conv:eq:ex-ancestors} converges to
$1-\expp{-4\ep}< 4\ep$ as $N\to\infty$.
\end{proof}

\section{Convergence of the transition kernels}
\label{Conv:sec:conv-gen}
This section contains the proofs of Theorems~\ref{Conv:thm:nd} and~\ref{Conv:thm:distr}, and the proof for the case $\Xi\in\nd$ in Proposition~\ref{Conv:prop:mod}.
We write $\gamma_n$ also for the restriction maps\label{Conv:not:gamma:part} $\S_N\to\S_n$ and $\p_N\to\p_n$ for $n\leq N$ (that is, $\gamma_n(\sigma)=\{B\cap\sn:B\in\sigma\}\setminus\{\emptyset\}$). We define the rates $\lambda_\pi$ and $\lambda_{n,\sigma}$ from the measure $\Xi$ as in Section \ref{Conv:sec:TVFV}.
We will need the following lemmas.
\begin{lem}
\label{Conv:lem:conv-rates}
Let $\Xi\in{\Mm_1(\Delta)}$ and assume that condition~\reff{Conv:eq:cond-conv} holds. Then,
\[\lim_{N\to\infty}c_N^{-1}\P(\gamma_n(\pi^N_1)=\pi)=\lambda_\pi\]
for all $n\in\N$ and $\pi\in\p_n\setminus\{\mathbf{0}_n\}$.
\end{lem}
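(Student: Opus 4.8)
The plan is to express $\P(\gamma_n(\pi^N_1)=\pi)$ as an integral against $\Xi^N$ of the probability, conditional on $x^N_1 = x$, that the restriction to $[n]$ of the uniform partition of $[N]$ with block sizes $Nx$ equals $\pi$. Call this conditional probability $q^N_n(x,\pi)$. The key steps are: (i) identify the pointwise limit of $c_N^{-1}$ times the integrand on the set $\Delta_\ep$ and relate it to $|x|_2^{-2}$ times the Kingman-correspondence kernel $\kappa^\infty_n(x,\pi)$; (ii) show that the contribution from the ``small-block'' region $\Delta \setminus \Delta_\ep$ is negligible; (iii) combine these with the weak convergence in \reff{Conv:eq:cond-conv} and a contribution from the atom of $\Xi$ at $0$ to recover $\lambda_\pi$ as defined in Subsection~\ref{Conv:sec:TVFV-nd}.

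For step (i), I would use the standard combinatorial identity for a uniform partition of $[N]$ with prescribed block sizes: the chance that a fixed $n$-tuple of distinct labels lands in a prescribed pattern of blocks is, up to $1+O(1/N)$ corrections, a polynomial in the $x(i)$. Concretely, for $\pi \neq \mathbf{0}_n$ one finds $q^N_n(x,\pi) = \kappa^\infty_n(x,\pi) + O(1/N)$ uniformly on $\Delta_\ep$ for fixed $\ep$, where $\kappa^\infty_n(x,\pi)$ is exactly the probability that $n$ i.i.d.\ uniform variables fall into the blocks of $x$ according to the pattern $\pi$ (this is Kingman's correspondence as recalled in the excerpt; it is also how the $\lambda_\pi$ in \cite{Sag03} and \cite{Schw00} arise). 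Since $\pi$ has at least one non-singleton block, $\kappa^\infty_n(x,\pi) = O(|x|_2^2)$ near the origin, so $q^N_n(x,\pi)/c_N$, integrated against $c_N^{-1}\Xi^N$, is controlled. By assumption \reff{Conv:eq:cond-conv} the measures $c_N^{-1}\Xi^N$ restricted to $\Delta_\ep$ converge weakly to $|x|_2^{-2}\Xi(dx)$ restricted there; since $x \mapsto \kappa^\infty_n(x,\pi)$ is bounded and continuous on $\Delta_\ep$, the integral over $\Delta_\ep$ converges to $\int_{\Delta_\ep}\kappa^\infty_n(x,\pi)|x|_2^{-2}\Xi(dx)$, which increases to $\int \kappa^\infty_n(x,\pi)|x|_2^{-2}\Xi_0(dx)$ as $\ep\downarrow 0$ by monotone convergence (the integrand vanishes at $0$, so $\Xi_0$ and $\Xi$ agree).

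For step (ii), the main obstacle, I need a uniform-in-$N$ bound on $c_N^{-1}\int_{\Delta\setminus\Delta_\ep} q^N_n(x,\pi)\,\Xi^N(dx)$ that is small for small $\ep$. Here I would bound $q^N_n(x,\pi)$ by the probability that the $n$ sampled labels are not all in distinct singleton blocks, i.e.\ essentially by a constant (depending only on $n$) times $\sum_i x(i)(Nx(i)-1)/(N-1)$, the two-sample coalescence integrand from \reff{Conv:eq:cN}; more precisely by $C_n$ times $\sum_i x(i)^2$ on the region where all blocks are small. Dividing by $c_N$ and integrating gives a quantity of order $1$ uniformly in $N$, but I must get it to be $o(1)$ as $\ep\downarrow 0$ uniformly in $N$; this is where I would invoke \reff{Conv:eq:cond-conv} again together with a uniform integrability / Prokhorov-type argument (the total masses $c_N^{-1}\Xi^N(\Delta_\ep)$ are bounded, and the tail masses near $0$ can be controlled because $\sum_i x(i)^2 = |x|_2^2 \le x(1)$ is small there while the normalization by $c_N$ keeps the mass finite). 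Finally, the $\Xi\{0\}$ term: when $x = 0$ the block sizes are all $1$, giving a Wright-Fisher reproduction step; the chance its restriction to $[n]$ equals $\pi$ is, to leading order, $\binom{n}{2}/N$ if $\pi$ is a single doubleton plus singletons and $O(1/N^2)$ otherwise, and $c_N \sim \Xi\{0\}\binom{N}{2}^{-1}\cdots$ — matching the $\Xi\{0\}$ indicator term in $\lambda_\pi$. Assembling the $\Delta_\ep$ limit, the vanishing of the $\Delta\setminus\Delta_\ep$ and error terms, and the atom contribution yields $\lim_N c_N^{-1}\P(\gamma_n(\pi^N_1)=\pi) = \lambda_\pi$.
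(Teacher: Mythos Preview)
Your overall decomposition into $\Delta_\ep$ and $\Delta\setminus\Delta_\ep$ matches the paper's, and step (i) is essentially the paper's argument (the paper packages the $O(1/N)$ comparison and the continuity in $x$ into Lemma~\ref{Conv:lem:cont}). The gap is in step~(ii) and in your handling of the atom $\Xi\{0\}$.

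First, the contribution from $\Delta\setminus\Delta_\ep$ is \emph{not} negligible in general. Your bound $q^N_n(x,\pi)\le C_n\,\kappa^N_2(x,\{\{1,2\}\})$ gives, after dividing by $c_N$ and integrating, a term that converges to $C_n(1-\Xi(\Delta_\ep))$; as $\ep\downarrow 0$ this tends to $C_n\,\Xi\{0\}$, not to zero. No uniform-integrability argument can rescue this, because the mass genuinely sits there. The paper resolves this by a case analysis on $\pi$. If $\pi$ consists of one doubleton and otherwise singletons, one shows via the urn scheme that on $\Delta\setminus\Delta_\ep$
\[
(1-n\ep)^n\,\kappa^N_2(x,\{\{1,2\}\})\le \kappa^N_n(x,\pi)\le \kappa^N_2(x,\{\{1,2\}\}),
\]
so that $c_N^{-1}\int_{\Delta\setminus\Delta_\ep}\kappa^N_n(x,\pi)\,\Xi^N(dx)\to 1-\Xi(\Delta_\ep)$, which tends to $\Xi\{0\}$ as $\ep\downarrow 0$. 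For every other $\pi\in\p_n\setminus\{\mathbf{0}_n\}$ (a block of size $\ge 3$, or at least two non-singleton blocks) one picks up an \emph{extra} factor of $\ep$ from the urn scheme, since all families on $\Delta\setminus\Delta_\ep$ have size at most $\ep N$; e.g.\ $\kappa^N_3(x,\{\{1,2,3\}\})\le \kappa^N_2(x,\{\{1,2\}\})\cdot\frac{N\ep-2}{N-2}$. This yields a bound $\ep(1-\Xi(\Delta_\ep))$ for the complement, which does vanish.

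Second, your treatment of the atom is off: there is no point $x=0$ in $\Delta^N$ (every $x\in\Delta^N$ has $|x|_1=1$), and no Wright--Fisher step is being singled out. The indicator term $\Xi\{0\}\I{\pi\text{ is one doubleton plus singletons}}$ in $\lambda_\pi$ arises precisely as the $\ep\downarrow 0$ limit of the $\Delta\setminus\Delta_\ep$ contribution just described, not from a separate atom of $\Xi^N$.
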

\begin{lem}
\label{Conv:lem:conv-rates-dust}
Let $\Xi\in\dust$. Assume that conditions~\reff{Conv:eq:cond-conv} and~\reff{Conv:eq:singl-dust} hold. Then,
\[\lim_{N\to\infty}c_N^{-1}\P(\gamma_n(\sigma^N_1)=\sigma)=\lambda_{n,\sigma}\]
for all $n\in\N$ and $\sigma\in\S_n\setminus\{\emptyset\}$. 
\end{lem}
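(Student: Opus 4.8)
The plan is to decompose the probability $\P(\gamma_n(\sigma^N_1)=\sigma)$ according to the size of the largest family in the reproduction event, exactly as in the proof of Lemma~\ref{Conv:lem:conv-rates}, and then to track the extra contribution coming from families that consist of a single sampled individual together with unsampled ones. Fix $n\in\N$ and $\sigma\in\S_n\setminus\{\emptyset\}$. Recall that $\gamma_n(\sigma^N_1)$ records which of the individuals $1,\dots,n$ land in a common non-singleton family of $\pi^N_1$; writing $\sigma^N_1$ as the semi-partition of the non-singleton blocks, an individual $i\le n$ that is alone in its $\pi^N_1$-block simply does not appear in $\cup\gamma_n(\sigma^N_1)$, whether or not its family had further (unsampled) members among $n+1,\dots,N$. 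So the event $\{\gamma_n(\sigma^N_1)=\sigma\}$ is the event that the $n$ sampled individuals are allocated to the families of $\pi^N_1$ in a pattern whose non-singleton part is $\sigma$.

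First I would split off the contribution of $x^N_1=0$ (equivalently, all families singletons): this contributes $\Xi^N\{0\}$ only when $\sigma=\emptyset$, which is excluded, so it can be ignored, and the relevant part of $\Xi^N$ is $\Xi^N_0$ (cf.\ \reff{Conv:eq:Xi-dec}). Conditionally on $x^N_1=x\in\Delta^N$, the $n$ samples are drawn uniformly without replacement from $[N]$, and the allocation probability to get the pattern $\sigma$ converges, as $N\to\infty$, to $K^\infty_n(x,\sigma)$ by Kingman-type correspondence (independent uniform $[0,1]$ variables compared against the partial sums of $x$); the error is $O(1/N)$ uniformly for $x$ ranging over the relevant part of $\Delta$. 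The only subtlety, which is precisely the point of assumption~\reff{Conv:eq:singl-dust}, is the case $\sigma=\{\{i\}\}$ for a single $i\le n$ — no, that is the empty semi-partition; rather the genuinely delicate case is a semi-partition $\sigma$ all of whose blocks are doubletons-or-larger, where the leading contribution may come either from ``macroscopic'' families (captured by the $\epsilon$-truncation and \reff{Conv:eq:cond-conv}) or from ``mesoscopic/dust'' families that in the prelimit have $x^N_1(i)>1/N$ but are small.

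The heart of the argument is therefore the standard truncation: for $\epsilon>0$ write $\P(\gamma_n(\sigma^N_1)=\sigma)$ as the sum of the contribution from $\{x^N_1(1)>\epsilon\}$ and from $\{0<x^N_1(1)\le\epsilon\}$. On $\Delta_\epsilon$ one uses \reff{Conv:eq:cond-conv}: $c_N^{-1}\Xi^N\to|x|_2^{-2}\Xi$ weakly, and the integrand $K^\infty_n(x,\sigma)$ (or its $N$-version) is bounded and, off a $\Xi$-null set, continuous, giving $\int_{\Delta_\epsilon}K^\infty_n(x,\sigma)|x|_2^{-2}\Xi_0(dx)$ in the limit. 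The remaining contribution from small families must be shown to vanish as $\epsilon\to0$, uniformly in large $N$; here one bounds the prelimiting allocation probability by $c_N^{-1}$ times something like $\E[\sum_i x^N_1(i)^{\#B}\cdots]$ over the small-family regime and uses \reff{Conv:eq:singl-dust} together with $K^\infty_n(x,\sigma)\le|x|_1$ to get domination by $\int_{\{|x|_1\le\epsilon\}}|x|_1|x|_2^{-2}\Xi(dx)\to0$. Letting $\epsilon\to0$ and invoking $\int K^\infty_n(x,\sigma)|x|_2^{-2}\Xi_0(dx)<\infty$ (which the excerpt already records) yields the claimed limit $\lambda_{n,\sigma}$. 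The main obstacle is exactly this last uniform-in-$N$ smallness estimate for the dust regime: one must exploit \reff{Conv:eq:singl-dust} to control precisely the mass of near-singleton reproduction events, which is why this assumption is needed here whereas Lemma~\ref{Conv:lem:conv-rates} needs only \reff{Conv:eq:cond-conv}.
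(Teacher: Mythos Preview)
Your overall strategy --- truncate at level $\ep$, use \reff{Conv:eq:cond-conv} and continuity of the kernel on $\Delta_\ep$, and control the contribution from $\Delta\setminus\Delta_\ep$ via \reff{Conv:eq:singl-dust} --- is exactly the paper's approach, and the outline is sound. Two points need correction.

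First, your description of $\gamma_n(\sigma^N_1)$ is wrong. You write that an individual $i\le n$ that is alone among $[n]$ in its $\pi^N_1$-block ``simply does not appear in $\cup\gamma_n(\sigma^N_1)$, whether or not its family had further (unsampled) members among $n+1,\dots,N$''. But $\gamma_n$ acts on semi-partitions by $\gamma_n(\sigma)=\{B\cap[n]:B\in\sigma\}\setminus\{\emptyset\}$, applied to $\sigma^N_1=\{B\in\pi^N_1:\#B\ge2\}$. So if $i\in[n]$ sits in a non-singleton block $B$ of $\pi^N_1$ whose other members all lie in $\{n+1,\ldots,N\}$, then $\{i\}=B\cap[n]$ \emph{is} a block of $\gamma_n(\sigma^N_1)$. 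In particular $\sigma=\{\{i\}\}$ is a legitimate nonempty value, not the empty semi-partition; the case $n=1$, $\sigma=\{\{1\}\}$ is precisely what makes $b_N=\P(\gamma_1(\sigma^N_1)=\{\{1\}\})$ and is the linchpin of the tail bound.

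Second, your tail estimate is vague where it matters (``something like $\E[\sum_i x^N_1(i)^{\#B}\cdots]$''). The clean argument, which the paper uses, is: for every $\sigma\in\S_n\setminus\{\emptyset\}$ one has $K^N_n(x,\sigma)\le K^N_1(x,\{\{1\}\})$ by exchangeability (some sample must land in a non-singleton family). Integrating over $\Delta\setminus\Delta_\ep$ and writing this as $b_N-\int_{\Delta_\ep}K^N_1(x,\{\{1\}\})\,\Xi^N(dx)$, one applies \reff{Conv:eq:singl-dust} to $c_N^{-1}b_N$ and \reff{Conv:eq:cond-conv} to the $\Delta_\ep$-integral (with $K^\infty_1(x,\{\{1\}\})=|x|_1$) to get
\[\limsup_{N\to\infty}c_N^{-1}\int_{\Delta\setminus\Delta_\ep}K^N_n(x,\sigma)\,\Xi^N(dx)\le\int_{\Delta\setminus\Delta_\ep}|x|_1|x|_2^{-2}\,\Xi(dx),\]
which is finite since $\Xi\in\dust$ and tends to zero as $\ep\downarrow0$. (Also, your ``$\{|x|_1\le\ep\}$'' should read $\{x(1)\le\ep\}=\Delta\setminus\Delta_\ep$.)
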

\begin{rem}
Note that the assumption of Lemma~\ref{Conv:lem:conv-rates} is condition (2.9) in \cite{Sag03}*{Theorem 2.1}, and that the limits in the assertion of Lemma~\ref{Conv:lem:conv-rates} are the limits (16) in \cite{MS01}*{Theorem 2.1}.
In Subsection \ref{Conv:sec:proof-conv-rates}, we prove Lemmas~\ref{Conv:lem:conv-rates} and~\ref{Conv:lem:conv-rates-dust} directly, using continuity properties in particular of the probability kernel associated with Kingman's correspondence.
\end{rem}

\subsection{Proofs of invariance principles}
\begin{proof}[Proof of Theorem~\ref{Conv:thm:nd}]
Let $n\in\N$, $N\geq n$, and $\phi\in\C_n$. As in \cite{GPW13}, we associate with $\phi$ not only the polynomial $\Phi:\U\to\R$, but also the $N$-polynomial
\[\Phi_N:\U_N\to\R,\quad\chi\mapsto\nu^{N,\chi}\phi.\]
As in Remark \ref{Conv:rem:Markov-chi}, let $\p_N$ denote the transition kernel of the Markov chain $(\chi^N_k,k\in\N_0)$.
Then,
\begin{align*}
p_N\Phi_N(\chi^N_0)&=\E[\Phi_N(\chi^N_1)]=\E[\phi(\rho^N_1)]\\
&=\sum_{\pi\in\p_n\setminus\{\mathbf{0}_n\}}\P(\gamma_n(\pi^N_1)=\pi)
\int\nu^{N,\chi^N_0}(d\rho)\phi(\pi(\gamma_n(\rho))+c_N\utwo_n).
\end{align*}
For the second equality, we use that $\nu^{N,\chi^N_k}$ is the conditional distribution of $\rho^N_1$ given $\chi^N_1$ which follows as $\rho^N_1$ is exchangeable.
By the construction in Section~\ref{Conv:sec:Cannings}, the conditional distribution of $\gamma_n(\rho^N_1)-c_N\underline{\underline{2}}_n$ given $\gamma_n(\pi^N_1)$ equals $\gamma_n(\pi^N_1)(\gamma_n(\nu^{N,\chi^N_0}))$. This yields the third equality. Now we have
\begin{align}
&c_N^{-1}(p_N-I)\Phi_N(\chi)\notag\\
&=\P(\gamma_n(\pi^N_1)=\mathbf{0}_n)
\int\nu^{N,\chi}(d\rho)c_N^{-1}(\phi(\rho+c_N\underline{\underline 2}_N)-\phi(\rho))\notag\\
&\quad+\sum_{\pi\in\p_n\setminus\{\mathbf{0}_n\}}c_N^{-1}\P(\gamma_n(\pi^N_1)=\pi)
\int\nu^{N,\chi}(d\rho)(\phi(\pi(\gamma_n(\rho))+c_N\underline{\underline{2}}_n)-\phi(\rho))\label{Conv:eq:proof:thm-nd:prelim}
\end{align}
for all $\chi\in\U_N$. As $c_N=\P(\gamma_2(\pi^N_1)=\{\{1,2\}\})$, by exchangeability of $\pi^N_1$, and by assumption \reff{Conv:eq:cond-cN},
\[1-\P(\gamma_n(\pi^N_1)=\mathbf{0}_n)
\leq \binom{n}{2}c_N\to 0\quad (N\to\infty).\]
As $\phi\in\C_n$,
\[\lim_{N\to\infty}\sup_{\rho\in\Uu_N}
\left|c_N^{-1}(\phi(\rho+c_N\underline{\underline 2}_N)-\phi(\rho))
-\langle \nabla \phi, \utwo \rangle (\rho)\right|=0,\]
this follows from the mean value theorem and uniform continuity of the derivative.
Furthermore, for every bounded measurable function $f:\R^{n^2}\to\R$,
\begin{equation}
\label{Conv:eq:dm-Ndm}
\int\nu^{N,\chi}(d\rho)f(\gamma_n(\rho))
-\int\nu^\chi(d\rho) f(\gamma_n(\rho))\leq
2\sup |f|\, n^2/N.
\end{equation}
This follows as for every semi-metric measure space $\chi=\ew{\sN,\rho,N^{-1}\sum_{i=1}^N\delta_i}\in\U_N$, we can couple $\gamma_n(\nu^{N,\chi})$ and $\gamma_n(\nu^\chi)$ by sampling $n$ times from $\sN$ uniformly with replacement and accepting this as a sample without replacement on the event that no element of $[N]$ is drawn more than once. The probability of the complementary event is bounded from above by
\[1-(1-n/N)^n\leq n^2/N.\]
Taking $f(\rho)=\langle \nabla \phi, \utwo\rangle(\rho)$, we obtain from the bound \reff{Conv:eq:dm-Ndm}
\begin{align*}
&\left|\int\nu^{N,\chi}(d\rho)c_N^{-1}(\phi(\rho+c_N\utwo_N)-\phi(\rho))
-\int\nu^\chi(d\rho)\langle \nabla \phi,\utwo\rangle (\rho)\right|\\
&\leq\int\nu^{N,\chi}(d\rho)\left|
c_N^{-1}(\phi(\rho+c_N\utwo_N)-\phi(\rho))
-\langle \nabla \phi,\utwo\rangle (\rho)\right|
+2\sup|\langle \nabla \phi,\utwo\rangle|\,n^2/N.
\end{align*}
With $f(\rho)=\phi(\pi(\rho))-\phi(\rho)$, we obtain from \reff{Conv:eq:dm-Ndm}
\begin{align*}
&\left|\int\nu^{N,\chi}(d\rho)(\phi(\pi(\gamma_n(\rho))+c_N\utwo_N)-\phi(\rho))-\int\nu^\chi(d\rho)(\phi(\pi(\gamma_n(\rho)))-\phi(\rho))\right|\\
&\leq\sup\{|\phi(\rho')-\phi(\rho)|:\rho,\rho'\in\Uu_n,\|\rho-\rho'\|\leq c_N\}\\
&\quad +4\sup|\phi|\, n^2/N.
\end{align*}
Using also Lemma~\ref{Conv:lem:conv-rates}, we now obtain from equation \reff{Conv:eq:proof:thm-nd:prelim} and the definition of $B$ in Section \ref{Conv:sec:TVFV-nd} the convergence
\[\lim_{N\to\infty}\sup_{\chi\in\U_N}
\left|c_N^{-1}(p_N-I)\Phi_N(\chi)-B\Phi(\chi)\right|=0.\]

The algebra $\Pi$ of polynomials strongly separates points in $\U$ by \cite{BK10}*{Lemma 4} and as $\Pi$ generates the topology on $\U$. Let $L$ denote the closure of $\Pi$ for the supremum norm in the space of bounded continuous functions on $\U$. Analogously to Corollaries 9.3 and 9.4 in \cite{Sampl}, the closure of $B$ generates the semigroup on $L$ of a $\U$-valued $\Xi$-Fleming-Viot process, which is strongly continuous.
The assertion now follows from Corollary 4.8.9 in \cite{EK86}, condition (h) therein is satisfied. To see that the limit on the left-hand side of equation (8.48) on p.\,234 of \cite{EK86} equals zero, we set $f=\phi$ in equation~\reff{Conv:eq:dm-Ndm}.  
\end{proof}
\begin{proof}[Proof of Theorem~\ref{Conv:thm:distr}]
Let $n\in\N$ and $N\geq n$. Let $p'_N$ be the transition kernel of the Markov chain $(\xi^N_k,k\in\N_0)$. Let $\phi\in\C_n$,
\[\Psi:\UUerg\to\R,\quad\xi\mapsto\xi\phi,\]
and
\[\Psi_N:\UU_N\to\R,\quad\nu^\chi\mapsto\nu^{N,\chi}\phi.\]
Then, as in the proof of Theorem~\ref{Conv:thm:nd},
\[p'_N\Psi_N(\xi^N_0)
=\E[\Psi_N(\xi^N_1)]
=\E[\nu^{N,\chi^N_1}\phi]
=\E[\phi(\rho^N_1)]\]
and
\begin{align*}
&c_N^{-1}(p_N-I)\Psi_N(\nu^\chi)\\
&=\P(\gamma_n(\pi^N_1)=\mathbf{0}_n)
\int\nu^{N,\chi}(d\rho)c_N^{-1}(\phi(\rho+c_N\underline{\underline 2}_N)-\phi(\rho))\\
&\quad+\sum_{\pi\in\p_n\setminus\{\mathbf{0}_n\}}c_N^{-1}\P(\gamma_n(\pi^N_1)=\pi)
\int\nu^{N,\chi}(d\rho)(\phi(\pi(\gamma_n(\rho))+c_N\underline{\underline{2}}_n)-\phi(\rho))
\end{align*}
for all $\chi\in\U_N$. As in the proof of Theorem~\ref{Conv:thm:nd}, we have
\[\lim_{N\to\infty}\sup_{\chi\in\U_N}
\left|c_N^{-1}(p_N-I)\Psi_N(\nu^\chi)-C\Psi(\nu^\chi)\right|=0.\]
The algebra $\Cc$ strongly separates points in $\UUerg$ by \cite{BK10}*{Lemma 4} and as $\Cc$ generates the weak topology on $\UUerg$. We conclude in the same way as in the proof of Theorem~\ref{Conv:thm:nd}, applying \cite{EK86}*{Corollary 4.8.9} and the analogs of Corollaries 9.3 and 9.4 in \cite{Sampl} for $\UUerg$-valued $\Xi$-Fleming-Viot processes.
\end{proof}
\begin{proof}[Proof of Proposition~\ref{Conv:prop:mod} (beginning)]
In the the first part of the proof, we assume $\Xi\in\dust$. Let $n\in\N$, $N\geq n$, $\phi\in\hat\C_n$. As in \cite{DGP12}, we associate with $\phi$ not only the marked polynomial $\Phi$ but also the marked $N$-polynomial
\[\Phi_N:\hat\U_N\to\R,\quad\chi\mapsto\nu^{N,\chi}\phi.\]
Let $\tilde p_N$ be the transition kernel of the Markov chain $(\tilde\chi^N_k,k\in\N_0)$. Similarly to the proof of Theorem~\ref{Conv:thm:nd}, we have
\[\tilde p_N\Phi_N(\tilde\chi^N_k)
=\sum_{\sigma\in\S_n}\P(\gamma_n(\sigma^N_1)=\sigma)
\int\nu^{N,\chi}(dr\,dv)\phi(\sigma(\gamma_n(r,v))+c_N(0,\uone_n)).\]
We obtain
\begin{align*}
&c_N^{-1}(\tilde p_N-I)\Phi_N(\chi)\\
&=\P(\gamma_n(\sigma^N_1)=\emptyset)
\int\nu^{N,\chi}(dr\,dv)c_N^{-1}(\phi(r,v+c_N\underline 1_N))-\phi(r,v))\\
&\quad+\sum_{\sigma\in\S_n\setminus\{\emptyset\}}c_N^{-1}\P(\gamma_n(\sigma^N_1)=\sigma)
\int\nu^{N,\chi}(dr\,dv)(\phi(\sigma(\gamma_n(r,v))+c_N(0,\underline{1}_n))-\phi(r,v))
\end{align*}
for all ${\chi}\in\hat\U_N$. By condition~\reff{Conv:eq:singl-dust} and as $\Xi\in\dust$,
\[\lim_{N\to\infty}c_N^{-1}\P(\gamma_1(\sigma^N_1)=\{\{1\}\})=\lambda_{1,\{\{1\}\}}<\infty.\]
Here we can use Lemma~\ref{Conv:lem:conv-rates-dust} or, more simply, that $b_N=\P(\gamma_1(\sigma^N_1)=\{\{1\}\})$ by exchangeability. 
Exchangeability and the assumption $\lim_{N\to\infty}c_N=0$ now 
imply
\[\P(\gamma_n(\sigma^N_1)=\emptyset)
\geq 1-n\P(\gamma_1(\sigma^N_1)=\{\{1\}\})\to 1\quad (N\to\infty).\]
Under our assumption that $\Xi\in\dust$, we conclude analogously to the proof of Theorem~\ref{Conv:thm:nd}. Here we use Lemma~\ref{Conv:lem:conv-rates-dust} from the present article, Corollaries 9.3 and 9.4 in \cite{Sampl}, and we apply Lemma 4 of \cite{BK10} to $\hat\Pi$.
\end{proof}

\subsection{Proofs of Lemmas~\ref{Conv:lem:conv-rates} and~\ref{Conv:lem:conv-rates-dust}}
\label{Conv:sec:proof-conv-rates}
For $N\in\N$ and $n\in\sN$, we define a probability kernel $\kappa^N_n$ from $\Delta^N$ to $\p_n$, and a probability kernel $K^N_n$ from $\Delta^N$ to $\S_n$.

For $x\in\Delta^N$, let $\kappa^N_N(x,\cdot)$ be the uniform distribution on those partitions in $\p_N$ whose block sizes are given by (any reordering of) the nonzero elements of the sequence $(Nx(\ell),\ell\in\N)$. Then we define $\kappa^N_n(x,\cdot)$ as the restriction $\kappa^N_n(x,\cdot)=\gamma_n(\kappa^N_N(x,\cdot))$. 

The distribution of $\kappa^N_n(x,\cdot)$ can also be described by the following urn scheme: Consider an urn that contains $Nx(\ell)$ balls of color $\ell$ for each $\ell\in\N$. Sample $n$ balls without replacement. Then the random partition of $\sn$ where $i,j\in\sn$ are in the same block if and only if the $i$-th and $j$-th ball have the same color has distribution $\kappa^N_n(x,\cdot)$.
If we modify this urn scheme such that the balls are sampled with replacement, then we obtain the distribution $\kappa^\infty_n(x,\cdot)$, as a comparison with the definition of $\kappa^\infty_n(x,\cdot)$ in Section~\ref{Conv:sec:TVFV-nd} shows.

For $x\in\Delta^N$, let $(y(1),y(2),\ldots)$ be the (possibly empty) finite subsequence of $(Nx(i),i\in\N)$ that consists of the elements that are greater or equal to $2$.
Let $K^N_N(x,\cdot)$ be the uniform distribution on those elements of $\S_N$ that consist of disjoint subsets of $\sN$ whose sizes are given by (an arbitrary reordering of) $(y(1),y(2),\ldots)$. If $(Nx(i),i\in\N)$ contains no elements that are greater or equal to $2$, then $K^N_N(x,\cdot)$ is the Dirac measure in $\emptyset\in\S_N$. We set $K^N_n(x,\cdot)=\gamma_n(K^N_N(x,\cdot))$.

In other words, consider an urn that contains $Nx(\ell)$ balls of color $\ell$ for each $\ell\in\N$. Recolor each ball whose color occurs only once with a new color $0$. Then sample $n$ balls without replacement. Consider the random element $\sigma$ of $\S_n$ where any $i,j\in\sn$ are in a common block if and only if the $i$-th and $j$-th ball have the same color that is not $0$. Then $\sigma$ has distribution $K^N_n(x,\cdot)$. When we sample with replacement, this distribution is $K^\infty_n(x,\cdot)$ instead, as a comparison with the definition of $K^\infty_n(x,\cdot)$ in Section~\ref{Conv:sec:TVFV-dust} shows.

In the following lemma and its proof, we recall and slightly extend some well-known continuity properties from e.\,g.\ Proposition 2.9 in Bertoin \cite{Bertoin}.
We endow $\N$ with the discrete topology and consider the one-point compactification $\bar\N=\N\cup\{\infty\}$.
Let $\bar\N=\N\cup\{\infty\}$ be the one-point compactification of the space $\N$, endowed with the discrete topology. We write $\Delta^\infty=\Delta$ and for $\ep>0$, we define
\[S_\ep=\{(x,N)\in\Delta_\ep\times\bar\N:x\in\Delta^N\}.\]
\begin{lem}
\label{Conv:lem:cont}
Let $n\in\N$, $\pi\in\p_n$, $\sigma\in\S_n$, and $\ep>0$. Then the maps
\[S_\ep\to[0,1],\quad (x,N)\mapsto\kappa^N_n(x,\pi)\]
and
\[S_\ep\to[0,1],\quad (x,N)\mapsto K^N_n(x,\sigma)\]
are continuous.
\end{lem}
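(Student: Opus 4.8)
The plan is to reduce the continuity of $(x,N)\mapsto\kappa^N_n(x,\pi)$ and $(x,N)\mapsto K^N_n(x,\sigma)$ on $S_\ep$ to an explicit formula for these kernels that depends on $x$ only through finitely many coordinates and that varies continuously as $N$ passes through the boundary point $\infty$. First I would note that convergence in $S_\ep$ of a sequence $(x_m,N_m)\to(x,N)$ means $N_m\to N$ in $\bar\N$ and $x_m\to x$ coordinatewise in $\Delta$; since $x\in\Delta_\ep$, we have $x(1)>\ep$, so for all large $m$ the leading block sizes $N_m x_m(1),N_m x_m(2),\dots$ down to the last coordinate exceeding, say, $\ep/2$ are eventually constant integers (when $N=\infty$, ``$N_m x_m(i)$'' tends to $\infty$ for each such $i$ with $x(i)>0$). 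This stabilization is what makes the finite-$N$ formula converge.

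The key step is to write $\kappa^N_n(x,\pi)$ as a ratio of falling factorials. If $\pi$ has blocks $B_1,\dots,B_r$ of sizes $b_1,\dots,b_r$, then sampling $n$ balls without replacement from an urn with $Nx(\ell)$ balls of colour $\ell$ gives
\[
\kappa^N_n(x,\pi)=\frac{1}{(N)_{\downarrow n}}\sum_{\ell_1,\dots,\ell_r\ \text{distinct}}\prod_{t=1}^{r}(Nx(\ell_t))_{\downarrow b_t},
\]
where $(a)_{\downarrow b}=a(a-1)\cdots(a-b+1)$ and the sum is over injective assignments of colours to blocks, up to the usual relabelling that accounts for $\pi$ being a partition rather than an ordered tuple (this combinatorial bookkeeping is routine and I would not spell it out in full). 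Dividing numerator and denominator by $N^n$, each factor $(Nx(\ell))_{\downarrow b}/N^{b}$ converges, as $N\to\infty$ with $Nx(\ell)\to\infty$, to $x(\ell)^{b}$; and for finite $N$ the expression is manifestly a continuous (indeed locally constant, once the integer block sizes have stabilized) function of $x$. Hence the whole ratio converges to $\sum x(\ell_1)^{b_1}\cdots x(\ell_r)^{b_r}$ summed over distinct colours, which is exactly the sampling-with-replacement formula, i.e.\ $\kappa^\infty_n(x,\pi)$. The tail of the sum (colours $\ell$ with $x(\ell)$ small) is controlled uniformly because $\sum_\ell x(\ell)\le 1$ and each term carries at least one factor $x(\ell_t)^{b_t}\le x(\ell_t)$ with $b_t\ge 1$, so a standard $\ep$-truncation argument closes the gap between the finite sum that stabilizes and the full series. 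For $K^N_n(x,\sigma)$ the argument is identical except that before sampling one recolours every singleton block with the neutral colour $0$; since $\sigma$ records only matches in non-neutral colours, the same falling-factorial formula applies with the blocks of $\sigma$ playing the role of the $B_t$ and with the colours restricted to those $\ell$ having $Nx(\ell)\ge 2$. The condition $x\in\Delta_\ep$ again ensures that which colours are ``$\ge 2$'' is eventually determined by the stabilized leading coordinates, so no discontinuity arises from a block size crossing the threshold $2$.

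The main obstacle is the behaviour at the boundary point $N=\infty$: one must check that the finite-$N$ urn probabilities genuinely converge to the infinite-population (with-replacement) probabilities, uniformly enough in the colour index to interchange limit and (infinite) sum. This is where the restriction to $\Delta_\ep$ does the real work — it guarantees a uniform lower bound on the mass of the largest block, hence eventual constancy of the finitely many relevant integer block sizes — and it is also the only place where a genuine estimate (the $\ep$-truncation of the colour series, using $\sum_\ell x(\ell)\le 1$ and $b_t\ge 1$) is needed rather than bare continuity of finite sums. Everything else is the elementary observation that a falling factorial $(a)_{\downarrow b}$ divided by $a^{b}$ tends to $1$ as $a\to\infty$ and that a finite product and finite sum of continuous functions is continuous.
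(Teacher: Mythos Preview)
Your approach via explicit falling-factorial formulas is workable but takes a substantially longer route than the paper. The paper avoids all explicit computation via two couplings: first, couple sampling with and without replacement by accepting an $n$-sample with replacement as one without replacement on the event that no ball is repeated, which gives the uniform bound $|\kappa^N_n(x,\pi)-\kappa^\infty_n(x,\pi)|\le 1-(1-n/N)^n\le n^2/N$ (and the identical bound for $K$); second, couple $\kappa^\infty_n(x,\cdot)$ and $\kappa^\infty_n(y,\cdot)$ by feeding the \emph{same} uniforms $U_1,\dots,U_n$ into the paintbox, yielding $|\kappa^\infty_n(x,\pi)-\kappa^\infty_n(y,\pi)|\le n|x-y|_1$. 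A single triangle inequality then finishes. What your approach buys is an explicit polynomial expression for the kernels; what it costs is the need to control an infinite colour-sum term by term together with the falling-factorial-to-power asymptotics, where the coupling delivers one clean error bound independent of $x$ and of the colour index.

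There is also a genuine gap in your $K^N_n$ argument. You claim that $x\in\Delta_\ep$ ensures that ``which colours are $\ge 2$ is eventually determined by the stabilized leading coordinates, so no discontinuity arises from a block size crossing the threshold $2$.'' This is false: take $x_m(1)=1/2$ and $x_m(\ell)=2/N_m$ for $\ell=2,\dots,N_m/4+1$; then $(x_m,N_m)\in S_\ep$ for $\ep<1/2$ and $x_m\to(1/2,0,0,\dots)$ coordinatewise, yet every colour has size $\ge 2$ for every $m$ and the set of such colours never stabilizes. What you actually need is not stabilization of the threshold set but that the \emph{total mass} carried by tail colours is negligible, so that whether they individually sit at size $1$ or $2$ is irrelevant to the limit. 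This requires $\ell^1$-convergence of $x_m$ to $x$ --- which is precisely the metric the paper's bound $n|x-y|_1$ presupposes on $\Delta$ --- not merely coordinatewise convergence; under coordinatewise convergence alone the example above gives $K^{N_m}_1(x_m,\{\{1\}\})=1\not\to 1/2=|x|_1=K^\infty_1(x,\{\{1\}\})$. Reframe your truncation in terms of tail mass under the $\ell^1$ metric rather than stabilization of integer block sizes, and the argument goes through.
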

\begin{proof}
Let $N\geq n$ and $x\in\Delta^N$. We couple the probability distributions $\kappa^\infty_n(x,\cdot)$ and $\kappa^N_n(x,\cdot)$ by starting with the urn scheme for $\kappa^\infty_n(x,\infty)$ given above and accepting the sample with replacement as a sample without replacement on the event that all sampled balls are different. The probability of this event is bounded from below by $(1-n/N)^n$, and we obtain the bound
\[|\kappa^N_n(x,\pi)-\kappa^\infty_n(x,\pi)|\leq 1-\left(1-n/N\right)^n\leq n^2/N.\]
Using the analogous coupling of the probability distributions $K^\infty_n(x,\cdot)$ and $K^N_n(x,\cdot)$, we obtain the bound
\[|K^N_n(x,\sigma)-K^\infty_n(x,\sigma)|\leq 1-\left(1-n/N\right)^n\leq n^2/N.\]
Furthermore, for $x,y\in\Delta$, we can couple the probability distributions $\kappa^\infty_n(x,\cdot)$ and $\kappa^\infty_n(y,\cdot)$ by using the same uniform random variables in Kingman's correspondence which we recalled in Section~\ref{Conv:sec:TVFV-nd}. This yields
\[|\kappa^\infty_n(x,\pi)-\kappa^\infty_n(y,\pi)|\leq n|x-y|_1.\]
Similarly, using the definition of $K^\infty_n$ from Section~\ref{Conv:sec:TVFV-dust}, we obtain
\[|K^\infty_n(x,\sigma)-K^\infty_n(y,\sigma)|\leq n|x-y|_1.\]

W.\,l.\,o.\,g., let $((x_k,N_k),k\in\N)$ be a sequence in $S_\ep$ that converges to some $(x,\infty)\in S_\ep$. Then,
\begin{align*}
&|K^{N_k}_n(x_k,\sigma)-K^\infty_n(x,\sigma)|\\
&\leq|K^{N_k}_n(x_k,\sigma)-K^\infty_n(x_k,\sigma)|
+|K^\infty_n(x_k,\sigma)-K^\infty_n(x,\sigma)|\\
&\leq n^2/N_k+n|x_k-x|_1,
\end{align*}
and the right-hand side converges to zero as $k$ tends to infinity. The argument for 
$|\kappa^{N_k}_n(x_k,\pi)-\kappa^\infty_n(x,\pi)|$ is the same.
\end{proof}
\begin{proof}[Proof of Lemma~\ref{Conv:lem:conv-rates}]
Let $n\in\N$. By construction, we have
\[\P(\gamma_n(\pi^N_1)=\pi)=\int\kappa^N_n(x,\pi)\Xi^N(dx)\]
for all $\pi\in\p_n$ and $N\geq n$.
For arbitrarily small $\ep>0$, assumption~\reff{Conv:eq:cond-conv} implies the weak convergence
\[c_N^{-1}\Xi^N(dx)\delta_N(dN')\wto\left|x\right|_2^{-2}\Xi(dx)\delta_\infty(dN')\quad\text{on }\Delta_\ep\times\bar\N\quad(N\to\infty)\]
which also holds on $S_\ep$ as none of these measures have mass on the complement of $S_\ep$ in $\Delta_\ep\times\bar\N$. Using Lemma~\ref{Conv:lem:cont}, we obtain for each $\pi\in\p_n$
\begin{equation}
\label{Conv:eq:conv-nd}
\begin{aligned}
&\lim_{N\to\infty}c_N^{-1}\int_{\Delta_\ep}\kappa^N_n(x,\pi)\Xi^N(dx)
=\lim_{N\to\infty}\int_{S_\ep}\kappa^{N'}_n(x,\pi)c_N^{-1}\Xi^N(dx)\delta_N(dN')\\
&=\int_{S_\ep}\kappa^{N'}_n(x,\pi)\left|x\right|_2^{-2}\Xi(dx)\delta_\infty(dN')
=\int_{\Delta_\ep}\kappa^\infty_n(x,\pi)\left|x\right|_2^{-2}\Xi(dx).
\end{aligned}
\end{equation}

For every sufficiently small $\ep>0$, for $N\geq n$, and $x\in\Delta^N\setminus\Delta_\ep$, the urn scheme for the probability distribution $\kappa^N_n(x,\cdot)$ yields that
\begin{align}
&\kappa^N_2(x,\{\{1,2\}\})(1-n\ep)^n\notag\\
&\leq
\kappa^N_2(x,\{\{1,2\}\})\frac{N-\ep N}{N-2}\cdots\frac{N-(n-2)\ep N}{N-(n-1)}\notag\\
&\leq\kappa^N_n(x,\{\{1,2\},\{3\},\ldots,\{n\}\})\notag\\
&\leq\kappa^N_2(x,\{\{1,2\}\}).
\label{Conv:eq:conv-kappa2}
\end{align}
Indeed, we can draw a partition of $\{1,\ldots,n\}$ according to the distribution $\kappa^N_n(x,\cdot)$ by drawing $n$ times without replacement from $N$ individuals that are subdivided into families of sizes $(Nx(\ell),\ell\in\N)$, and letting $i$ and $j$ be in the same block if and only if the $i$-th and $j$-th drawn individual are in the same family. The second inequality follows as the family sizes are at most $\ep N$ by our assumption on $x$.

For $x\in\Delta\setminus\Delta_\ep$ and every partition $\pi\in\p_n$ that contains one doubleton and apart from that only singletons, we have
\begin{equation}
\label{Conv:eq:conv-kappa3}
\kappa^N_n(x,\pi)=\kappa^N_n(x,\{\{1,2\},\{3\},\ldots,\{n\}\})
\end{equation}
by exchangeability.
Moreover,
\begin{equation}
\label{Conv:eq:conv-c_N}
c_N=\int\kappa^N_2(x,\{\{1,2\}\})\Xi^N(dx)
\end{equation}
by exchangeability.

Consequently, for arbitrarily small $\ep>0$ as in condition~\reff{Conv:eq:cond-conv}, and any partition $\pi\in\p_n$ that contains one doubleton and apart from that only singletons,
\begin{align*}
&\limsup_{N\to\infty}c_N^{-1}\int_{\Delta\setminus\Delta_\ep}\kappa^N_n(x,\pi)\Xi^N(dx)
\leq\limsup_{N\to\infty}c_N^{-1}\int_{\Delta\setminus\Delta_\ep}\kappa^N_2(x,\{\{1,2\}\})\Xi^N(dx)\\
&=1-\liminf_{N\to\infty}c_N^{-1}\int_{\Delta_\ep}\kappa^N_2(x,\{\{1,2\}\})\Xi^N(dx)
=1-\Xi(\Delta_\ep),
\end{align*}
where we use \reff{Conv:eq:conv-kappa2} and \reff{Conv:eq:conv-kappa3} for the first, equation \reff{Conv:eq:conv-c_N} for the second, and the convergence~\reff{Conv:eq:conv-nd} and $\kappa^\infty_2(x,\{\{1,2\}\})=\left|x\right|_2^2$ for the third step.
Analogously, we obtain for the same partitions $\pi$ that
\[\liminf_{N\to\infty}c_N^{-1}\int_{\Delta\setminus\Delta_\ep}\kappa^N_n(x,\pi)\Xi^N(dx)
\geq (1-n\ep)^{n}(1-\Xi(\Delta_\ep)).\]

For every partition $\pi\in\p_n$ that contains a block of size greater than $2$, $N\geq n$, and $x\in\Delta^N\setminus\Delta_\ep$ we have
\[\kappa^N_n(x,\pi)\leq\kappa^N_3(x,\{\{1,2,3\}\})\leq\kappa^N_2(x,\{\{1,2\}\})\frac{N\ep-2}{N-2}\]
which can be seen from the urn scheme for $\kappa^N_n(x,\cdot)$ and the exchangeability therein. We obtain for these partitions $\pi$ that
\[\limsup_{N\to\infty}c_N^{-1}\int_{\Delta\setminus\Delta_\ep}\kappa^N_n(x,\pi)\Xi^N(dx)
\leq \ep(1-\Xi(\Delta_\ep)).\]

Similarly, for every partition $\pi\in\p_n$ that contains more than one non-singleton block, $N\geq n$, and $x\in\Delta^N\setminus\Delta_\ep$, we have
\[\kappa^N_n(x,\pi)\leq\kappa^N_4(x,\{\{1,2\},\{3,4\}\})\leq\kappa^N_2(x,\{\{1,2\}\})\frac{N\ep-1}{N-3},\]
hence
\[\limsup_{N\to\infty}c_N^{-1}\int_{\Delta\setminus\Delta_\ep}\kappa^N_n(x,\pi)\Xi^N(dx)
\leq \ep(1-\Xi(\Delta_\ep)).\]

For $\pi\in\p_n\setminus\{\mathbf{0}_n\}$, we write
\[c_N^{-1}\P(\gamma_n(\pi^N_1)=\pi)
=c_N^{-1}\int_{\Delta_\ep}\kappa^N_n(x,\pi)\Xi^N(dx)
+c_N^{-1}\int_{\Delta\setminus\Delta_\ep}\kappa^N_n(x,\pi)\Xi^N(dx).\]
Now we let first $N\to\infty$. For the first integral, we use the convergence~\reff{Conv:eq:conv-nd}. For the second integral, we use the bounds for the limes superior and if necessary also for the limes inferior. Then we let $\ep$ tend to zero along a sequence such that for each element of this sequence, the weak convergence in condition~\reff{Conv:eq:cond-conv} holds. Thus we obtain convergence to the rates $\lambda_\pi$ as asserted in Lemma~\ref{Conv:lem:conv-rates}.
\end{proof}
\begin{proof}[Proof of Lemma~\ref{Conv:lem:conv-rates-dust}]
The proof is analogous to Lemma \ref{Conv:lem:conv-rates}. By construction, we have
\[\P(\gamma_n(\sigma^N_1)=\sigma)=\int K^N_n(x,\sigma)\Xi^N(dx)\]
for all $\sigma\in\S_n$ and $N\geq n$. Analogously to the convergence \reff{Conv:eq:conv-nd}, we obtain from Lemma \ref{Conv:lem:cont} and assumption~\reff{Conv:eq:cond-conv} that
\begin{equation}
\label{Conv:eq:conv-dust}
\lim_{N\to\infty}c_N^{-1}\int_{\Delta_\ep}K^N_n(x,\sigma)\Xi^N(dx)
=\int_{\Delta_\ep}K^\infty_n(x,\sigma)\left|x\right|_2^{-2}\Xi(dx)
\end{equation}
for all $\sigma\in\S_n$ and arbitrarily small $\ep>0$.

For every $\sigma\in\S_n\setminus\{\emptyset\}$, $N\geq n$, and $x\in\Delta^N$, it holds
\[K^N_n(x,\sigma)\leq K^N_1(x,\{\{1\}\}),\]
by exchangeability. Furthermore,
\[b_N=\int K^N_1(x,\{\{1\}\})\Xi^N(dx).\]
Now we obtain for these $\sigma$ and arbitrarily small $\ep>0$
\begin{align}
&\limsup_{N\to\infty}c_N^{-1}\int_{\Delta\setminus\Delta_\ep}K^N_n(x,\sigma)\Xi^N(dx)
\leq\limsup_{N\to\infty}c_N^{-1}\int_{\Delta\setminus\Delta_\ep}K^N_1(x,\{\{1\}\})\Xi^N(dx)\notag\\
&=\lim_{N\to\infty}b_N/c_N-\lim_{N\to\infty}c_N^{-1}\int_{\Delta_\ep}K^N_1(x,\{\{1\}\})\Xi^N(dx)
=\int_{\Delta\setminus\Delta_\ep}\left|x\right|_1\left|x\right|_2^{-2}\Xi(dx),
\label{Conv:eq:K-bound-dust}
\end{align}
where we use assumption~\reff{Conv:eq:singl-dust}, the convergence~\reff{Conv:eq:conv-dust}, and $K^\infty_1(x,\{\{1\}\})=\left|x\right|_1$ in the last equality.
We conclude by combining the convergence~\reff{Conv:eq:conv-dust} and the bound~\reff{Conv:eq:K-bound-dust}. We let $\ep$ tend to zero and use dominated convergence as the integrands on the right-hand sides of \reff{Conv:eq:conv-dust} and \reff{Conv:eq:K-bound-dust} are bounded by $|x|_1|x|_2^{-2}$, and as
\[\int_\Delta\left|x\right|_1\left|x\right|_2^{-2}\Xi(dx)<\infty.\]
This yields the assertion.
\end{proof}

\section{Convergence of marked metric measure spaces in the dust-free case}
\label{Conv:sec:mmm-nd}
In this section, we prove Theorem~\ref{Conv:thm:dust} and Proposition~\ref{Conv:prop:mod} for $\Xi\in\nd$.
We use the isometric embedding $\beta_0:\U_N\to\hat\U_N$, $\ew{X,\rho,\mu}\mapsto\ew{X,\rho,\mu\otimes\delta_0}$ which maps a metric measure space to the associated marked metric measure space that supports only the zero mark.
We compare the process $(\beta_0(\chi^N_{\fl{c_N^{-1}t}}),t\in\R_+)$ to the processes $(\hat\chi^N_{\fl{c_N^{-1}t}},t\in\R_+)$ and $(\tilde\chi^N_{\fl{c_N^{-1}t}},t\in\R_+)$.

Recall from Section~\ref{Conv:sec:dec} the map $\beta:\U_N\to\hat\U_N$ which decomposes a tree at the external branches. Also recall from Section \ref{Conv:sec:dist-dec} the functions $\alpha$ and $\Upsilon$. We denote also by $\alpha$ the function\label{Conv:not:alphamm} from $\hat\U_N$ to $\U_N$ that maps $\chi$ to $\psi_N(\alpha(r,v))$, where $(r,v)$ is any element of $\hat\Uu_N$ with $\hat\psi_N(r,v)=\chi$.
The function $\alpha:\hat\U_N\to\U_N$ retrieves a metric measure space from a marked metric measure space by adding the marks to the metric distances.
For $\ell\geq 2$, we define the map
\[\Upsilon^\ell_1:\Uu\to\R_+,\quad \Upsilon^\ell_1=\gamma_1\circ\Upsilon\circ\gamma_\ell.\]
In the subtree spanned by the first $\ell$ leaves of the tree associated with some $\rho\in\Uu$, the length of the external branch that ends in the first leaf is given by $\Upsilon^\ell_1(\rho)$. We also define the restriction $\varpi:\hat\Uu\to\R_+$, $(r,v)\mapsto v(1)$.

We recall that the Prohorov metric $\dP$ on the space of probability measures on the Borel sigma algebra of a metric space $(S,d)$ is given by
\begin{equation}
\label{Conv:eq:def-dP}
\dP(\mu,\mu')=\inf\{\ep>0:\mu'(F)\leq\mu(F^\ep)+\ep\text{ for all closed }F\subset S\},
\end{equation}
where $F^\ep=\{x\in S:d(x,F)<\ep\}$.
If $(S,d)$ is separable, then we also have the coupling characterization
\begin{equation}
\label{Conv:eq:dP-coupl}
\dP(\mu,\mu')=\inf_\nu\inf\{\ep>0:\nu\{(x,y)\in S^2:d(x,y)>\ep\}<\ep\}
\end{equation}
where the first infimum is over all couplings $\nu$ of $\mu$ and $\mu'$, see e.\,g.\ \cite{EK86}*{Theorem 3.1.2}.
\begin{lem}
\label{Conv:lem:bound-MGP}
Let $\chi\in\hat\U_N$. Then,
\[\dMGP({\chi},\beta_0\circ\alpha(\chi))\leq\dP(\varpi(\nu^{\chi}),\delta_0).\]
\end{lem}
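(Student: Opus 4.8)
The goal is to bound the marked Gromov-Prohorov distance between $\chi\in\hat\U_N$ and the marked metric measure space $\beta_0\circ\alpha(\chi)$, which is obtained from $\chi$ by adding the marks to the metric distances and then resetting all marks to zero. The plan is to use the coupling characterization \reff{Conv:eq:dP-coupl} of the Prohorov metric on the space of marked distance matrices, together with the fact (recalled in Section~\ref{Conv:sec:prelim:mm}) that the marked Gromov-weak topology is metrized by $\dMGP$, so it suffices to control the Prohorov distance between the marked distance matrix distributions $\nu^\chi$ and $\nu^{\beta_0\circ\alpha(\chi)}$ on the space $\hat\Uu$ (viewed as a subspace of $\R^{\N^2}\times\R^\N$).

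First I would fix a representative $(r,v)\in\hat\Uu_N$ with $\hat\psi_N(r,v)=\chi$, so that $\alpha(\chi)=\psi_N(\alpha(r,v))$ and $\beta_0\circ\alpha(\chi)$ has marked distance matrix distribution given by sampling $(x(i),i\in\N)$ uniformly from $\sN$ with replacement and returning $((\alpha(r,v)(x(i),x(j)))_{i,j},(0)_{i})$. Sampling the \emph{same} indices $(x(i),i\in\N)$ produces a coupling of $\nu^\chi$ and $\nu^{\beta_0\circ\alpha(\chi)}$: under $\nu^\chi$ we read off $((r(x(i),x(j)))_{i,j},(v(x(i)))_i)$, and under $\nu^{\beta_0\circ\alpha(\chi)}$ we read off $((\alpha(r,v)(x(i),x(j)))_{i,j},(0)_i)$. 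The key point is that, by the definition of $\alpha$, for $i\neq j$ we have $\alpha(r,v)(x(i),x(j)) = v(x(i)) + r(x(i),x(j)) + v(x(j))$, so the two distance matrices agree off the diagonal only after the marks are absorbed; the discrepancy in each distance-matrix entry is $v(x(i))+v(x(j))$, and the discrepancy in each mark entry is $v(x(i))$. Hence in the product metric on $\R^{\N^2}\times\R^\N$ (which on $\hat\Uu$ should be taken compatibly with the metrics inducing the product topology, dominated coordinatewise by the marks), this coupled pair of configurations lies within distance controlled by $\sup_i v(x(i))$, but since that supremum can be large the bound is not uniform — instead one should use that the Prohorov metric only needs the discrepancy to be small with high probability.

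So the sharper step is: let $\ep = \dP(\varpi(\nu^\chi),\delta_0)$, where $\varpi(\nu^\chi)$ is the law of $v(x(1))$ for a uniform sample $x(1)$ from $\sN$. By \reff{Conv:eq:def-dP} applied with the closed set $F=\{0\}$, we get $\varpi(\nu^\chi)((\ep,\infty)) = \varpi(\nu^\chi)(\{0\}^\ep{}^c) \le \ep$ — that is, $\P(v(x(1))>\ep)\le\ep$. I would then restrict the coupling above to the event $E=\{v(x(i))\le\ep \text{ for all } i\}$; but this event has probability zero in general for infinitely many iid samples, so instead one truncates: the marked Gromov-Prohorov metric can be computed via finite restrictions, or one uses that convergence in $\dMGP$ is equivalent to weak convergence of all finite-dimensional marked distance matrix marginals together with a suitable tightness, and bounds the finite-$n$ Prohorov distances, then passes to the limit. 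On the event that $v(x(i))\le\ep$ for the finitely many indices involved, the distance-matrix entries differ by at most $2\ep$ and the marks by at most $\ep$; off that event (probability at most $n\ep$ by a union bound over $n$ samples) we make no claim. This gives, for each finite $n$, a bound of the form $C_n\ep$ on the $n$-marginal Prohorov distance — and here the main obstacle, and the point requiring care, is matching the particular metrization of $\dMGP$ used in the paper so that these finite-$n$ bounds assemble into the clean estimate $\dMGP(\chi,\beta_0\circ\alpha(\chi))\le\ep$ with no extra constant: presumably the chosen complete metric on $\R^{\N^2}\times\R^\N$ and the specific form of $\dMGP$ from \cite{DGP11} are arranged (via a summable weighting of coordinates, as is standard) precisely so that a pointwise coordinatewise discrepancy at most $\ep$ on an event of probability at least $1-\ep$ yields Prohorov distance at most $\ep$. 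I would therefore phrase the coupling directly at the level of $\dMGP$, invoking the coupling characterization \reff{Conv:eq:dP-coupl} for $\dP$ on $\hat\Uu$ and the relation between $\dMGP$ and this $\dP$ on marked distance matrix distributions, with the event $\{v(x(1))>\ep\}$ of probability at most $\ep$ as the single "bad" set.
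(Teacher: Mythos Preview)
Your coupling intuition is right --- the ``good'' indices are precisely those $i\in[N]$ with $v(i)\le c$, and their proportion is at least $1-c$ by the definition~\reff{Conv:eq:def-dP} of $\dP(\varpi(\nu^\chi),\delta_0)$ --- but the execution has a genuine gap. You try to work at the level of the marked distance matrix distributions $\nu^\chi$ and $\nu^{\beta_0\circ\alpha(\chi)}$ on $\hat\Uu$, and you need the inequality $\dMGP(\chi,\chi')\le\dP(\nu^\chi,\nu^{\chi'})$ with constant~$1$. That inequality is not available: $\dMGP$ is defined as an infimum of Prohorov distances over isometric embeddings of the \emph{underlying spaces} into a common metric space (see \cite{DGP11}), not via the distance matrix distributions. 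The two metrics induce the same (marked Gromov-weak) topology, but there is no constant-free comparison in the direction you need. Your own ``presumably'' is exactly where the argument breaks, and the union-bound detour through finite $n$ produces $n$-dependent constants that do not vanish.

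The paper avoids this entirely by working directly with the embedding characterization of $\dMGP$, formulated as Proposition~\ref{Conv:prop:MGP-rel}: it suffices to produce a relation $\fR\subset[N]\times[N]$ between the semi-metric spaces $([N],r)$ and $([N],\rho)$ with $\tfrac12\dis\fR\le c$, together with a coupling $\nu$ of the two mark measures satisfying $\nu(\hat\fR)\ge 1-c$. Taking $\fR=\{(i,i):v(i)\le c\}$ and the diagonal coupling $\nu=N^{-1}\sum_i\delta_{((i,v(i)),(i,0))}$ gives $\dis\fR\le 2c$ from $|r(i,j)-\rho(i,j)|\le v(i)+v(j)$, and $\nu(\hat\fR)=N^{-1}\#\{i:v(i)\le c\}=\varpi(\nu^\chi)[0,c]\ge 1-c$. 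No iid sampling, no infinite products, no passage to a limit in $n$ --- the bound drops out in one line once you have the right characterization of $\dMGP$.
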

\begin{cor}
\label{Conv:cor:bound-MGP}
For every $\chi\in\U_N$,
\[\dMGP(\beta(\chi),\beta_0(\chi))\leq\dP(\varpi(\nu^{\beta(\chi)}),\delta_0).\]
\end{cor}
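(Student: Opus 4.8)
The plan is to obtain Corollary~\ref{Conv:cor:bound-MGP} as an immediate consequence of Lemma~\ref{Conv:lem:bound-MGP}, applied to the marked metric measure space $\beta(\chi)\in\hat\U_N$ rather than to $\chi$ itself. With this substitution the lemma reads $\dMGP(\beta(\chi),\beta_0\circ\alpha(\beta(\chi)))\leq\dP(\varpi(\nu^{\beta(\chi)}),\delta_0)$, so everything reduces to showing $\alpha(\beta(\chi))=\chi$; then $\beta_0\circ\alpha(\beta(\chi))=\beta_0(\chi)$ and the asserted inequality follows. In words, one has to check that decomposing the (unlabeled) tree represented by $\chi$ at its external branches and then re-adding the marks to the metric distances returns $\chi$.

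First I would verify the corresponding identity at the level of labeled semi-ultrametrics, namely $\alpha\circ\beta=\mathrm{id}$ on $\Uu_N$. For $N\geq 2$ and $\rho\in\Uu_N$, write $\beta(\rho)=(r,v)$ with $v=\Upsilon(\rho)$ and $r(i,j)=(\rho(i,j)-v(i)-v(j))\I{i\neq j}$; then for distinct $i,j$ one computes $\alpha(r,v)(i,j)=v(i)+r(i,j)+v(j)=\rho(i,j)$, while $\alpha(r,v)(i,i)$ and $\rho(i,i)$ both vanish, so $\alpha(\beta(\rho))=\rho$. For $N=1$ the conventions $\Upsilon(\rho)=0$ and $\beta(\rho)=(\rho,0)$ give $\alpha(\beta(\rho))=\rho$ as well.

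Next I would lift this identity to isomorphy classes. Pick $\rho\in\Uu_N$ with $\psi_N(\rho)=\chi$; by the definition of the map $\beta:\U_N\to\hat\U_N$ one has $\beta(\chi)=\hat\psi_N(\beta(\rho))$, so that $\beta(\rho)$ is a representative in $\hat\Uu_N$ of $\beta(\chi)$. By the definition of the map $\alpha:\hat\U_N\to\U_N$, which does not depend on the choice of representative, we may therefore compute $\alpha(\beta(\chi))=\psi_N(\alpha(\beta(\rho)))=\psi_N(\rho)=\chi$ using the previous step. Substituting $\beta(\chi)$ for $\chi$ in Lemma~\ref{Conv:lem:bound-MGP} and using $\beta_0\circ\alpha(\beta(\chi))=\beta_0(\chi)$ then yields the claim.

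I do not expect any real obstacle here: all of the metric content sits in Lemma~\ref{Conv:lem:bound-MGP}, and the only point requiring care is that the maps $\alpha$ and $\beta$ on isomorphy classes are well defined independently of the chosen representative, so that the representative-free identity $\alpha\circ\beta=\mathrm{id}$ transfers from $\Uu_N$ to $\U_N$.
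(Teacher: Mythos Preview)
Your proposal is correct and follows exactly the paper's approach: the paper's proof is the single sentence ``This is immediate from Lemma~\ref{Conv:lem:bound-MGP} as $\alpha\circ\beta$ is the identity on $\U_N$,'' and you have simply unpacked why $\alpha\circ\beta=\mathrm{id}$ holds, first on $\Uu_N$ and then on isomorphy classes.
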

\begin{proof}
This is immediate from Lemma~\ref{Conv:lem:bound-MGP} as $\alpha\circ\beta$ is the identity on $\U_N$.
\end{proof}
We prove Lemma~\ref{Conv:lem:bound-MGP} using a characterization of the marked Gromov-Prohorov metric $\dMGP$ that we will apply also in Section~\ref{Conv:sec:mmm-dust}.
The distortion $\dis{\mathfrak{R}}$ of a relation $\mathfrak{R}\subset X\times X'$ between two metric spaces $(X,r)$ and $(X',r')$ is defined by
\[\dis{\mathfrak{R}}=\sup\{\left|r(x,y)-r'(x',y')\right|:(x,x'),(y,y')\in\mathfrak{R}\}.\]
\begin{prop}
\label{Conv:prop:MGP-rel}
Let $(X,r,m)$ and $(X',r',m')$ be marked metric measure spaces. Then\\
$\dMGP(\ew{X,r,m},\ew{X',r',m'})$ is the infimum of all $c>0$ such that there exist a relation $\mathfrak{R}\subset X\times X'$ and a coupling $\nu$ of $m$ and $m'$ with $\tfrac{1}{2}\dis\mathfrak{R}\leq c$ and $\nu(\mathfrak{\tilde R})\geq 1-c$, where $\mathfrak{\hat R}\subset(X\times\R_+)\times(X'\times\R_+)$ is defined by
\[\mathfrak{\hat R}=\{((x,v),(x',v')):(x,x')\in\mathfrak{R},|v-v'|\leq c\}.\]
\end{prop}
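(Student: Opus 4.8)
The plan is to derive the two inequalities between $\dMGP(\ew{X,r,m},\ew{X',r',m'})$ and the infimum in the statement, which I denote $c^{*}$, from the definition of $\dMGP$ as an infimum of Prohorov distances over isometric embeddings of the relevant supports into a common complete separable metric space (as in \cite{DGP11}), together with the coupling characterization~\reff{Conv:eq:dP-coupl} of the Prohorov metric; this is the marked analogue of the familiar relation characterization of $\dGP$. Throughout one may assume, replacing $(X,r,m)$ and $(X',r',m')$ by representatives of their isomorphy classes, that $X=\supp m(\cdot\times\R_{+})$ and $X'=\supp m'(\cdot\times\R_{+})$, and that $\mathfrak{R}$ is closed so that $\nu(\hat{\mathfrak{R}})$ is defined; intersecting a given relation with $X\times X'$ changes neither $\dis\mathfrak{R}$ nor $\nu(\hat{\mathfrak{R}})$, since $m$ and $m'$ do not charge the complements of these supports.

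To show $c^{*}\le\dMGP$, I would fix $c>\dMGP$ and a complete separable metric space $(Z,d_{Z})$ with isometric embeddings $\varphi\colon X\to Z$ and $\varphi'\colon X'\to Z$ such that the Prohorov distance between the push-forwards $(\varphi,\mathrm{id})_{*}m$ and $(\varphi',\mathrm{id})_{*}m'$ on $Z\times\R_{+}$ (with the max-metric) is less than $c$. By~\reff{Conv:eq:dP-coupl} there is a coupling $\hat\nu$ of these two measures with $\hat\nu\{((z,v),(z',v'))\colon d_{Z}(z,z')\vee|v-v'|>c\}<c$; transporting it back through the embeddings yields a coupling $\nu$ of $m$ and $m'$. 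Put $\mathfrak{R}=\{(x,x')\colon d_{Z}(\varphi(x),\varphi'(x'))\le c\}$, which is closed, hence measurable. For $(x_{1},x_{1}'),(x_{2},x_{2}')\in\mathfrak{R}$ the triangle inequality in $Z$ gives $|r(x_{1},x_{2})-r'(x_{1}',x_{2}')|\le d_{Z}(\varphi x_{1},\varphi'x_{1}')+d_{Z}(\varphi x_{2},\varphi'x_{2}')\le 2c$, so $\tfrac12\dis\mathfrak{R}\le c$; and since $(x,x')\notin\mathfrak{R}$ is equivalent to $d_{Z}(\varphi x,\varphi'x')>c$, the complement of $\hat{\mathfrak{R}}$ maps exactly onto $\{d_{Z}\vee|v-v'|>c\}$, so it has $\nu$-mass less than $c$, i.e.\ $\nu(\hat{\mathfrak{R}})\ge1-c$. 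Hence $c$ is admissible, $c^{*}\le c$, and letting $c\downarrow\dMGP$ gives $c^{*}\le\dMGP$.

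To show $\dMGP\le c^{*}$, I would take an admissible $c$ witnessed by $(\mathfrak{R},\nu)$; if $\mathfrak{R}=\emptyset$, then $\nu(\hat{\mathfrak{R}})=0$ forces $c\ge1$ and the claim holds because $\dMGP\le1$, so assume $\mathfrak{R}\neq\emptyset$. On the disjoint union $X\sqcup X'$ I would define $d$ to equal $r$ on $X$, to equal $r'$ on $X'$, and to be $d(x,x')=\inf_{(y,y')\in\mathfrak{R}}\bigl(r(x,y)+c+r'(y',x')\bigr)$ for $x\in X$, $x'\in X'$. The triangle inequalities whose apex is a vertex from the same space as both other vertices are immediate from the infimum; the two remaining cases, such as $r(x_{1},x_{2})\le d(x_{1},x')+d(x_{2},x')$, follow from the triangle inequalities in $X$ and $X'$ together with $\dis\mathfrak{R}\le 2c$. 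Identifying points at $d$-distance zero and passing to the completion yields a complete separable metric space $Z$ with isometric embeddings $\varphi,\varphi'$ of $X,X'$ (the quotient map is injective on each piece because $r,r'$ are metrics), and any $(x,x')\in\mathfrak{R}$ satisfies $d_{Z}(\varphi x,\varphi'x')\le r(x,x)+c+r'(x',x')=c$. Pushing $m,m'$ forward and transporting $\nu$ to a coupling of the push-forwards on $(Z\times\R_{+})^{2}$, the set $\{d_{Z}(z,z')\vee|v-v'|>c\}$ is contained in the image of the complement of $\hat{\mathfrak{R}}$ (this uses the displayed bound $d_{Z}(\varphi x,\varphi'x')\le c$ on $\mathfrak{R}$) and hence has mass at most $\nu$ of that complement, which is $\le c$; by~\reff{Conv:eq:dP-coupl} the Prohorov distance between the push-forwards is $\le c$, so $\dMGP\le c$, and taking the infimum over admissible $c$ gives $\dMGP\le c^{*}$.

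I expect the one genuinely delicate point to be the verification that the glued function $d$ is a pseudometric — concretely, the ``detour'' inequality $r(x_{1},x_{2})\le d(x_{1},x')+d(x_{2},x')$, which must rule out shortcuts running through $X'$ and is precisely where the distortion bound $\tfrac12\dis\mathfrak{R}\le c$ enters (used twice, via the triangle inequality in $X$). The remaining ingredients — reduction to the supports, measurability of $\mathfrak{R}$, and the bookkeeping of the quotient and completion — are routine, and apart from carrying the $\R_{+}$-coordinate along inside the max-metric on $Z\times\R_{+}$ the argument is the same as for the unmarked Gromov-Prohorov metric.
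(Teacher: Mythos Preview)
Your proposal is correct and takes essentially the same approach as the paper, which cites \cite{Miermont09}*{Proposition~6} and only sketches the direction $\dMGP\le c^{*}$ via the same glued metric on $X\sqcup X'$ (referring to \cite{GPW09}*{Remark~5.5} for the triangle inequality). You additionally supply the converse direction $c^{*}\le\dMGP$ and spell out the details the paper omits, including the role of the distortion bound in the detour inequality.
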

\begin{proof}
This can be seen as an adaptation of Proposition 6 in \cite{Miermont09}. Here we sketch the proof of the upper bound for $\dMGP(\ew{X,r,m},\ew{X',r',m'})$. Let $c>0$ and assume $\mathfrak{R}$, $\mathfrak{\hat{R}}$, and $\nu$ with $\tfrac{1}{2}\dis\mathfrak{R}\leq c$ and $\nu(\mathfrak{\hat R})\geq 1- c$ are given as in the proposition. A metric $d^Z$ on the disjoint union $Z=X\sqcup X'$ can be defined by
$d^Z(x,y)=r(x,y)$ for $(x,y)\in X$,
$d^Z(x,y)=r'(x,y)$ for $(x,y)\in X'$, and
\[d^Z(x,x')=\inf\{r(x,y)+c+r'(y',x'):(y,y')\in\mathfrak{R}\},\]
for $x\in X,x'\in X'$, as in Remark 5.5 of \cite{GPW09}.
We endow $Z\times\R_+$ with the product metric
$d^{Z\times\R_+}((z,v),(z',v'))=d^Z(z,z')\vee\left|v-v'\right|$.
Let $\varphi:X\to Z$ and $\varphi':X'\to Z$ be the canonical embeddings. Moreover, let $\hat\varphi(x,v)=(\varphi(x),v)$ and $\hat\varphi'(x',v)=(\varphi'(x'),v)$ for $x\in X$, $x'\in X'$, and $v\in\R_+$. Then the coupling $\nu$ induces a coupling $\hat\nu$ of $\hat\varphi(m)$ and $\hat\varphi'(m')$ on $Z\times\R_+$ with
\[\hat\nu\{((z,v),(z',v')):d^Z(z,z')\vee|v-v'|\leq c\}\geq 1- c.\]
The coupling characterization \reff{Conv:eq:dP-coupl} of the Prohorov metric implies
$\dP(\hat\varphi(m),\hat\varphi'(m'))\leq c$. The definition of the marked Gromov-Prohorov metric, see \cite{DGP12}*{Definition 3.1}, implies
\[\dMGP(\ew{X,r,m},\ew{X',r',m'})\leq c.\]
\end{proof}

\begin{proof}[Proof of Lemma~\ref{Conv:lem:bound-MGP}]
Let $(r,v)$ be any element of $\hat\Uu_N$ with $\hat\psi_N(r,v)={\chi}$, and let $\rho=\alpha(r,v)$. Then,
\[\ew{\sN,r,N^{-1}\sum_{i=1}^N\delta_{(i,v(i))}}={\chi}\]
and
\[\ew{\sN,\rho,N^{-1}\sum_{i=1}^N\delta_{(i,0)}}=\beta_0\circ\alpha({\chi}).\]

Let $c>\dP(\varpi(\nu^{\chi}),\delta_0)$. We conclude by Proposition~\ref{Conv:prop:MGP-rel} which also holds for marked semi-metric measure spaces. To this aim, we define the relation
\[\fR=\{(i,i)\in\sN\times\sN:v(i)\leq c\}\]
between the semi-metric spaces $(\sN,r)$ and $(\sN,\rho)$. As $|r(i,j)-\rho(i,j)|\leq v(i)+v(j)$ by definition of the map $\alpha:\hat\Uu_N\to\Uu_N$, we can bound the distortion
by
\[\dis\fR=\max\{|r(i,j)-\rho(i,j)|:i,j\in\sN,v(i),v(j)\leq c\}\leq 2c.\]
We set
\[\hat\fR=\{((i,v(i)),(i,0)):i\in\sN,v(i)\leq c\}\subset(\sN\times\R_+)\times(\sN\times\R_+).\]
A coupling $\nu$ of the probability measures $N^{-1}\sum_{i=1}^N\delta_{(i,v(i))}$ and $N^{-1}\sum_{i=1}^N\delta_{(i,0)}$ is given by
\[\nu=N^{-1}\sum_{i=1}^N\delta_{((i,v(i)),(i,0))}.\]
Finally,
\[\nu(\hat\fR)=N^{-1}\sum_{i=1}^N\I{v(i)\leq c}=\varpi(\nu^{\chi})[0,c]\geq \delta_0\{0\}-c=1-c,\]
where the inequality follows from the choice of $c$ and the usual definition \reff{Conv:eq:def-dP} of the Prohorov metric.
\end{proof}
\begin{lem}
\label{Conv:lem:ext}
Let $\Xi\in\nd$, and let $(\chi_t,t\in\R_+)$ be a $\U$-valued $\Xi$-Fleming-Viot process with càdlàg paths. Then,
\[\lim_{\ell\to\infty}\sup_{t\in[0,T]}\dP(\Upsilon^\ell_1(\nu^{\chi_t}),\delta_0)=0\quad\text{a.\,s.}\]
for all $T\in\R_+$.
\end{lem}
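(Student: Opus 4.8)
The plan is to reduce the claim to Dini's theorem, applied over the (a.s.\ compact) closure of the image of the càdlàg path $(\chi_t,t\in[0,T])$ in $(\U,\dGP)$.

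\emph{A pointwise fact.} First I would check that for \emph{every} $\chi\in\U$ one has $\lim_{\ell\to\infty}\dP(\Upsilon^\ell_1(\nu^\chi),\delta_0)=0$. Fix a representative $(X,\rho,\mu)$ of $\chi$ with $X=\supp\mu$ and a $\mu$-iid sequence $(x(i))_{i\in\N}$. Then a.s.\ $x(1)\in X$ and (the space being separable and $\mu$ having full support) $\{x(i):i\geq 2\}$ is dense in $X$, so $\inf_{i\geq 2}\rho(x(1),x(i))=0$ a.s.; hence, writing $\Upsilon_1(\rho)=\tfrac12\inf_{j\geq 2}\rho(1,j)$, we get $\Upsilon_1=0$ $\nu^\chi$-a.s. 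Since $\Upsilon^\ell_1(\rho)=\tfrac12\min_{2\leq j\leq\ell}\rho(1,j)$ decreases to $\Upsilon_1(\rho)$ as $\ell\to\infty$ for every $\rho$, dominated convergence gives that $\Upsilon^\ell_1(\nu^\chi)$ converges weakly to $\delta_0$, which is the displayed limit. (Here the hypothesis $\Xi\in\nd$ enters only to guarantee that the $\U$-valued $\Xi$-Fleming-Viot process exists and has a version with càdlàg paths.)

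\emph{Continuity and monotonicity in $\chi$.} The map $\Upsilon^\ell_1=\gamma_1\circ\Upsilon\circ\gamma_\ell$ is continuous on $\Uu$, being a composition of continuous maps; hence, by the continuous mapping theorem and the fact that $\dGP$ metrizes the Gromov-weak topology (equivalently, weak convergence of the distance matrix distributions $\nu^\chi$), the function
\[g_\ell:\U\to\R_+,\qquad \chi\mapsto\dP(\Upsilon^\ell_1(\nu^\chi),\delta_0)\]
is continuous for each $\ell\geq 2$. Moreover $\Upsilon^{\ell+1}_1\leq\Upsilon^\ell_1$ pointwise on $\Uu$, so $\Upsilon^{\ell+1}_1(\nu^\chi)$ is stochastically dominated by $\Upsilon^\ell_1(\nu^\chi)$; since the only coupling of a given probability measure on $\R_+$ with $\delta_0$ is the product, the characterization \reff{Conv:eq:dP-coupl} (or directly \reff{Conv:eq:def-dP}) shows that $\dP(\cdot,\delta_0)$ is monotone under stochastic domination, whence $g_{\ell+1}\leq g_\ell$ on $\U$. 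Combined with the previous paragraph, $g_\ell\downarrow 0$ pointwise on $\U$.

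\emph{Dini's theorem on the path closure.} For a.e.\ realization the path $t\mapsto\chi_t$ is càdlàg on $[0,T]$ with values in the complete separable space $(\U,\dGP)$, hence its image is totally bounded and $K:=\overline{\{\chi_t:t\in[0,T]\}}$ is compact. On the compact set $K$, $(g_\ell)_{\ell\geq 2}$ is a non-increasing sequence of continuous functions converging pointwise to $0$, so Dini's theorem yields $\sup_{\chi\in K}g_\ell(\chi)\to 0$ as $\ell\to\infty$. Therefore
\[\sup_{t\in[0,T]}\dP(\Upsilon^\ell_1(\nu^{\chi_t}),\delta_0)=\sup_{t\in[0,T]}g_\ell(\chi_t)\leq\sup_{\chi\in K}g_\ell(\chi),\]
and the right-hand side tends to $0$ as $\ell\to\infty$; since this holds for a.e.\ realization, the a.s.\ convergence follows. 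The main obstacle is assembling the soft ingredients correctly — continuity of $\chi\mapsto\Upsilon^\ell_1(\nu^\chi)$, relative compactness of the càdlàg image, and the (perhaps mildly surprising) fact that $\Upsilon_1(\nu^\chi)=\delta_0$ for \emph{all} metric measure spaces, not only dust-free ones — so that Dini applies; the remaining estimates are routine.
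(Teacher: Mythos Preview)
Your proof is correct and genuinely different from the paper's. The paper argues by a stopping-time trick: it sets $\vartheta_{\ep,\ell}=\inf\{t:\dP(\Upsilon^\ell_1(\nu^{\chi_t}),\delta_0)>\ep\}$, uses monotonicity in $\ell$ to pass to $\vartheta_\ep=\sup_\ell\vartheta_{\ep,\ell}$, and then shows $\vartheta_\ep=\infty$ a.s.\ by applying the same pointwise fact you isolate (an iid sequence in a separable probability space has a.s.\ no isolated points) to both $\chi_{\vartheta_\ep\wedge t}$ and its left limit, combined with right/left continuity of the path and continuity of $\Upsilon^\ell_1$. Your route packages the same ingredients---monotonicity of $\ell\mapsto\Upsilon^\ell_1$, continuity of $g_\ell$, the pointwise limit $g_\ell\downarrow 0$---into a direct application of Dini's theorem on the compact closure of the c\`adl\`ag range, which absorbs the left-limit bookkeeping automatically. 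Your argument is shorter and more transparent; the paper's has the minor advantage of not invoking the (easy but perhaps not universally stated) fact that the range of a c\`adl\`ag path on a compact interval into a Polish space is relatively compact.
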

\begin{proof}
Let $\ep>0$. For $\ell\geq 2$, we define the random time
\[\vartheta_{\ep,\ell}=\inf\{t\in\R_+:\dP(\Upsilon^\ell_1(\nu^{\chi_t}),\delta_0)>\ep\}.\]
For all $\rho\in\Uu$, the map $\ell\mapsto \Upsilon^\ell_1(\rho)$ is non-increasing. We set
\[\vartheta_\ep=\sup_{\ell\in\N}\vartheta_{\ep,\ell}
=\lim_{\ell\to\infty}\vartheta_{\ep,\ell}.\]
Let $t\in(0,\infty)$ be arbitrary. On an event of probability $1$, let $(X,\rho,\mu)$
be a representative of $\chi_{\vartheta_\ep\wedge t}$, and let $(x_i,i\in\N)$ be a $\mu$-iid sequence in $X$. Then,
\[\inf_{j\in\N\setminus\{1\}}\rho(x_1,x_j)=0\quad\text{a.\,s.}\]
as an iid sequence with respect to a probability measure on the Borel sigma algebra on a separable metric space has a.\,s.\ no isolated elements.
Consequently,
\[\lim_{\ell\to\infty}\Upsilon^\ell_1((\rho(x_i,x_j))_{i,j\in\N})=0\quad\text{a.\,s.}\]
As the random matrix $(\rho(x_i,x_j))_{i,j\in\N}$ has conditional distribution $\nu^{\chi_{\vartheta_\ep\wedge t}}$ given $\chi_{\vartheta_\ep\wedge t}$, it follows
\[\lim_{\ell\to\infty}\dP(\Upsilon^\ell_1(\nu^{\chi_{\vartheta_\ep\wedge t}}),\delta_0)=0\quad\text{a.\,s.}\]
Analogously, it can be shown that
\[\lim_{\ell\to\infty}\dP(\Upsilon^\ell_1(\nu^{\chi_{(\vartheta_\ep\wedge t)-}}),\delta_0)=0\quad\text{a.\,s.}\]

As $(\chi_s,s\in\R_+)$ has càdlàg paths and as the maps $\chi\mapsto\nu^\chi$ and $\Upsilon^\ell_1$ are continuous, it follows that on an event of probability $1$, there exists $\ell\in\N$ such that
\begin{equation*}
\dP(\Upsilon^\ell_1(\nu^{\chi_s}),\delta_0)<\ep/2
\end{equation*}
for all $s$ in a neighborhood of $\vartheta_\ep\wedge t$. By monotonicity, it also holds
\[\dP(\Upsilon^{\ell'}_1(\nu^{\chi_s}),\delta_0)<\ep/2\]
for all $\ell'\geq\ell$ and $s$ in the same neighborhood, on the same event of probability $1$. This implies $\vartheta_{\ell,\ep}>t$ for $\ell$ sufficiently large a.\,s., hence $\{\vartheta_\ep<t\}$ is a null event. As $t$ was arbitrary, it follows $\vartheta_\ep=\infty$ a.\,s.\ which yields the assertion.
\end{proof}
\begin{proof}[Proof of Theorem~\ref{Conv:thm:dust} (beginning)]
First we assume $\Xi\in\nd$. Let $T\in\R_+$. By the assumption that $\hat\chi_0$ supports only the zero mark, there exists $\chi_0\in\U$ such that $\hat\chi^N_0$ converges to $\beta_0(\chi_0)$ in the marked Gromov-weak topology as $N$ tends to infinity. Hence, $\nu^{\hat\chi^N_0}$ converges weakly to $\nu^{\chi_0}\otimes\delta_0$. Recall the chain $(\chi^N_k,k\in\N_0)$ from Section~\ref{Conv:sec:inv-mm}. As $\nu^{\chi^N_0}=\alpha(\nu^{\hat\chi^N_0})$ converges weakly to $\nu^{\chi_0}$, also $\chi^N_0$ converges to $\chi_0$ in the Gromov-weak topology.
Hence, Theorem~\ref{Conv:thm:nd} is applicable and the processes $(\chi^N_{\fl{c_N^{-1}t}},t\in\R_+)$ converge in distribution to a $\U$-valued $\Xi$-Fleming-Viot process $(\chi_t,t\in\R_+)$ with initial state $\chi_0$ in the space of càdlàg paths in $(\U,\dGP)$, endowed with the Skorohod metric. For every $\ell\geq 2$, by continuity of the maps $\chi\mapsto\nu^\chi$ and $\Upsilon^\ell_1$, also $(\Upsilon^\ell_1(\nu^{\chi^N_{\fl{c_N^{-1}t}}}),t\in\R_+)$ converges in distribution to $(\Upsilon^\ell_1(\nu^{\chi_t}),t\in\R_+)$ in the space of càdlàg paths in $(\Mm_1(\R_+),\dP)$, endowed with the Skorohod metric, where $\Mm_1(\R_+)$ denotes the space of probability measures on $\R_+$.

For every $(r,v)\in\hat\Uu$ and $\rho=\alpha(r,v)$, it holds $\rho(1,j)\geq v(1)$ for all $j\geq 2$ by definition of the map $\alpha:\hat\Uu_N\to\Uu_N$, hence $v(1)\leq 2\Upsilon^\ell_1(\alpha(r,v))$ for all $\ell\geq 2$. This implies
\begin{align}
&\sup_{t\in[0,T]}\dP(\varpi(\nu^{\hat\chi^N_{\fl{c_N^{-1}t}}}),\delta_0)\label{Conv:eq:nd-chi-bound}\\
&\leq\sup_{t\in[0,T]}\dP(2\Upsilon^\ell_1(\alpha(\nu^{\hat\chi^N_{\fl{c_N^{-1}t}}})),\delta_0)\notag\\
&\leq2\sup_{t\in[0,T]}\dP(\Upsilon^\ell_1(\nu^{\chi^N_{\fl{c_N^{-1}t}}}),\delta_0).\notag
\end{align}
The expression in the last line converges in distribution to
\begin{equation}
\label{Conv:eq:thm-dust-nd-expr}
2 \sup_{t\in[0,T]}\dP(\Upsilon^\ell_1(\nu^{\chi_t}),\delta_0)
\end{equation}
as $N$ tends to infinity. This follows from the discussion in the beginning of this proof, as the maps $\Upsilon^\ell_1$ and $\dP(\cdot,\delta_0)$ are continuous, and as the process $(\Upsilon^\ell_1(\nu^{\chi_t}),t\in\R_+)$ has a.\,s.\ no discontinuity at the fixed time $T$. Finally, we let $\ell$ tend to infinity. By Lemma~\ref{Conv:lem:ext}, expression~\reff{Conv:eq:thm-dust-nd-expr} then converges to zero a.\,s. Consequently, also the left-hand side of~\reff{Conv:eq:nd-chi-bound} converges to zero in probability as $N$ tends to infinity. 

As $\hat\chi^N_k=\beta(\chi^N_k)$ for all $k\in\N_0$, Corollary~\ref{Conv:cor:bound-MGP} implies that
\[\sup_{t\in[0,T]}\dMGP(\hat\chi^N_{\fl{c_N^{-1}t}},\beta_0(\chi^N_{\fl{c_N^{-1}t}}))\]
converges to zero in probability.
The processes $(\beta_0(\chi^N_{\fl{c_N^{-1}t}}), t\in\R_+)$ converge in distribution to $(\beta_0(\chi_t),t\in\R_+)$ by another application of Theorem~\ref{Conv:thm:nd}.
The assertion for $\Xi\in\nd$ now follows from Slutzky's theorem.
\end{proof}
To prove Proposition~\ref{Conv:prop:mod} in case $\Xi\in\nd$, we will use the following coupling of $(\tilde\chi^N_k,k\in\N_0)$ and $(\chi^N_k,k\in\N_0)$.
\begin{rem}
\label{Conv:rem:coupl-mm-mod}
If $\chi^N_0=\alpha(\tilde\chi^N_0)$, then we can define $(r^N_0,v^N_0)$ and $\rho^N_0$ in Section \ref{Conv:sec:results} such that
\[\rho^N_0=\alpha(r^N_0,v^N_0).\]
We can then also assume that the processes $((r^N_k,v^N_k),k\in\N_0)$, $(\tilde\chi^N_k,k\in\N_0)$, and $(\rho^N_k,k\in\N_0)$ are defined as in Section \ref{Conv:sec:mod}. Then we obtain
$\rho^N_k=\alpha(r^N_k,v^N_k)$ for all $k\in\N$. When we define $\chi^N_k=\psi_N(\rho^N_k)$ and
$\hat\chi^N_k=\hat\psi_N\circ\beta(\rho^N_k)$ as in Section \ref{Conv:sec:results}, then
we also have $\chi^N_k=\alpha(\tilde\chi^N_k)$ and $\hat\chi^N_k=\beta\circ\alpha(\tilde\chi^N_k)$ for all $k\in\N_0$.
\end{rem}
\begin{proof}[Proof of Proposition~\ref{Conv:prop:mod} (end)]
In case $\Xi\in\nd$, the proof is almost identical with the proof of Theorem~\ref{Conv:thm:dust}. We replace $\hat\chi^N_\cdot$ with $\tilde\chi^N_\cdot$, and we set $\chi^N_0=\alpha(\tilde\chi^N_0)$. Then we use the coupling from Remark~\ref{Conv:rem:coupl-mm-mod} and apply Theorem~\ref{Conv:thm:nd}.
\end{proof}

\section{Convergence of marked metric measure spaces in the case with dust}
\label{Conv:sec:mmm-dust}
In this section, we complete the proof of Theorem~\ref{Conv:thm:dust}: In the case with dust, we compare the finite dimensional distributions of the processes $(\hat\chi^N_{\fl{c_N^{-1}t}},t\in\R_+)$ and $(\tilde\chi^N_{\fl{c_N^{-1}t}},t\in\R_+)$, and we show relative compactness of the sequence of processes $((\hat\chi^N_{\fl{c_N^{-1}t}},t\in\R_+),N\geq 2)$.

A metric $\tilde d$ on $\hat\Uu_N$ is defined by
\[\tilde d((r,v),(r',v'))=\max_{i,j\in\sN}|r(i,j)-r'(i,j)|\vee\max_{i\in\sN}|v(i)-v'(i)|.\]
W.\,l.\,o.\,g., we endow $\hat\Uu_N$ with the metric
\[d((r,v),(r',v'))=\max_{n\in\sN}(\tilde d(\gamma_n(r,v),\gamma_n(r',v'))\wedge (2^{-n}))\]
and $\hat\Uu$ with the metric
\[d((r,v),(r',v'))=\sup_{n\in\N}(\tilde d(\gamma_n(r,v),\gamma_n(r',v'))\wedge  (2^{-n})).\]
These metrics induce the topologies on $\hat\Uu_N$ and $\Uu$ from Section \ref{Conv:sec:dist-dec}. We have
\begin{equation}
\label{Conv:eq:d-U-restr}
d(\gamma_n(r,v),\gamma_n(r',v'))
\leq d((r,v),(r',v'))
\leq d(\gamma_n(r,v),\gamma_n(r',v'))+2^{-n}
\end{equation}
for all $(r,v),(r',v')\in\hat\Uu$.

We use definitions in particular from Section \ref{Conv:sec:dist-dec} and we need the following lemmas.
\begin{lem}
\label{Conv:lem:rv-v}
Let $n\in\N$ and $(r,v),(r',v')\in\hat\Uu_n$ with $\alpha(r,v)=\alpha(r',v')$. Then,
\[d((r,v),(r',v'))\leq 2\max_{i\in\sn}\left|v(i)-v'(i)\right|.\]
\end{lem}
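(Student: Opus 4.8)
The plan is to exploit the algebraic identity underlying $\alpha(r,v)=\alpha(r',v')$ and then reduce to an elementary pointwise comparison. First I would unwind the hypothesis: by the definition of the map $\alpha$ in Section~\ref{Conv:sec:dist-dec}, the equality $\alpha(r,v)=\alpha(r',v')$ means
\[
v(i)+r(i,j)+v(j)=v'(i)+r'(i,j)+v'(j)\qquad\text{for all distinct }i,j\in\sn.
\]
Rearranging gives $r(i,j)-r'(i,j)=(v'(i)-v(i))+(v'(j)-v(j))$ for distinct $i,j$, so by the triangle inequality $|r(i,j)-r'(i,j)|\leq|v(i)-v'(i)|+|v(j)-v'(j)|\leq 2\max_{k\in\sn}|v(k)-v'(k)|$, while for $i=j$ this difference is simply $0$.

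Next I would feed these bounds into the metric $d$ on $\hat\Uu_n$. For each $m\in\sn$, the restriction $\gamma_m$ merely cuts both the distance-matrix and the mark coordinates down to $\sm\subseteq\sn$, so the bound above yields $\max_{i,j\in\sm}|r(i,j)-r'(i,j)|\leq 2\max_{k\in\sn}|v(k)-v'(k)|$ and trivially $\max_{i\in\sm}|v(i)-v'(i)|\leq\max_{k\in\sn}|v(k)-v'(k)|\leq 2\max_{k\in\sn}|v(k)-v'(k)|$; hence $\tilde d(\gamma_m(r,v),\gamma_m(r',v'))\leq 2\max_{k\in\sn}|v(k)-v'(k)|$ for every $m$. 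Taking the maximum over $m\in\sn$ of these quantities truncated at $2^{-m}$ can only decrease them, so $d((r,v),(r',v'))\leq 2\max_{k\in\sn}|v(k)-v'(k)|$, which is the claim.

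There is no genuine obstacle here: the lemma is a direct consequence of the affine form of $\alpha$ together with the definition of the metric on $\hat\Uu_n$. The only point deserving a word of care is the passage from $\tilde d$ on $\hat\Uu_m$ to $d$ on $\hat\Uu_n$, namely that both the truncation at $2^{-m}$ and the maximum over $m\leq n$ keep the uniform pointwise bound $2\max_{k}|v(k)-v'(k)|$ intact.
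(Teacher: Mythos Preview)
Your argument is correct and follows essentially the same route as the paper: both derive $|r(i,j)-r'(i,j)|\leq|v(i)-v'(i)|+|v(j)-v'(j)|$ from the definition of $\alpha$ and then bound $d$ by $\tilde d$. The paper simply invokes $d\leq\tilde d$ in one line, whereas you spell out the passage through the restrictions $\gamma_m$ and the truncations, but this is the same idea made explicit.
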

\begin{proof}
Let $\rho=\alpha(r,v)=\alpha(r',v')$. By definition of $\alpha$,
\[|r(i,j)-r'(i,j)|=|\rho(i,j)-v(i)-v(j)-\rho(i,j)+v'(i)+v'(j)|
\leq|v(i)-v'(i)|+|v(j)-v'(j)|\]
for all distinct $i,j\in\sn$. It follows
\[d((r,v),(r',v'))\leq\tilde d((r,v),(r',v'))\leq
2\max_{i\in\sn}\left|v(i)-v'(i)\right|.\]
\end{proof}

In the next lemma, we consider the chain $(\tilde\chi^N_k,k\in\N_0)$ from Section \ref{Conv:sec:mod}. We show that if the initial state corresponds to the decomposition at the external branches, then in all generations, the mark of a sampled individual is not larger than the length of the corresponding external branch.
\begin{lem}
\label{Conv:lem:ext-v}
Let the $\hat\U_N$-valued chain $(\tilde\chi^N_k,k\in\N_0)$ be defined from the $\hat\Uu_N$-valued chain $((r^N_k,v^N_k),k\in\N_0)$ as in Section \ref{Conv:sec:mod}, and assume that $(r^N_0,v^N_0)=\beta\circ\alpha(r^N_0,v^N_0)$.
Then,
\[v\leq\Upsilon\circ\alpha(r,v)\quad\text{(component-wise)}\]
for $\nu^{N,\tilde\chi^N_k}$-a.\,a.\ $(r,v)\in\hat\Uu_N$ and all $k\in\N_0$ a.\,s.
\end{lem}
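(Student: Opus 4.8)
The plan is to reduce the assertion about $\nu^{N,\tilde\chi^N_k}$ to a pathwise statement about the marked distance matrix $(r^N_k,v^N_k)$ itself, and then to verify the latter by a direct analysis of the ancestry in the population model. First I record an elementary reformulation: for $(r,v)\in\hat\Uu_N$ with $\rho=\alpha(r,v)$, the inequality $v\leq\Upsilon(\rho)$ (component-wise) is equivalent to $2v(i)\leq\rho(i,j)$ for all distinct $i,j\in[N]$, since $\Upsilon(\rho)(i)=\tfrac12\min_{j\neq i}\rho(i,j)$. This property is invariant under the relabelling action $(r,v)\mapsto((r(x(i),x(j)))_{i,j},(v(x(i)))_i)$ of the permutations of $[N]$ on $\hat\Uu_N$, because that action turns $\alpha(r,v)$ into $(\rho(x(i),x(j)))_{i,j}$ and hence conjugates $\Upsilon\circ\alpha$ accordingly. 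Now, conditionally on $(r^N_k,v^N_k)$, the measure $\nu^{N,\tilde\chi^N_k}$ is the law of a uniform relabelling of $(r^N_k,v^N_k)$ --- this uses that $\hat\psi_N(r^N_k,v^N_k)=\tilde\chi^N_k$ and that the $N$-marked distance matrix distribution may be computed from any representative --- so $\nu^{N,\tilde\chi^N_k}$ is concentrated on the orbit of $(r^N_k,v^N_k)$ under relabellings. It therefore suffices to show that, almost surely for all $k\in\N_0$, one has $v^N_k\leq\Upsilon(\rho^N_k)$, equivalently $2v^N_k(i)\leq\rho^N_k(i,j)$ for all distinct $i,j\in[N]$, where $\rho^N_k=\alpha(r^N_k,v^N_k)$ by the definition in Section~\ref{Conv:sec:mod}.

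For $k=0$ this is exactly the hypothesis $(r^N_0,v^N_0)=\beta\circ\alpha(r^N_0,v^N_0)$, which by the definition of $\beta$ means $v^N_0=\Upsilon(\rho^N_0)$. For $\ell\geq1$ I would fix distinct $i,j\in[N]$ and split according to whether $A_0(\ell,i)=A_0(\ell,j)$. If $A_0(\ell,i)=A_0(\ell,j)$, set $m=\max\{k\in\{0,\ldots,\ell\}:A_k(\ell,i)=A_k(\ell,j)\}$; then $0\leq m<\ell$ (as $A_\ell(\ell,i)=i\neq j=A_\ell(\ell,j)$). By the dynamics of the population model, two individuals of generation $m+1$ have the same ancestor in generation $m$ if and only if they lie in a common block of $\pi^N_{m+1}$; applied to $A_{m+1}(\ell,i)$ and $A_{m+1}(\ell,j)$, which are distinct by maximality of $m$, this shows $A_{m+1}(\ell,i)$ sits in a non-singleton block of $\pi^N_{m+1}$, with $m+1\in[\ell]$. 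Hence the latest generation $k^*\in[\ell]$ at which $A_{k^*}(\ell,i)$ lies in a non-singleton block satisfies $k^*\geq m+1$, so $v^N_\ell(i)=c_N(\ell-k^*+1)\leq c_N(\ell-m)$, while $\rho^N_\ell(i,j)=2c_N(\ell-m)$, giving $2v^N_\ell(i)\leq\rho^N_\ell(i,j)$. If instead $A_0(\ell,i)\neq A_0(\ell,j)$, then $\rho^N_\ell(i,j)=2c_N\ell+\rho^N_0(A_0(\ell,i),A_0(\ell,j))\geq2c_N\ell$; if the first branch of the definition of $v^N_\ell(i)$ applies, then $v^N_\ell(i)\leq c_N\ell$ and we are done, and if the second branch applies, then $v^N_\ell(i)=c_N\ell+v^N_0(A_0(\ell,i))$ and the $k=0$ case, applied to the distinct pair $A_0(\ell,i),A_0(\ell,j)$, yields $2v^N_0(A_0(\ell,i))\leq\rho^N_0(A_0(\ell,i),A_0(\ell,j))$, hence $2v^N_\ell(i)\leq\rho^N_\ell(i,j)$.

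The point that needs care is the combinatorial observation in the case $A_0(\ell,i)=A_0(\ell,j)$: the most recent generation at which the lineages from $i$ and $j$ have already merged is realized by a reproduction event in which the ancestor of $i$ belongs to a non-singleton block, so that the "age" $v^N_\ell(i)$ --- which counts generations back to the most recent reproduction event on the lineage of $i$ --- is bounded by the coalescence time of $i$ and $j$. Everything else is bookkeeping: matching the two branches in the definitions of $v^N_\ell$ and $\rho^N_\ell$, using nonnegativity of $\rho^N_0$, and feeding the hypothesis on the initial state in through the $k=0$ case; the passage back from $(r^N_k,v^N_k)$ to $\nu^{N,\tilde\chi^N_k}$ is then immediate from the relabelling invariance established at the outset.
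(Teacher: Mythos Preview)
Your proof is correct and follows essentially the same approach as the paper: both reduce the statement about $\nu^{N,\tilde\chi^N_k}$ to the pathwise inequality $v^N_k\leq\Upsilon(\rho^N_k)$ via relabelling invariance, and then verify this by a direct analysis of the ancestry. The only cosmetic difference is the case split: the paper distinguishes according to whether $v^N_k(i)\leq c_Nk$ (i.e., which branch of the definition of $v^N_\ell$ applies), whereas you fix a pair $(i,j)$ and distinguish according to whether $A_0(\ell,i)=A_0(\ell,j)$ (i.e., which branch of the definition of $\rho^N_\ell$ applies); your key combinatorial observation---that the coalescence of the $i$- and $j$-lineages at generation $m$ forces $A_{m+1}(\ell,i)$ into a non-singleton block of $\pi^N_{m+1}$---is exactly the content of the paper's first case read in the other direction.
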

\begin{proof}
Let $(\rho^N_k,k\in\N_0)=(\alpha(r^N_k,v^N_k),k\in\N_0)$ as in Section \ref{Conv:sec:mod}.
By definition of $\Upsilon$,
\begin{equation}
\label{Conv:eq:lem-mmm-v-barv}
\Upsilon(\rho^N_k)(i)=\tfrac{1}{2}\min_{j\in\sN\setminus\{i\}}\rho^N_k(i,j)
\end{equation}
for all $i\in[N]$ and $k\in\N_0$.
We show $v^N_k(i)\leq \Upsilon(\rho^N_k)(i)$. The assertion follows as for $\nu^{N,\tilde\chi^N_k}$-a.\,a.\ $(r,v)\in\hat\Uu_N$, there exists a.\,s.\ a bijection $p$ on $[N]$ with $v(i)=v^N_k(p(i))$ for all $i\in[N]$ by definition of the $N$-marked distance matrix distribution.

Let $i\in[N]$, $k\in\N_0$, and $\bar v(i)=\Upsilon(\rho^N_k(i))$.
To show $v^N_k(i)\leq\bar v(i)$, we consider the cases $v^N_k(i)\leq c_Nk$ and $v^N_k(i)\geq c_Nk$ separately.

In the case $v^N_k(i)\leq c_Nk$, there are, by definition of $v^N_k$, no reproduction events from generations $k-c_N^{-1}v^N_k(i)+1$ to $k$ due to which the ancestral lineage of the individual $i$ of generation $k$ can merge with the ancestral lineage of a different individual. Hence, by definition of $\rho^N_k$ in Section \ref{Conv:sec:Cannings},
\[\min_{j\in\sN\setminus\{i\}}\rho^N_k(i,j)\geq 2v^N_k(i).\]
Equation~\reff{Conv:eq:lem-mmm-v-barv} yields $v^N_k(i)\leq\bar v(i)$ in case $v^N_k(i)\leq c_Nk$.

The statements in the remainder of this proof hold in the case $v^N_k(i)\geq c_Nk$.
There are no reproduction events in which the ancestral lineage of the individual $i$ of generation $k$ can merge with the ancestral lineage of a different individual. Hence, $A_0(k,i)\neq A_0(k,j)$ for all $j\in\sN\setminus\{i\}$. By definition of $\rho^N_k$,
\begin{equation}
\label{Conv:eq:lem-mmm-rhoN}
\rho^N_k(i,j)=2c_Nk+\rho^N_0(A_0(k,i),A_0(k,j))
\end{equation}
for all $j\in\sN\setminus\{i\}$. By definition of $v^N_k$,
\[v^N_k(i)=c_Nk+v^N_0(A_0(k,i)).\]
Also, $v^N_0=\Upsilon(\rho^N_0)$ by our assumption, hence
\begin{equation}
\label{Conv:eq:lem-mmm-vN}
v^N_0(A_0(k,i))\leq\tfrac{1}{2}\rho^N_0(A_0(k,i),\ell)
\end{equation}
for all $\ell\in\sN\setminus\{A_0(k,i)\}$. Using equations~\reff{Conv:eq:lem-mmm-v-barv}, \reff{Conv:eq:lem-mmm-rhoN}, and \reff{Conv:eq:lem-mmm-vN}, we obtain
\[\bar v(i)=\tfrac{1}{2}\min_{j\in\sN\setminus\{p(i)\}}\rho^N_k(i,j)
\geq c_Nk+v^N_0(A_0(k,i))=v^N_k(i).\]
\end{proof}

For $\ell\geq n\geq 2$, we introduce the map
\[\Upsilon^\ell_n:\Uu\cup\bigcup_{N\geq \ell}\Uu_N\to\R_+^n,\quad\Upsilon^\ell_n=\gamma_n\circ\Upsilon\circ\gamma_\ell.\]
The vector $\Upsilon^\ell_n(\rho)$ gives the lengths of the first $n$ external branches in the subtree spanned by the first $\ell$ leaves of the tree associated with some $\rho\in\Uu_N$ or $\rho\in\Uu$. We also define the restriction $\varpi_{\R_+^\N}:\hat\Uu\to\R_+^\N$, $(r,v)\mapsto v$. We endow $\R_+^n$ with the maximum norm and the induced metric. Let $\gamma_n:\R_+^\N\to\R^n_+$\label{Conv:not:gamma:R} be the restriction $v\mapsto(v(i))_{i\in\sn}$. Recall also the maps $\alpha$ and $\beta$ from Sections \ref{Conv:sec:dist-dec} and \ref{Conv:sec:mmm-nd}.
\begin{proof}[Proof of Theorem~\ref{Conv:thm:dust} (continuation)]
We set $\tilde\chi^N_0=\hat\chi^N_0$. Then, $\chi^N_0=\alpha(\tilde\chi^N_0)$ and we can assume that the processes $(\tilde\chi^N_k,k\in\N_0)$, $(\hat\chi^N_k,k\in\N_0)$, and $((r^N_k,v^N_k),k\in\N_0)$ are defined as in Remark~\ref{Conv:rem:coupl-mm-mod}. In particular, we have $\hat\chi^N_k=\beta\circ\alpha(\tilde\chi^N_k)$ for all $k\in\N_0$ a.\,s.

First we consider finite-dimensional distributions. Let $t\in\R_+$. For $N\geq\ell\geq n\geq 2$,
\begin{align*}
&\dP(\nu^{\hat\chi^N_{\fl{c_N^{-1}t}}},
\nu^{\tilde\chi^N_{\fl{c_N^{-1}t}}})\\
&\leq\dP(\gamma_n(\nu^{\beta\circ\alpha(\tilde\chi^N_{\fl{c_N^{-1}t}})}),
\gamma_n(\nu^{\tilde\chi^N_{\fl{c_N^{-1}t}}}))
+2^{-n+1}\\
&\leq\dP(\gamma_n(\nu^{N,\beta\circ\alpha(\tilde\chi^N_{\fl{c_N^{-1}t}})}),
\gamma_n(\nu^{N,\tilde\chi^N_{\fl{c_N^{-1}t}}}))
+2^{-n+1}+2n^2/N\\
&=\dP(\gamma_n\circ\beta\circ\alpha(\nu^{N,\tilde\chi^N_{\fl{c_N^{-1}t}}}),
\gamma_n(\nu^{N,\tilde\chi^N_{\fl{c_N^{-1}t}}}))
+2^{-n+1}+2n^2/N\\
&\leq 2\dP(\gamma_n\circ\Upsilon\circ\alpha(\nu^{N,\tilde\chi^N_{\fl{c_N^{-1}t}}}),
\gamma_n\circ\varpi_{\R_+^\N}(\nu^{N,\tilde\chi^N_{\fl{c_N^{-1}t}}}))
+2^{-n+1}+2n^2/N\quad\text{a.\,s.}
\end{align*}
For the first inequality, we use relation~\reff{Conv:eq:d-U-restr} and either \reff{Conv:eq:def-dP} or \reff{Conv:eq:dP-coupl}. For the last inequality, we use Lemma~\ref{Conv:lem:rv-v}, the definitions of $\Upsilon$ and $\beta$, and \reff{Conv:eq:def-dP} or \reff{Conv:eq:dP-coupl}. For the second inequality, we use the bounds
\[\dP(\gamma_n(\nu^{\beta\circ\alpha(\tilde\chi^N_{\fl{c_N^{-1}t}})}),
\gamma_n(\nu^{N,\beta\circ\alpha(\tilde\chi^N_{\fl{c_N^{-1}t}})}))\leq n^2/N\]
and
\[\dP(\gamma_n(\nu^{\tilde\chi^N_{\fl{c_N^{-1}t}}}),\gamma_n(\nu^{N,\tilde\chi^N_{\fl{c_N^{-1}t}}}))\leq n^2/N\]
which can be seen from the coupling characterization \reff{Conv:eq:dP-coupl} of the Prohorov metric. Here we couple sampling with and without replacement as in the proofs of e.\,g.\ \reff{Conv:eq:dm-Ndm} or of Lemma \ref{Conv:lem:cont}.

By definition of $\Upsilon^\ell_n$ and $\Upsilon$,
\[\gamma_n\circ\Upsilon(\rho)\leq\Upsilon^\ell_n(\rho)\]
for all $\rho\in\Uu_N$. Using Lemma~\ref{Conv:lem:ext-v}, we obtain
\begin{equation*}
|\gamma_n\circ\Upsilon\circ\alpha(r,v)-\gamma_n(v)|
\leq|\Upsilon^\ell_n\circ\alpha(r,v)-\gamma_n(v)|
\end{equation*}
for $\nu^{\tilde\chi^N_{\fl{c_N^{-1}t}}}$-a.\,a.\ $(r,v)\in\hat\Uu_N$ a.\,s. Again using the definition of the Prohorov metric, we obtain the first inequality in the following display.
\begin{align}
&\dP(\gamma_n\circ\Upsilon\circ\alpha(\nu^{N,\tilde\chi^N_{\fl{c_N^{-1}t}}}),
\gamma_n\circ\varpi_{\R_+^\N}(\nu^{N,\tilde\chi^N_{\fl{c_N^{-1}t}}}))\notag\\
&\leq\dP(\Upsilon^\ell_n\circ\alpha(\nu^{N,\tilde\chi^N_{\fl{c_N^{-1}t}}}),
\gamma_n\circ\varpi_{\R_+^\N}(\nu^{N,\tilde\chi^N_{\fl{c_N^{-1}t}}}))\notag\\
&\leq\dP(\Upsilon^\ell_n\circ\alpha(\nu^{\tilde\chi^N_{\fl{c_N^{-1}t}}}),
\gamma_n\circ\varpi_{\R_+^\N}(\nu^{\tilde\chi^N_{\fl{c_N^{-1}t}}}))
+\ell^2/N+n^2/N\quad\text{a.\,s.}
\label{Conv:eq:mmm-bound-dP}
\end{align}
For the second inequality, we again couple sampling with and without replacement, and we use the triangle inequality twice. By continuity of the maps $\chi\mapsto\nu^\chi$, $\alpha$, $\Upsilon^\ell_n$, $\varpi_{\R_+^\N}$, $\gamma_n$, and $\dP(\cdot,\cdot)$, and by Proposition~\ref{Conv:prop:mod},
the right-hand side of~\reff{Conv:eq:mmm-bound-dP} converges in distribution to
\begin{equation}
\label{Conv:eq:mmm-bound-dP-ln}
\dP(\Upsilon^\ell_n\circ\alpha(\nu^{\hat\chi_t}),
\gamma_n\circ\varpi_{\R_+^\N}(\nu^{\hat\chi_t}))
\end{equation}
as $N\to\infty$, where $(\hat\chi_s,s\in\R_+)$ is a $\hat\U$-valued $\Xi$-Fleming-Viot process with initial state $\hat\chi_0$. Here we also use that $(\hat\chi_s,s\in\R_+)$ has a.\,s.\ no discontinuity at the fixed time $t$.

For every $\rho\in\Uu$,
\[\lim_{\ell\to\infty}\Upsilon^\ell_n(\rho)=\gamma_n\circ\Upsilon(\rho)\]
by definition of $\Upsilon^\ell_n$, and by the definition of $\Upsilon:\Uu\to\R_+^\N$ in Section \ref{Conv:sec:dist-dec}. Hence, for every $\chi\in\hat\U$,
\[\lim_{\ell\to\infty}\dP(\Upsilon^\ell_n\circ\alpha(\nu^\chi),
\gamma_n\circ\Upsilon\circ\alpha(\nu^\chi))=0.\]
By \cite{Sampl}*{Proposition 3.3},
\[\Upsilon\circ\alpha(\nu^\chi)=\varpi_{\R_+^\N}(\nu^\chi).\]
Therefore, expression~\reff{Conv:eq:mmm-bound-dP-ln} converges to zero as $\ell$ tends to infinity. Now we let also $n$ tend to infinity. Then we can deduce that
\[\dP(\nu^{\hat\chi^N_{\fl{c_N^{-1}t}}},
\nu^{\tilde\chi^N_{\fl{c_N^{-1}t}}})\]
converges to zero in probability as $N$ tends to infinity.

As an immediate consequence, for $t_1,\ldots,t_k\in\R_+$, the vector
\[(\dP(\nu^{\hat\chi^N_{\fl{c_N^{-1}t_i}}},
\nu^{\tilde\chi^N_{\fl{c_N^{-1}t_i}}}),i\in\sk)\]
converges to zero in probability as $N$ tends to infinity. Using Proposition~\ref{Conv:prop:mod} again, Slutzky's theorem, and that $(\hat\chi_s,s\in\R_s)$ has a.\,s.\ no fixed times of discontinuity, we deduce the convergence in distribution
\[(\nu^{\hat\chi^N_{\fl{c_N^{-1}t_i}}},i\in\sk)\dto
(\nu^{\hat\chi_{t_i}},i\in\sk)\quad(N\to\infty).\]
Hence,
\[(\hat\chi^N_{\fl{c_N^{-1}t_i}},i\in\sk)\dto
(\hat\chi_{t_i},i\in\sk)\quad(N\to\infty)\]
in the marked Gromov-weak topology. As this convergence determines the finite-dimensional distributions of possible limit processes, it now suffices to show relative compactness of the sequence of processes $((\hat\chi^N_{\fl{c_N^{-1}t}},t\in\R_+),N\in\N)$, see \cite{EK86}*{Theorem 3.7.8}.
\end{proof}
To show the desired relative compactness in the proof of Theorem \ref{Conv:thm:dust}, we use the following lemma.
\begin{lem}
\label{Conv:lem:bound-MGP-rc}
Let $k\in\N_0$ and $N\geq 2$.
Then,
\[\dMGP(\hat\chi^N_k,\hat\chi^N_{k+1})\leq 2N^{-1}(N-\#\pi^N_{k+1})+c_N.\]
\end{lem}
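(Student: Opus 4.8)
The plan is to use the relational description of $\dMGP$ from Proposition~\ref{Conv:prop:MGP-rel} twice, after first peeling off the contribution of one generation's worth of growth. Write $a=A_k(k+1,\cdot)\colon[N]\to[N]$ for the ancestor map of the population model, so that its image $S=a([N])$ has $\#S=\#\pi^N_{k+1}$, and put $m=N-\#\pi^N_{k+1}=\sum_{B\in\pi^N_{k+1}}(\#B-1)$. By the definition of $\rho^N_\ell$ in Section~\ref{Conv:sec:Cannings} one has $\rho^N_{k+1}=\tilde\rho+2c_N\underline{\underline2}_N$, where $\tilde\rho(i,j)=\rho^N_k(a(i),a(j))$ for $i\neq j$ and $\tilde\rho\in\Uu_N$ since $\rho^N_k$ is a semi-ultrametric. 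Adding $2c_N$ to all off-diagonal entries raises every external branch length by $c_N$ and leaves the inner distances unchanged, so $\beta(\rho^N_{k+1})$ differs from $\beta(\tilde\rho)$ only by a shift of the mark coordinate by $c_N$; applying Proposition~\ref{Conv:prop:MGP-rel} with the identity relation and the identity coupling gives $\dMGP(\hat\chi^N_{k+1},\hat\psi_N(\beta(\tilde\rho)))\leq c_N$. By the triangle inequality it therefore suffices to prove $\dMGP(\hat\psi_N(\beta(\tilde\rho)),\hat\chi^N_k)\leq 2m/N$.

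For this I would apply Proposition~\ref{Conv:prop:MGP-rel} to the (semi-metric) representatives $([N],\bar r,N^{-1}\sum_i\delta_{(i,\bar v(i))})$ of $\hat\psi_N(\beta(\tilde\rho))$ and $([N],\bar r',N^{-1}\sum_i\delta_{(i,\bar v'(i))})$ of $\hat\chi^N_k$, where $(\bar r,\bar v)=\beta(\tilde\rho)$ and $(\bar r',\bar v')=\beta(\rho^N_k)$, so $\bar v'=\Upsilon(\rho^N_k)$. If $i$ lies in a singleton block of $\pi^N_{k+1}$ then $\bar v(i)=\tfrac12\min_{s\in S\setminus\{a(i)\}}\rho^N_k(a(i),s)\geq\tfrac12\min_{s\in[N]\setminus\{a(i)\}}\rho^N_k(a(i),s)=\bar v'(a(i))$, with equality exactly when some nearest neighbour of $a(i)$ in $\rho^N_k$ again reproduces; and whenever $i,j$ are two such individuals realising equality one has $\bar r(i,j)=\rho^N_k(a(i),a(j))-\bar v(i)-\bar v(j)=\bar r'(a(i),a(j))$. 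Thus, matching each singleton-block individual $i$ to $a(i)$ when $a(i)$ has a reproducing nearest neighbour, and to a carefully chosen other leaf of generation $k$ otherwise (see below), one gets a relation $\mathfrak R\subset[N]\times[N]$ consisting of the $b_1$ pairs coming from singleton blocks, where $b_1$ is the number of singleton blocks of $\pi^N_{k+1}$; this $\mathfrak R$ has small distortion, and the coupling that uses these $b_1$ pairs (the matching being injective here) together with an arbitrary pairing of the remaining $N-b_1$ atoms of each measure puts mass $\geq b_1/N$ on $\hat{\mathfrak R}$, so Proposition~\ref{Conv:prop:MGP-rel} yields $\dMGP(\hat\psi_N(\beta(\tilde\rho)),\hat\chi^N_k)\leq (N-b_1)/N$.

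The count then closes cleanly: $N-b_1$ is the number of individuals in non-singleton blocks, which equals $m$ plus the number of non-singleton blocks, and since every non-singleton block $B$ contributes $\#B-1\geq 1$ to $m$, the number of non-singleton blocks is at most $m$; hence $N-b_1\leq 2m$, which combined with the first paragraph gives the lemma. The delicate ingredient is the construction of $\mathfrak R$ for the \emph{freed} singleton-block individuals, i.e.\ those whose ancestor's only nearest neighbours in $\rho^N_k$ are non-reproducing, which is the source of the branch-lengthening (freeing) phenomenon discussed in Section~\ref{Conv:sec:mod}: for such $i$ one cannot take $a(i)$ (its mark $\bar v(i)$ then strictly exceeds $\bar v'(a(i))$, which would inflate both the distortion of $\mathfrak R$ and the discrepancy in the mark coordinate), so one must instead attach $i$ to a leaf of generation $k$ whose external branch realises $\bar v(i)$ at the same inner position.

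I expect this last point to be the main obstacle. Producing the alternative matching for the freed lineages — injectively, and with the distortion of $\mathfrak R$ and the discrepancy in the mark coordinate both kept within the required $O(m/N)$ — requires genuinely using that $\rho^N_k$ is ultrametric, so that pruning the $m$ non-reproducing leaves can only lengthen external branches in a controlled, tree-localised way, and so that a leaf of the right external-branch length sits in the right place relative to the retained leaves; a naive accounting that simply discards the disturbed and the freed lineages loses up to $3m$ rather than $2m$ and is not good enough.
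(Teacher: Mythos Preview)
Your proposal has a genuine gap: you never construct the alternative matching for the freed lineages, and you say so yourself. Everything up to that point --- peeling off $c_N$, the singleton-block count $N-b_1\le 2m$, the use of Proposition~\ref{Conv:prop:MGP-rel} --- is sound, but without that matching the argument does not close.

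The paper does not attempt your rematching at all. It works on the generation-$k$ side, sets $M=\{i\in[N]:C_i\cap L\neq\emptyset$ and $i$ has exactly one offspring$\}$ (with $C_i$ the nearest-neighbour set of $i$ in $\rho^N_k$), and uses the relation $\{(i,d(i)):i\in M\}$; for these pairs the inner distances agree exactly and the marks differ by precisely $c_N$, so Proposition~\ref{Conv:prop:MGP-rel} gives $\dMGP\le(N-\#M)/N+c_N$ in one step, with no triangle-inequality split. The ultrametric fact you were reaching for then enters only as a \emph{counting} device: if $i\neq j$ both lie in $L$ with $C_i,C_j\subset L^c$, the strong triangle inequality forces $C_i\cap C_j=\emptyset$, so there are at most $\#L^c=N-\#L$ freed reproducers. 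That is the missing idea --- discard the freed lineages rather than rematch them, and bound their number via disjointness of their minimal clades inside $L^c$. Your instinct that naive discarding costs $3m$ rather than $2m$ is, however, correct: the paper's final tally sums only the freed reproducers and the multi-offspring individuals (each $\le N-\#L$) and omits the $N-\#L$ non-reproducers that also lie in $M^c$; taking $N=4$, one doubleton block, and the non-reproducer sitting in a two-element minimal clade gives $\#M=1$, so $N-\#M=3>2=2(N-\#L)$. Since the lemma is used only to verify relative compactness in Section~\ref{Conv:sec:mmm-dust}, the constant is immaterial and the discard-and-count route with constant $3$ already suffices; obtaining the stated constant $2$ would indeed require something like the rematching you sketch but do not carry out.
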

The bound in Lemma \ref{Conv:lem:bound-MGP-rc} has the following meaning. There are at most $N-\#\pi^N_{k+1}$ many individuals in generation $k$ that have more than one offspring in generation $k+1$. For each such offspring, the associated external branch has length $c_N$ which needs not coincide with the external branch length of the ancestor in generation $k$. There are also $N-\#\pi^N_{k+1}$ individuals in generation $k$ that die, and each such death can drastically increase an external branch length in generation $k+1$ (this is the freeing phenomenon mentioned in Section \ref{Conv:sec:mod}). For the other individuals in generation $k$, the external branch lengths increase by $c_N$ from generation $k$ to $k+1$. As each individual has weight $N^{-1}$, the bound is a consequence of the definition of the marked Gromov-Prohorov metric and the coupling characterization of the Prohorov metric.
\begin{proof}
Let $L\subset\sN$ denote the set of the labels of the individuals of generation $k$ that have offspring in generation $k+1$, that is,
\[L=\{i\in\sN:\exists j\in\sN\text{ with }A_k(k+1,j)=i\}.\]
By definition of the population model in Section~\ref{Conv:sec:results},
\begin{equation}
\label{Conv:eq:K-pi}
\#L=\#\pi^N_{k+1}.
\end{equation}
For all $j_1,j_2\in\sN$ with $j_1\neq j_2$ and $i_1=A_k(k+1,j_1)$, $i_2=A_k(k+1,j_2)$, by definition of the population model in Section~\ref{Conv:sec:results},
\begin{equation}
\label{Conv:eq:bound-MGP-rhok}
\rho^N_{k+1}(j_1,j_2)=\rho^N_k(i_1,i_2)+2c_N.
\end{equation}
For $i\in\sN$, we define the set
\[C_i=\{j\in\sN\setminus\{i\}:\rho^N_k(i,j)=\min\{\rho^N_k(i,\ell):\ell\in\sN\setminus\{i\}\}\}.\]
In words, $C_i$ consists of the individuals other than $i$ with minimal distance to the individual $i$. That is, the set $C_i\cup\{i\}$ is the minimal clade of the individual $i$ in the sense of \cite{BF05}.
Moreover, we define
\[M=\{i\in\sN:C_i\cap L\neq\emptyset,\exists ! j\in\sN \text{ with }A_k(k+1,j)=i\}.\]
For $i\in M$, the individual $i$ of generation $k$ has exactly one offspring $j$ in generation $k+1$, and at least one other member of the minimal clade of $i$ has offspring in generation $k+1$. Hence, the minimal clade of $i$ in generation $k$ and the minimal clade of $j$ in generation $k+1$ have the same most recent common ancestor. This implies, for $i$ and $j$ as above,
\begin{equation}
\label{Conv:eq:F-clade}
\min_{\ell\in\sN\setminus\{j\}}\rho^N_{k+1}(j,\ell)
=\min_{\ell\in\sN\setminus\{i\}}\rho^N_{k}(i,\ell)+2c_N.
\end{equation}

We write $(r,v)=\beta(\rho^N_k)$ and $(r',v')=\beta(\rho^N_{k+1})$. For $i\in M$, let $d(i)$ denote the label of the unique descendant in generation $k+1$ of the individual $i$ of generation $k$. For all $i\in M$ and $j=d(i)$,
\begin{equation}
\label{Conv:eq:bound-freeing-v}
v'(j)=\tfrac{1}{2}\min_{\ell\in\sN\setminus\{j\}}\rho^N_{k+1}(j,\ell)
=\tfrac{1}{2}\min_{\ell\in\sN\setminus\{i\}}\rho^N_{k}(i,\ell)+c_N=v(i)+c_N
\end{equation}
by equation~\reff{Conv:eq:F-clade}.
For $i_1,i_2\in M$ with $i_1\neq i_2$ and $j_1=d(i_1)$, $j_2=d(i_2)$, it holds $j_1\neq j_2$, and by equations \reff{Conv:eq:bound-MGP-rhok} and \reff{Conv:eq:bound-freeing-v}
\begin{equation}
\label{Conv:eq:bound-freeing-r-rho-v}
r'(j_1,j_2)=\rho^N_{k+1}(j_1,j_2)-v'(j_1)-v'(j_2)
=\rho^N_{k}(i_1,i_2)-v(i_1)-v(i_2)=r(i_1,i_2).
\end{equation}

We define a relation $\fR$ between the semi-metric spaces $(\sN,r)$ and $(\sN,r')$ by
\[\fR=\{(i,d(i))\in\sN^2:i\in M\}.\]
Equation~\reff{Conv:eq:bound-freeing-r-rho-v} implies that the distortion of $\fR$ equals zero,
\[\dis\fR=\max\{|r(i_1,i_2)-r'(j_1,j_2)|:(i_1,j_1),(i_2,j_2)\in\fR\}=0.\]
We set
\[\hat\fR=\{((i,v(i)),(j,v'(j)))\in(\sN\times\R_+)^2:(i,j)\in\fR\}.\]
There exists a coupling $\nu$ of the probability measures $N^{-1}\sum_{i=1}^N\delta_{(i,v(i))}$ and $N^{-1}\sum_{j=1}^N\delta_{(j,v'(j))}$ on $\sN\times\R_+$ with
\[\nu(\hat\fR)\geq N^{-1}\#M=1-N^{-1}(N-\#M).\]
By equation~\reff{Conv:eq:bound-freeing-v}, it holds $|v(i)-v'(j)|\leq c_N$ for $(i,j)\in\fR$. By Proposition~\ref{Conv:prop:MGP-rel}, which also holds for marked semi-metric measure spaces, it follows
\[\dMGP(\hat\chi^N_k,\hat\chi^N_{k+1})\leq N^{-1}(N-\#M)+c_N.\]

It remains to show
\begin{equation}
\label{Conv:claim:bound-clade}
N-\#M\leq 2(N-\#L).
\end{equation}
The assertion then follows by equation~\reff{Conv:eq:K-pi}.

For $i\in\sN$, let $I_i=\I{i\in L,C_i\subset L^c}$. (Then $I_i$ is the indicator variable that individual $i$ reproduces as the only individual of its minimal clade.)
Let $i,j\in\sN$ with $i\neq j$ and consider the case that there exists $\ell\in C_i\cap C_j$. W.\,l.\,o.\,g., we assume $\rho^N_k(j,\ell)\leq\rho^N_k(i,\ell)$ (if this does not hold, we transpose $i$ and $j$). As $\rho^N_k\in\Uu$, we obtain
\[\rho^N_k(i,j)\leq\rho^N_k(i,\ell)\vee\rho^N_k(j,\ell)=\rho^N_k(i,\ell).\]
As $\ell\in C_i$, it follows $j\in C_i$. If $I_i=1$, then it follows that $j\in L^c$ and $I_j=0$. Hence, in any case, the elements of the set $\mathcal{A}:=\{C_i:i\in\sN,I_i=1\}$ are nonempty disjoint subsets of $L^c$, or it holds $\mathcal{A}=\emptyset$. This implies $\#\{C_i:i\in\sN,I_i=1\}\leq N-\#L$. Furthermore, generation $k$ contains at most $N-\#L$ many individuals with more than one offspring in generation $k+1$. The claim~\reff{Conv:claim:bound-clade} follows by definition of $M$.
\end{proof}

\begin{proof}[Proof of Theorem~\ref{Conv:thm:dust} (end)]
We assume $\Xi\in\dust$. To show relative compactness of the sequence of processes $((\hat\chi^N_{\fl{c_N^{-1}t}},t\in\R_+),N\in\N)$,
it suffices to verify condition (b) in Theorem 3.8.6 of \cite{EK86}. Condition (a) in this theorem is satisfied as the one-dimensional distributions converge.

Using Lemma~\ref{Conv:lem:bound-MGP-rc} and the
bound
\[N-\#\pi\leq\#\cup\sigma\]
for $\pi\in\p_N$ and $\sigma=\{B\in\pi:\# B\geq 2\}$, we obtain
\[\E[\dMGP(\hat\chi^N_k,\hat\chi^N_{k+1})]\leq 2N^{-1}\E[\#\cup\sigma^N_1]+c_N\]
for all $k\in\N_0$. By exchangeability,
\[\E[\#\cup\sigma^N_1]=\sum_{i=1}^N\E[\I{i\in\cup\sigma^N_1}]
=N\P(\gamma_1(\sigma^N_1)=\{\{1\}\}).\]
Let $(\F^N_t,t\in\R_+)$ be the filtration induced by the process $(\hat\chi^N_{\fl{c_N^{-1}t}},t\in\R_+)$. For $t\in\R_+$, $\delta>0$, $u\in[0,\delta]$, and $s\in[0,\delta\wedge t]$, the Markov property of $(\hat\chi^N_k,k\in\N_0)$ at $\fl{c_N^{-1}t}$ yields
\begin{align}
&\E[\dMGP(\hat\chi^N_{\fl{c_N^{-1}(t+u)}},\hat\chi^N_{\fl{c_N^{-1}t}})|\F^N_t]
\dMGP(\hat\chi^N_{\fl{c_N^{-1}(t-s)}},\hat\chi^N_{\fl{c_N^{-1}t}})\notag\\
&\leq \I{\delta\geq c_N/2}(\fl{c_N^{-1}\delta}+1)(2N^{-1}\E[\#\cup\sigma^N_1]+c_N)\notag\\
&\leq \I{\delta\geq c_N/2}(\delta+c_N)(2c_N^{-1}\P(\gamma_1(\sigma^N_1)=\{\{1\}\})+1)
\quad\text{a.\,s.}
\label{Conv:eq:rc-mmm-bound-Markov}
\end{align}
In the first inequality, we also use $\dMGP\leq 1$, and that if $\delta< c_N/2$, then at least one of the distances on the left-hand side of~\reff{Conv:eq:rc-mmm-bound-Markov} equals zero.

Now we show that the right-hand side of~\reff{Conv:eq:rc-mmm-bound-Markov} converges to zero uniformly in $N$ as $\delta$ tends to zero. For each $\ep>0$, there exists $N_\ep\geq 2$ such that for all $N\geq N_\ep$, it holds
\[c_N^{-1}\P(\gamma_1(\sigma^N_1)=\{\{1\}\})\leq 2\lambda_{1,\{\{1\}\}}\]
and $c_N<\ep$. Hence the right-hand side of~\reff{Conv:eq:rc-mmm-bound-Markov} is bounded from above by $(\delta+\ep)(4\lambda_{1,\{\{1\}\}}+1)$ for $N\geq N_\ep$. For $\delta$ sufficiently small and $N\leq N_\ep$, the right-hand side of~\reff{Conv:eq:rc-mmm-bound-Markov} equals zero.

As the right-hand side of~\reff{Conv:eq:rc-mmm-bound-Markov} does not depend on $t$, we have verified (8.28) and (8.29) in Theorem 3.8.6 of \cite{EK86}. To verify also (8.30), hence condition (b) in Theorem 3.8.6 of \cite{EK86}, we estimate as above
\[\E[\dMGP(\hat\chi^N_{\fl{c_N^{-1}\delta}},\hat\chi^N_0)]
\leq\fl{c_N^{-1}\delta}(2N^{-1}\E[\#\cup\sigma^N_1]+c_N)
\leq\delta(2c_N^{-1}\P(\gamma_1(\sigma^N_1)=\{\{1\}\})+1).\]
Also this expression converges to zero uniformly in $N$ as $\delta$ tends to zero.
\end{proof}

\section*{List of notation}
\sectionmark{List of notation}
Here we collect notation that is used globally in the article.\\

\small{
\hparagraph{Miscellaneous}
$\R_+=[0,\infty)$, $\N=\{1,2,3,\ldots\}$, $\N_0=\N\cup\{0\}$, $[N]=\{1,\ldots,n\}$ for $N\in\N$
\\
$\gamma_n$: restriction map in various contexts (p.\,\pageref{Conv:not:gamma:dm}, p.\,\pageref{Conv:not:gamma:part}, p.\,\pageref{Conv:not:gamma:R})\\
$\dP$: Prohorov metric\\

\hparagraph{(Marked) distance matrices}
$\Uu_N$, $\Uu$: space of semi-ultrametrics on $[N]$, on $\N$ (p.\,\pageref{Conv:not:Uu})\\
$\hat\Uu_N$, $\hat\Uu$: space of decomposed semi-ultrametrics on $[N]$, on $\N$ (p.\,\pageref{Conv:not:hatUuN}, p.\,\pageref{Conv:not:hatUu})\\
$\alpha$: retrieves the semi-ultrametric from a decomposed semi-ultrametric (p.\,\pageref{Conv:not:alphaN}, p.\,\pageref{Conv:not:alpha})\\
$\beta:\Uu_N\to\hat\Uu_N$: decomposition map into the external branches and the remaining subtree (p.\,\pageref{Conv:not:beta})\\
$\Upsilon(\rho)$: vector of the lengths of the external branches in the coalescent tree associated with $\rho$ (p.\,\pageref{Conv:not:UpsilonN}, p.\,\pageref{Conv:not:Upsilon})\\

\hparagraph{(Marked) metric measure spaces}
$\U_N$, $\U$: spaces of isomorphy classes of ultrametric measure spaces (p.\,\pageref{Conv:not:UN})\\
$\hat\U_N$, $\hat\U$: spaces of isomorphy classes of marked metric measure spaces (p.\,\pageref{Conv:not:hatUN})\\
$\dGP$, $\dMGP$: Gromov-Prohorov metric, marked Gromov-Prohorov metric (p.\,\pageref{Conv:not:dGP}, p.\,\pageref{Conv:not:dMGP})\\
$\nu^\chi$: distance matrix distribution of $\chi\in\U$, or marked distance matrix distribution of $\chi\in\hat\U$ (p.\,\pageref{Conv:not:nuchimm}, p.\,\pageref{Conv:not:nuchimmm})\\
$\nu^{N,\chi}$: $N$-distance matrix distribution of $\chi\in\U$, or $N$-marked distance matrix distribution of $\chi\in\hat\U$ (p.\,\pageref{Conv:not:nuNchimm}, p.\,\pageref{Conv:not:nuNchimmm})\\
$\UUerg$: space of distance matrix distributions (p.\,\pageref{Conv:not:UUerg})\\
$\psi_N:\Uu_N\to\U_N$, $\hat\psi_N:\hat\Uu_N\to\hat\U_N$: construction of (marked) metric measure spaces (p.\,\pageref{Conv:not:psi}, p.\,\pageref{Conv:not:hatpsi})\\
$\alpha:\hat\U_N\to\U_N$: maps a decomposed unlabeled tree to an unlabeled tree (p.\,\pageref{Conv:not:alphamm})\\
$\beta:\U_N\to\hat\U_N$: decomposes an unlabeled tree at the external branches (p.\,\pageref{Conv:not:betamm})\\
$\beta_0:\U_N\to\hat\U_N$: adds the zero mark (p.\,\pageref{Conv:not:beta0})\\
$\C_n$, $\hat\C_n$: sets of bounded differentiable functions with bounded uniformly continuous derivative (p.\,\pageref{Conv:not:Cn})\\
$\Pi$: set of polynomials on $\U$ (p.\,\pageref{Conv:not:Pi})\\
$\hat\Pi$: set of marked polynomials on $\hat\U$ (p.\,\pageref{Conv:not:hatPi})\\
$\Cc$: a set of test functions on $\UUerg$ (p.\,\pageref{Conv:not:Cc})\\

\hparagraph{Partitions and semi-partitions}
$\p_N$: Set of partitions of $[N]$, associated transformations (equation \reff{Conv:eq:pn-Un})\\
$\mathbf{0}_n=\{\{1\},\ldots,\{n\}\}\in\p_n$\\
$\#\pi$: number of blocks of a partition $\pi$\\
$\S_n$ set of semi-partitions of $[n]$ (p.\,\pageref{Conv:not:Sn}), associated transformations (p.\,\pageref{Conv:not:Sn-transf})\\
$\Delta=\{x=(x(1),x(2),\ldots):x(1)\geq x(2)\geq \ldots 0,|x|_1\leq 1\}$\\
$\Delta^N=\{x\in\Delta:|x|_1=1,Nx(i)\in\N_0\text{ for all }i\in\N\}$\\
$\Delta_c=\{x\in\Delta:x(1)>c\}$\\

\hparagraph{Genealogy in the Cannings model}
$(x^N_k,k\in\N)$: sequence in $\Delta^N$ that gives the family sizes (p.\,\pageref{Conv:not:xNk})\\
$(\pi^N_k,k\in\N)$ sequence in $\p_N$ that gives the families (p.\,\pageref{Conv:not:piNk})\\
$A_j(k,i)$: label of the ancestor in generation $j$ of the individual $i$ in generation $k$ (p.\,\pageref{Conv:not:anc})\\
$\rho^N_k(i,j)$: genealogical distance (p.\,\pageref{Conv:not:rhoNk})\\
$c_N$: pairwise coalescence probability (equation \reff{Conv:eq:cN})\\
$b_N$: probability that a randomly sampled individual is in a non-singleton family (p.\,\pageref{Conv:not:bN})\\
$\chi^N_k=\psi_N(\rho^N_k)$: unlabeled genealogical tree (equation \reff{Conv:eq:rho-chi})\\
$\hat\chi^N_k=\hat\psi_N(\beta(\rho^N_k))$: unlabeled genealogical trees, decomposed at the external branches (p.\,\pageref{Conv:not:hatchiNk})\\
$(r^N_k,v^N_k)$, $\tilde\chi^N_k=\hat\psi_N(r^N_k,v^N_k)$: another decomposition of the genealogical trees (p.\,\pageref{Conv:not:rvNk}, p.\,\pageref{Conv:not:tildechiNk})\\

\hparagraph{Tree-valued Fleming-Viot processes}
${\Mm_1(\Delta)}$, $\dust$, $\nd$: Set of probability measures on $\Delta$, subsets of the measures with and without dust (p.\,\pageref{Conv:not:MfD})\\
$\Xi=\Xi_0+\Xi\{0\}\delta_0$ (equation \reff{Conv:eq:Xi-dec})\\
$\lambda_{\pi}$, $\lambda_{n,\sigma}$: reproduction rates (p.\,\pageref{Conv:not:lambdapi}, p.\,\pageref{Conv:not:lambdansigma})\\
}

\normalsize
\noindent{ \textbf{Acknowledgments.}
This work is part of my PhD thesis. Partial support from the DFG Priority Programme 1590 ``Probabilistic Structures in Evolution'' is acknowledged.

\bibliography{C:/Users/sg/Documents/Bib/diss}
\bibliographystyle{plain}

\end{document}